\documentclass[12 pt]{article}
 \usepackage[backref,colorlinks,linkcolor=red,anchorcolor=green,citecolor=blue]{hyperref}
\usepackage{amsfonts,amssymb}
\usepackage{amsmath}
\usepackage{amssymb}
\usepackage{amsthm}
\usepackage{graphicx}
\usepackage{cite}
\usepackage{verbatim}
\usepackage{enumerate}
\DeclareMathOperator*{\argmin}{arg\,min}
\usepackage{bm}
\providecommand{\keywords}[1]{\textbf{\textit{Keywords---}} #1}

\newtheorem{theorem}{Theorem}
  \newtheorem{lemma}[theorem]{Lemma}
  \newtheorem{corollary}[theorem]{Corollary}
  
  \newtheorem{definition}[theorem]{Definition}
   \newtheorem{thm}{\noindent Theorem}[section]
  \newtheorem{remark}[theorem]{Remark}
  \newtheorem{assumption}[thm]{Assumption}

\usepackage[margin = 1in]{geometry}
\begin{document}
 \title{\sc{Parametric holomorphy of elliptic eigenvalue problems}
}

          %For each author, make a block with the following macros:

          \author{Byeong-Ho Bahn\thanks{University of Massachusetts Amherst, (bban@umass.edu).}
         }

         \pagestyle{myheadings} \markboth{$(\bm{b},\varepsilon)-$Holomorphy of EVPs}{Byeong-Ho Bahn} \maketitle

          \begin{abstract}
          The study of parameter-dependent partial differential equations (parametric PDEs) with countably many parameters has been actively studied for the last few decades. In particular, it has been well known that a certain type of parametric holomorphy of the PDE solutions allows the application of deep neural networks without encountering the curse of dimensionality. This paper aims to propose a general framework for verifying the desired parametric holomorphy by utilizing the bounds on parametric derivatives. The framework is illustrated with examples of parametric elliptic eigenvalue problems (EVPs), encompassing both linear and semilinear cases. As the results, it will be shown that the ground eigenpairs have the desired holomorphy. Furthermore, under the same conditions, the bounds for the mixed derivatives of the ground eigenpairs are derived. These bounds are well known to take a crucial role in the error analysis of quasi-Monte Carlo methods. 
\end{abstract}

\keywords
{Parametric partial differential equations, Elliptic eigenvalue problems,  Parametric holomorphy, Generalized polynomial chaos expansion, Curse of dimensionality}

\section{Introduction}
\label{intro}
\subsection{Motivation and the goal}
Parametric partial differential equations (PDEs) have been studied tremendously throughout the last few decades. In this paper, parametric PDEs of interest, \eqref{parapdeex}, have their coefficient functions, such as the potential function of Schr\"odinger equation, affinely depending on countably many real-valued parameters as the form of \eqref{paradep}. Because of the parametric dependence of the coefficient functions, the solution depends on the parameters as well. One of the main studies regarding this kind of PDEs has been simultaneously solving it instead of solving it separately for different parameters. Such attempts can be found, for example, in \cite{schbest, para3, semi2013, Cohen2015, para7, sch4}. One of the known sufficient conditions needed to allow the simultaneously solving PDEs is a type of holomorphic dependence of the solution on the parameters (parametric holomorphy) called $(\bm{b},\varepsilon)-$ holomorphy (Definition \ref{behol}). The study of $(\bm{b},\varepsilon)$-holomorphy appears in many places, for example, in \cite{mishra, zech, para3, Cohen2015, deepuq, schbest, semi2013}.

 The holomorphy of PDE solutions guarantees various numerical applications to the PDE. One of the important properties of $(\bm{b},\varepsilon)$-holomorphy is that it allows simultaneously solving PDE problems in terms of approximation. It has been studied actively under the name of best $N$-term approximation. In \cite{para3}, more general frameworks for studying best $N$-term approximations are provided using $(\bm{b},\varepsilon)$-holomorphy (defined as holomorphic assumption \textbf{HA}$(p,\varepsilon)$ in Definition 2.1 in \cite{para3}). One of the well-known results from the paper is that any $(\bm{b},\varepsilon)-$holomophic function with $\bm{b}\in \ell^p$ and $p\in (0,1)$ admits best $N$-term approximations of the function. According to the result, it is possible to universally approximate the solution to a PDE with the error rate $N^{\frac{1}{p}-1}$ where $N$ is the number of polynomial terms used to approximate. In other words, it can be said that any $(\bm{b},\varepsilon)-$holomorphic function admits so-called Taylor generalized polynomial chaos (gpc) expansion (Proposition 2.3 in \cite{deepuq}). With this result, instead of solving the PDE with different parameters several times, it is possible to find a universal approximation and plug the different parameters to get the solutions, and it can be seen as solving the PDE simultaneously. A remarkable point to observe is that, even if the infinite number of parameters, it still has the convergent error rate which overcomes the curse of dimensionality.  
 
 The fact that any $(\bm{b},\varepsilon)$-holomorphic mapping admits the universal approximation has been used for proving the theoretical guarantees of various numerical applications. One of the notable results is the application of deep neural networks (DNNs). With the result of best $N$-term approximation, recently, \cite{zech} showed that any $(\bm{b},\varepsilon)$-holomorphic function can be expressed by DNNs without the curse of dimensionality. Furthermore, the results were applied to the Bayesian PDE inversion problem in \cite{zech}.
   
  The expressive power of DNNs for $(\bm{b},\varepsilon)$-holomorphic function has been used in  \cite{mishra} to show that the holomorphy property of the PDE solutions allows Deep Operator Neural Networks (DeepOnets) to solve the corresponding class of PDEs. Specifically, a DeepOnet consists of three essential parts: encoder, DNNs, and decoder. DeepOnet accepts infinite dimensional input, which can be coefficient functions or a domain of PDE. Then, the encoder transforms this infinite-dimensional input into a finite-dimensional object. Then, by using DNNs, it approximates the solution in finite dimensional space. Then, the decoder transforms the finite-dimensional object into an infinite-dimensional object, which would be the solution to the PDE. The result of dimension-independent expressive power in \cite{zech} is used in the DNN part of DeepOnet in \cite{mishra}. So far, various PDE problems have been shown to have $(\bm{b},\varepsilon)-$holomorphy. Some of the famous examples are presented in the section 4 of \cite{mishra}.
  
 The parametric holomorphy of PDE solutions also appears in the study of quasi-Monte Carlo (QMC) methods, for example, in \cite{quasi, dick, para5, alex2019, alex20201, para6, para8, sch1, para9}. In this area, estimating the bound for mixed derivatives is one of the important tasks to theoretically guarantee the convergent error rate. In recent years, parametric EVPs have been studied actively in \cite{andreev2010, kien2022, alex2019, alexey} and semilinear EVPs in \cite{Bahn2024} and obtained well enough bound to guarantee the QMC error convergence rates. One of the main questions in this paper is if the bound for mixed derivatives can be used to show the eigenpairs are $(\bm{b},\varepsilon)$-holomorphic. As an interesting result, the bounds for mixed derivatives can be recovered in this paper avoiding dealing with multi-index techniques.

 The main difficulty of showing the holomorphy of the ground eigenpairs of EVPs compared to other PDEs is that the definition of ground eigenvalue relies on the ordering of real number system. In other words, because complex number system does not have an ordering, the meaning of the smallest complex eigenvalue has not been well defined. At the same time, the first and the second smallest eigenvalues can cross in the complex plane which might cause non-holomorphy at the crossing point. In \cite{andreev2010}, the domain of holomorphy of ground eigenpair was restricted to the region where the two eigenvalues do not cross. Another difficulty is that some important properties, such as upper bound or parametric regularity of eigenvalue, strongly depend on the corresponding  ground state. Thus, knowing the information of one of them requires the information of another, so separate treatments for eigenvalue and ground state do not work well. In order to overcome these difficulties, this paper will adopt Banach space version of the implicit function theorem. This theorem makes it possible to track the ground eigenpair when it is perturbed from real space to complex space. For example, this technique has been used in \cite{Bahn2024}. Also, general discussion on the regularity of implicit map with the bound of mixed derivatives has been discussed in \cite{implicit2024}. With this analysis, in this paper, the desired region of holomorphy will be obtained without considering the crossing issue. Furthermore, it enables the simultaneous treatment of eigenvalue and ground state, resolving the issue.

 \subsection{Contribution}
 The contribution of this paper is twofold. Firstly, this paper introduces a concrete and general way to check $(\bm{b},\varepsilon)-$holomorphy of the solutions to parametric PDEs. It should be emphasized that the upper bound estimation of single variable derivatives (for example, can be obtained from the bounds in \cite{alexey, alex2019, Bahn2024}, but with a simpler form) is used with some additional conditions to obtain the holomorphy. As will be shown, this result implies that the ground eigenpairs of parametric EVPs, such as the linear case in \cite{alexey} including the semilinear case in \cite{Bahn2024}, are $(\bm{b},\varepsilon)-$holomorphic. This will provide the theoretical guarantee of dimension-independent DNN application to EVPs.
 
 The $(\bm{b},\varepsilon)$-holomorphy of the ground eigenpair of linear EVPs has been stated in Section 2.4.2 and Section 5.2.3 of \cite{zech}. In those sections, the holomorphy shown in Theorem 4 and Corollary 2 of \cite{andreev2010} has been used. In that case, the parametric coefficient enters only to diffusion coefficient (the function $A$ in \eqref{problem1}). The analysis used in \cite{andreev2010} mainly follows from the perturbation theory of Kato in \cite{kato}, focusing on the domain where the first and the second eigenvalues do not cross. This current paper suggests different analysis in more general settings, allowing the coefficient functions $B$ and $C$ in \eqref{problem1} also depend on the parameters. To be specific, this paper takes into account all three possible parametric coefficient functions of linear elliptic EVPs. This enables DeepOnet to have three coefficient input functions. In addition, the analysis in this paper makes the argument simpler, not taking into account the crossing of the eigenvalues, broadening the domain of holomorphy.

 Secondly, under the very same condition in which the first result assumes, the upper bound estimations of the mixed derivatives of ground eigenpairs obtained in \cite{alexey, Bahn2024, alex2019} are recovered without dealing with multi-index techniques. For example, in \cite{alexey}, the bound in \cite{alex2019} which has the form of $|(\bm{\nu}|!)^{1+\varepsilon}\bm{\beta}^{\bm{\nu}}$ was improved to $(|\bm{\nu}|!)\bm{\beta}$ by using the tool of falling factorial. Using same techniques with \cite{alexey}, the bound of the form of $(|\bm{\nu}|!)\bm{\beta}^{\bm{\nu}}$ of ground eigenpair of semilinear EVP was obtained in \cite{Bahn2024}. In this paper, a slightly weaker version of the aforementioned bounds are obtained with simpler computation.

\subsection{Organization}
In section 2, the definitions and notations related to $(\bm{b},\varepsilon)-$holmorphy will be provided. In section 3, the main result, a general framework to check $(\bm{b},\varepsilon)-$holomorphy, will be introduced. Also, the bound of mixed derivatives for the functions will be illustrated. Lastly, in section 4, utilizing the main result obtained in section 3, the $(\bm{b},\varepsilon)-$holomorphy of the ground eigenpairs of linear and semilinear EVPs will be presented with the bound for the mixed derivatives.  

\subsection{Notation}
\label{notation} Let $U:=[-1,1]^{\mathbb{N}}$ is the space of parameters where $\mathbb{N}$ is the set of natural numbers without $0$.  Throughout this paper, the dependence on the stochastic parameter $\bm{y}\in U$ has more importance than that of the spatial variable $x\in \Omega$. Thus, for convenience, the dependence on $x$ will mostly drop. In other words, instead of $u(x,\bm{y})$, the notation, $u(\bm{y})$, will be used. When the dependence on $\bm{y}$ is clear from the context without any confusion, it will sometimes be dropped as well. Also, unless otherwise stated, the notation for the spatial domain, $\Omega$, will be easily dropped. For example, $L^2$, $\int u$ and $H_0^1$ will be used instead of $L^2(\Omega)$, $\int_\Omega u(x) dx$ and $H_0^1(\Omega)$. Here, $L^r$ for $r\geq 1$ and $H_0^1$ is the space of Lebesgue measurable functions with finite $\|\cdot\|_{L^r}$ and $\|\cdot\|_{H_0^1}$ norm respectively. Those norms are defined by:
\begin{align}
\|u\|_{L^r}:=
\begin{cases}
\left( \int_{\Omega} |u|^r \right)^{\frac{1}{r}}, &r<\infty,\\
\underset{x\in \Omega}{\mathrm{esssup}} |u(x)|, &r=\infty.
\end{cases}
\end{align} 
\begin{align}
\|u\|_{H_0^1}:=\|\nabla u\|_{L^2}.
\end{align}
The inner product for $L^2$ is notated by $\left<\cdot,\cdot \right>$ which is clearly defined by $\left<f,g 
\right>=\int fg$. The set $C^k$ in this paper is the space of $k$-times continuously differentiable functions. For any complex Banach space $\mathcal{X}$, and any $\Phi\subset \mathcal{X}$, the span of $\Phi$ is defined by 
\begin{align}
\label{span}
span_{\mathcal{X}}\Phi
:=
\left\{
\sum_{f\in \Phi} c_f f : 
c_f\in \mathbb{C}
\text{ for every }f\in \Phi  
\right\}.
\end{align}
Especially, when $\Phi=\{ \phi  \}$,
\begin{align}
span_{\mathcal{X}}\Phi= \{c\phi : c\in \mathbb{C}  \}.
\end{align}
When $\mathcal{X}$ is a Hilbert space with the inner product $\left<\cdot,\cdot \right>$, for any subspace $S\subset \mathcal{X}$, $S^\perp$ is defined by
\begin{align}
S^{\perp}:=\{ f\in \mathcal{X}: \left<f,g \right>=0 \text{ for every }g\in S  \}.
\end{align}
  Boldface letters will be used for elements in $\mathbb{R}^\mathbb{N}$ or $\mathbb{C}^{\mathbb{N}}$. For the convenience, for any constant $c\in \mathbb{C}$ and any $\bm{b}\in \mathbb{C}^{\mathbb{N}}$, notate $c\bm{b}:=(cb_1,cb_2,\dots)$. In the case then $\bm{c}\in \mathbb{C}^\mathbb{N}$, the notation $\bm{c}\bm{b}:=(c_1b_1,c_2b_2,\dots)$ is used. Especially, $\bm{y}=(y_1,y_2,\dots)\in U$ is used for stochastic parameter and $\bm{\nu}=(\nu_1,\nu_2,\dots)$ is used for multi-index. The size of the multi-index is defined by $|\bm{\nu}|=\sum_{i\geq 1} \nu$.  The class of multi-indices $\mathcal{F}=\{\bm{\nu}\in \mathbb{N}^\mathbb{N}: |\bm{\nu}|<\infty \}$ will be mainly used. The set of natural numbers, $\mathbb{N}$, contains $0$. For multi-index power or derivative, for any $\bm{\nu}\in \mathcal{F}$, for any analytic function $f:U\rightarrow \mathbb{C}$ and $\bm{y}\in U$, denote 
   \begin{align}
   \partial^{\bm{\nu}} f(\bm{y})=\frac{\partial^{|\bm{\nu}|}f\ }{\partial y_1^{\nu_1}\partial y_2^{\nu_2}\cdots }(\bm{y}),
   \end{align}
   and
\begin{align}
\bm{y}^{\bm{\nu}}=\prod_{i\geq 1}y_i^{\nu_i}.
\end{align}   
 As for multi-index combination, the following notation will be used: 
 \begin{align}
 \begin{pmatrix}
 \bm{\nu}\\\bm{m}
 \end{pmatrix}=\prod_{j\geq 1} 
 \begin{pmatrix}
 \nu_j\\m_j
 \end{pmatrix}.
 \end{align}
In the case when more than three multi-indices are involved, define
\begin{align}
\begin{pmatrix}
&\bm{\nu}\\
\bm{m}_{1} &\cdots &\bm{m}_p
\end{pmatrix}
:=
\prod_{j\geq 1}
\begin{pmatrix}
&\nu_j\\
m_{1j} &\cdots &m_{pj}
\end{pmatrix}:=\prod_{j\geq 1}
\frac{\nu_j!}{m_{1j}!m_{2j}!\cdots m_{pj}!}.
\end{align} 
Sometimes, an abused notation will be used for convenience. For example, for $J\subset \mathbb{N}$ and $\bm{y},\bm{z}\in \mathcal{F}$, it is defined:
\begin{align}
(\bm{z}_{J};\bm{y}_{-J})_j
:=
\begin{cases}
z_j, &j\in J,\\
y_j, &j\notin J.
\end{cases}
\end{align}
For a given $p\in (0,\infty)$, the space $\ell^p$ is defined by 
\begin{align}
\ell^p : = \left\{ \bm{b}\in \mathbb{C}^\mathbb{N}: \sum_{j=1}^{\infty} |b_j|^p <\infty   \right\}.
\end{align}
 Lastly, for a given positive integer $j$, $\bm{e}_j$ will be used as a multi-index whose entries are all zero except the $j$-th entry being $1$. Also, the notation $\partial_j^{n}:=\partial^{n\bm{e}_j}$ will be used when only a single variable is concerned.
\\

\section{Preliminary}
In this section, some necessary background for this paper will be provided, as well as some references. A special focus will be on $(\bm{b},\varepsilon)-$hololmorphy.

\subsection{$({b},\varepsilon)$- holomorphy} \label{behol} Throughout this paper, the functions of interest are the mappings from  $U:=[-1,1]^{\mathbb{N}}$ to $\mathcal{X}$ where $\mathcal{X}$ is a given Banach space chosen depending on the problems. Especially, a particular focus will be placed on the complex analytic or holomorphic extension of the maps. The domain of the holomorphic extension of interest in this paper is related to Berstein-ellipse.  For any $\rho>1$, the Berstein-ellipse is defined by:
\begin{align*}
\mathcal{E}_\rho
:=
\left\{
\frac{z+z^{-1}}{2}: z\in \mathbb{C}, 1\leq |z|<\rho
\right\}
\subset 
\mathbb{C}.
\end{align*} 
The ellipse defined above has the foci $\pm 1$, and the lengths of major and semi axes are $\rho+\frac{1}{\rho}$ and $\rho-\frac{1}{\rho}$ respectively. Especially, it contains the interval $[-1,1]$ for any $\rho>1$. To be specific, for any $r\in [-1,1]$, with elementary algebra, it is easy to see that the equation $\frac{z+z^{-1}}{2}=r$ always has a solution $z$ with $|z|=1$. For a given sequence of positive numbers $\bm{\rho}:=(\rho_j)_{j\geq 1}\subset (1,\infty)^\mathbb{N}$, define the poly-ellipse by
\begin{align}
\mathcal{E}_{\bm{\rho}}
:=
\prod_{j\geq 1}\mathcal{E}_{\rho_j}\subset \mathbb{C}^{\mathbb{N}}.
\end{align}
Throughout this paper, the topology on $\mathbb{C}^{\mathbb{N}}$ is the product topology, the weakest topology allowing every projection mapping to be continuous. For every $\bm{\rho}\in (1,\infty)^{\mathbb{N}}$, the topology on $\mathcal{E}_{\bm{\rho}}$ is the subspace topology of the product topology. One of the main tasks in this paper is to extend the domain of the ground eigenpairs defined on $U$ to the poly-ellipse.\\

\begin{definition} 
Let $\mathcal{X}$ be a given Banach space and let $O\subset \mathbb{C}^{\mathbb{N}}$ be a domain.
 A function $f: O\rightarrow  \mathcal{X}$  is called separately holomorpic(or separately complex analytic) if it is holomorphic for each coordinate separately fixing other coordinates.
 \\
\end{definition}

\begin{definition}
\label{holo}
%[From Definition 3.3  in \cite{zech}]
Let $\mathcal{X}$ be a complex Banach space and let $\bm{b}=(b_j)_{j\geq 1}$ be a monotonically decreasing sequence of positive real numbers with $\bm{b}\in \ell^p$ for some $p\in (0,1]$. A map $u:U\rightarrow \mathcal{X}$  is called $(\bm{b},\varepsilon)$-holomorphy for some $\varepsilon>0$ if there exists a constant $M>0$ such that 
\begin{enumerate}
\item $u:U\rightarrow \mathcal{X}$ is continuous.

\item For every $(\bm{b},\varepsilon)$-admissible sequence $\bm{\rho}\in (1,\infty)^\mathbb{N}$, i.e.,
\begin{align}
\label{second}
\sum_{j\geq 1} b_j (\rho_j-1)\leq \varepsilon,
\end{align}
$u$ has a separately holomorphic extension on $\mathcal{E}_{\bm{\rho}}$.

\item For every $(\bm{b},\varepsilon)-$admissible $\bm{\rho}$, the following holds:
\begin{align}
\label{third}
\sup_{\bm{z}\in \mathcal{E}_{\bm{\rho}}}
\|u(\bm{z})\|_\mathcal{X}\leq M.
\end{align}

\end{enumerate}

\end{definition}

 In the definition of $(\bm{b},\varepsilon)-$holomorphy, the topology of $U$ is the subspace topology of the product topology on $\mathbb{C}^{\mathbb{N}}$. Thus, by Tychonoff's theorem, $U$ is compact. From the definition, for a given Banach space $\mathcal{X}$, in order to show that a map $u: U\rightarrow \mathcal{X}$ is $(\bm{b},\varepsilon)-$holomorphic, it is needed to find the decreasing sequence $\bm{b}\in \ell^p$ with constants $\varepsilon>0$ and $M>0$ that satisfies the three conditions in the definition. Some well explained literature about $(\bm{b},\varepsilon)-$holomorphy, can be found in \cite{zech, para3,para2}.

 \subsection{Parametric PDEs}
 \label{parapde} One of the goals of this paper is to show $(\bm{b},\varepsilon)-$holomorphy of the ground eigenpair of EVPs using the main result addressed in section 3. The parametric PDE problems depend on the parameters through the following form of affine parametric dependence:
 \begin{align}
 \label{paradep}
 L(\bm{y})=\phi_0+\sum_{j\geq 1} y_j \phi_j\in L^\infty,
 \end{align}
 where $\bm{y}:=(y_j)_{j\geq 1}\in U$ and $(\phi_j)_{j\geq 1}\subset L^\infty$. In general, parametric PDE problems can be represented by $\mathcal{P}(L(\bm{y}),u)=0$ where $u$ is the solution. Since $L(\bm{y})$ depends on the parameter $\bm{y}$, the solution has the dependence, so it can be notated by $u(\bm{y})$. In the case of EVP, the problem can be formulated by $\mathcal{P}(L(\bm{y}),u,\lambda)=0$, so that the eigenvalue $\lambda$ will also depend on $\bm{y}$.
 
  One of the main research streams in parametric PDEs is to have its coefficient functions depending on $L(\bm{y})$, and an incomplete list of such researches is \cite{alex2019, Bahn2024, alexey, Cohen2015, alexey2, para3}. For example, the following parametric Darcy flow has been considered many times:
  \begin{align}
  \label{parapdeex}
\begin{cases}
-\nabla\cdot(A(x,\bm{y})\nabla)u(x,\bm{y})=f(x), 
&(x,\bm{y})\in \Omega \times U,\\
u(x,y)=0, &(x,\bm{y})\in \partial\Omega \times U,
\end{cases}  
  \end{align}
  with 
  \begin{align}
  A(x,\bm{y})=\varphi_0 + \sum_{j\geq 1} y_j \varphi_j(x),
  \end{align}
  where $A(\bm{y})\in L^\infty$ and $(\varphi_j)_{j\geq 0}\subset L^\infty$. The dependence of the coefficient function $A$ on $\bm{y}$ is propagated to the dependence of the solution $u$ on $\bm{y}$, so the solution can be considered as a mapping $u: U\rightarrow H_0^1$. The affine dependence of $A$ on $\bm{y}$ is motivated by Karhunen-Lo\'eve (KL) expansion. By KL theorem, any square-integrable centered stochastic process has a KL expansion. Detailed explanation can be found in \cite{kar}.
  
   In addition, parametric PDEs having the boundary of the domain depending on $L(\bm{y})$ has been well studied. For example, instead of the coefficient function $A$, the domain $\Omega\subset \mathbb{R}^2$ in \eqref{parapdeex} can be parametrized by
\begin{align}
\Omega(\bm{y}):=
\left\{
(x_1,x_2):
0\leq x_1 \leq 1,\hspace{0.3 cm} 0\leq x_2\leq L(x_1,\bm{y})
\right\}.
\end{align}   
 An incomplete list of such studies can be found in \cite{jurgen, para3, jose1}.

\section{Main results} This section is devoted to providing a framework to show $(\bm{b},\varepsilon)$-holomorphy of the solutions to a class (possibly larger classes) of PDEs. The result from this section will be used to show that the ground eigenpairs of the EVPs stated in the previous sections satisfy $(\bm{b},\varepsilon)-$holomorphy in the next section. The main questions the following theorem tries to answer is that, when the bounds for derivatives are given, what is possible to say about $(\bm{b},\varepsilon)$-holomorphy of the function, what additional conditions are needed, and then what would be the possible $\bm{b}$ and $\varepsilon$.  Now, the main theorem is presented below:\\

\begin{theorem}
\label{main}
Let $(\mathcal{X},\|\cdot\|_{\mathcal{X}})$ be a given Banach space and let $u:U\rightarrow \mathcal{X}$ be a continuous function. Suppose that $u$ is separately holomorphic extension. In other words, $u_j(z_j):=u(z_j;\bm{y}_{-j})$ is holomorphic(complex analytic) on  some open set $V_j\supset [-1,1]$ for all $j\geq 1$ and that, for every $j\geq 1$, the following bound holds: for any $n\geq 1$,
\begin{align}
\label{deri-bound}
 \sup_{\bm{y}\in U}\|\partial_j^n u(\bm{y})\|_{\mathcal{X}}\leq \alpha_n \beta^n_j,
\end{align} 
where $\bm{\beta}:=(\beta_j)_{j\geq 1}$ is a sequence of positive numbers and $\bm{\alpha}:=(\alpha_j)_{j\geq 1}$ is a sequence of non-negative numbers satisfying
\begin{align}
\label{radius}
0<\varepsilon=\lim_{n\rightarrow \infty}\frac{\alpha_n(n+1)}{\alpha_{n+1}}<\infty,
\end{align}
 for some constant $\varepsilon>0$. Further suppose that there exists $\bm{\gamma}\in (1,\infty)^{\mathbb{N}}$ such that $u$ has a jointly continuous extension onto $H_{\bm{\gamma}}$ where
  \begin{align}
 \label{H}
 H_{\bm{\gamma}} : =\prod_{j\geq 1} \left\{ z\in \mathbb{C} : \min_{y\in [-1,1]} |z-y|<\frac{\varepsilon}{\gamma_j \beta_j} \right\}.
 \end{align}
Then, if $\bm{b}:=\bm{\gamma \beta }\in \ell^p$ with $p\in (0,1]$ is a decreasing,  $u:U\rightarrow \mathcal{X}$ satisfies $(\bm{b},\varepsilon)-$holomorphy.
\end{theorem}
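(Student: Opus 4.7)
Item (1) of Definition \ref{holo} is immediate from the continuity hypothesis, so the real task is to verify items (2) and (3) for each $(\bm{b},\varepsilon)$-admissible $\bm{\rho}$, where $\bm{b}=\bm{\gamma\beta}$. The plan is to first produce a one-variable holomorphic extension to a slab via Taylor, then locate $\mathcal{E}_{\bm{\rho}}$ inside a compact subset of $H_{\bm{\gamma}}$ to simultaneously deliver the uniform bound, and finally bootstrap separate holomorphy from real to complex off-coordinates with Morera's theorem.

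\textbf{One-variable Taylor extension.} The limit \eqref{radius} is exactly the ratio-test criterion ensuring that $\sum_{n\geq 0}\alpha_n r^n/n!$ has radius of convergence $\varepsilon$. Combined with \eqref{deri-bound}, the one-variable Taylor series $\sum_n \partial_j^n u(\bm{y})(z_j-y_j)^n/n!$ converges absolutely in $\mathcal{X}$ on $|z_j-y_j|<\varepsilon/\beta_j$ for every $\bm{y}\in U$. Sweeping the center $y_j$ across $[-1,1]$ and stitching the resulting Taylor extensions via the identity theorem on their real intersections produces a holomorphic extension of $u$ in $z_j$ on the slab $\{z:\operatorname{dist}(z,[-1,1])<\varepsilon/\beta_j\}$ whenever the remaining coordinates are real.

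\textbf{Geometric inclusion and uniform bound.} A direct computation shows the maximum distance from $\mathcal{E}_{\rho_j}$ to $[-1,1]$ equals the semi-minor axis $(\rho_j-\rho_j^{-1})/2=(\rho_j-1)(\rho_j+1)/(2\rho_j)\leq\rho_j-1$. Admissibility \eqref{second} forces $\rho_j-1\leq\varepsilon/(\gamma_j\beta_j)$, so $\overline{\mathcal{E}_{\rho_j}}\subset\{z:\operatorname{dist}(z,[-1,1])<\varepsilon/(\gamma_j\beta_j)\}$ with strict inequality for $\rho_j>1$, and hence $\overline{\mathcal{E}_{\bm{\rho}}}\subset H_{\bm{\gamma}}$. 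The product $K:=\prod_j\overline{\mathcal{E}_{1+\varepsilon/(\gamma_j\beta_j)}}$ is compact by Tychonoff (each factor is closed and bounded in $\mathbb{C}$), lies inside $H_{\bm{\gamma}}$ by the same semi-minor-axis computation, and contains $\mathcal{E}_{\bm{\rho}}$ for every admissible $\bm{\rho}$. Restricting the jointly continuous extension $\tilde u$ to $K$ yields a compact, hence bounded, image in $\mathcal{X}$, and therefore a single constant $M$, $\bm{\rho}$-independent, that realizes \eqref{third}.

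\textbf{Separate holomorphy at complex off-coordinates.} For real $\bm{y}_{-j}$, separate holomorphy of $\tilde u(\cdot;\bm{y}_{-j})$ on $\mathcal{E}_{\rho_j}$ is immediate from the first step. To upgrade to complex $\bm{z}_{-j}\in\mathcal{E}_{\bm{\rho}_{-j}}$, I invoke Morera's theorem: for every triangle $T\subset\mathcal{E}_{\rho_j}$ the map $\bm{z}_{-j}\mapsto\oint_T\tilde u(\zeta;\bm{z}_{-j})\,d\zeta$ is continuous by joint continuity and compactness of $T$, and vanishes on $U_{-j}$ by Cauchy--Goursat together with the first step. Propagating the vanishing to complex off-coordinates is performed one coordinate at a time: once holomorphy in $z_j$ has been established for $\bm{z}_{-j}$ in a larger subset of $H_{\bm{\gamma},-j}$, Cauchy's integral formula in $z_j$ transfers the single-variable derivative bound \eqref{deri-bound} from real to complex arguments of the newly complexified coordinate, feeding a further Morera/Taylor step in the next variable. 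The hard part will be making this bootstrap fully rigorous in the infinite-dimensional product topology---exchanging Cauchy integrals in one direction against Taylor expansions in another while preserving the $\mathcal{X}$-valued uniform estimates from the first step---which is precisely where the jointly continuous extension hypothesis and the derivative bound \eqref{deri-bound} must be carefully orchestrated.
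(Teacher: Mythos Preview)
Your core argument---one-variable Taylor extension from the derivative bound \eqref{deri-bound}, containment $\mathcal{E}_{\rho_j}\subset H_{\gamma_j}$ via the semi-minor-axis estimate, and the uniform bound from Tychonoff compactness and joint continuity---is exactly the paper's route. Your observation that the maximal distance from $\mathcal{E}_{\rho_j}$ to $[-1,1]$ equals the semi-minor axis is in fact a tidier version of what the paper does: the paper bounds the semi-minor axis and the major-axis overhang separately (inequalities \eqref{semiaxis} and \eqref{majoraxis}) rather than identifying the former as the maximum.

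Where you go beyond the paper is your third paragraph. You correctly notice that the Taylor step only yields holomorphy in $z_j$ when the remaining coordinates lie in $[-1,1]$, whereas Definition~\ref{holo} asks for separate holomorphy on all of $\mathcal{E}_{\bm{\rho}}$, i.e.\ with complex off-coordinates. The paper does not address this point: after the one-variable Taylor argument it simply asserts ``it has been proven that $u$ is separately holomorphic on $H_{\bm{\gamma}}$'' and moves on. Your Morera/bootstrap sketch is a plausible route to close this, but as you say it is the hard part: the circularity you are skirting is that to run Morera in coordinate $j$ with $z_\ell$ complex you need holomorphy in $z_\ell$ with $z_j$ complex, and vice versa. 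A cleaner way to organise the induction is to work with finitely many complexified coordinates at a time and invoke Hartogs' theorem (joint continuity plus separate holomorphy in each of finitely many variables gives joint holomorphy, hence separate holomorphy at complex off-coordinates for free), then pass to the limit. Either way, you are doing strictly more than the paper, not less.
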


\begin{proof}
 Let an integer $j\geq 1$ and a parameter $\bm{y}\in U$ be given. Let $\bm{z}=(z_j;\bm{y}_{-j})$ where $z_j\in \mathbb{C}$. Due to the separately holomorphic extension condition on $\prod_{j\geq 1} V_j$, the Taylor expansion of $u$ at $\bm{y}\in U$ in $z_j$ can be written as
\begin{align}
u(\bm{z})=\sum_{n=0}^{\infty}\partial_j^nu(\bm{y})\frac{(z_j-y_j)^n}{n!}.
\end{align}
Then, by the given bound \eqref{deri-bound} for the derivatives of $u$ with the triangular inequality, observe that
\begin{align}
\label{holo1}
\|u(\bm{z})\|_{\mathcal{X}}
\leq
\sum_{n=0}^{\infty}
\left\|
\partial_j^nu(\bm{y})
\right\|_{\mathcal{X}}
\frac{|z_j-y_j|^n}{n!}
\leq
 \sum_{n=0}^{\infty}
 \frac{\alpha_n}{n!}
 \beta_j^n
 |z_j-y_j|^n.
\end{align}
Now, note that the radius of the convergence of this series is
\begin{align}
\label{radi}
\lim_{n\rightarrow \infty}
\frac
{\frac{\alpha_n}{n!}
 \beta_j^n}
{\frac{\alpha_{n+1}}{(n+1)!}
 \beta_j^{n+1}}
 =
 \lim_{n\rightarrow \infty}
 \frac{\alpha_n(n+1)}{\alpha_{n+1}\beta_j}=\frac{\varepsilon}{\beta_j}.
\end{align}
 Therefore, $u_j(z_j):=u(z_j;\bm{y}_{-j})$ is holomorphic in the open ball of radius ${\varepsilon}/{\beta_j}$  centered at $y_j$. 
 Note that, at the boundary of the circle of radius $\varepsilon/\beta_j$, the convergence is undetermined. Thus, for now, the holomorphy is only defined in the open ball. In order to use the compactness of the domain later, the domain of interest will be restricted to the closed ball of radius ${\varepsilon}/{(\gamma_j \beta_j)}$ for $\gamma_j >1$ which is given in the statement. So far, it has been proven that $u$ is separately holomorphic on
 \begin{align}
 \label{H}
 H_{\bm{\gamma}} : =\prod_{j\geq 1} H_{\gamma_j},
 \end{align}
 where, for every $j\geq 1$,
 \begin{align}
 H_{\gamma_j}:= \bigcup_{y_j\in [-1,1] }B\left(y_j,\varepsilon/\gamma \beta_j\right)=\left\{ z\in \mathbb{C} : \min_{y\in [-1,1]} |z-y|\leq \frac{\varepsilon}{\gamma_j \beta_j} \right\},
 \end{align}
 where $B(c,r)=\{z\in \mathbb{C}: |z-c|\leq  r\}$ is a closed ball. 
 
  Now let $\bm{\rho}=(\rho_j)_{j\geq 1}$ be a given $(\bm{\gamma}\bm{\beta}, \varepsilon)-$admissible sequence assuming that $\bm{b}:=\bm{\gamma \beta}\in \ell^p$ for some $p\in (0,1]$ is a decreasing sequence. In other words, $\bm{\rho}$ satisfies the following inequality:
\begin{align}
\label{main_admissible}
\sum_{j=1}^\infty \gamma_j \beta_j(\rho_j-1)\leq \varepsilon.
\end{align} 
This condition implies that  $\rho_j<1+\frac{\varepsilon}{\gamma_j \beta_j}$ with some manipulation from $\gamma_j \beta_j(\rho_j-1)\leq \varepsilon$ for every $j\geq 1$. Recall that the Bernstein ellipse $\mathcal{E}_{\rho_j}$ has foci $\pm 1$ and the length of the minor-axis $\frac{1}{2}\left(\rho_j-\frac{1}{\rho_j}\right)$. Note that this length is a strictly increasing function in $\rho_j$. To be specific, with the function defined by 
\begin{align}
\mathcal{R}_{minor}(\rho) : =\frac{1}{2}\left( \rho -\frac{1}{\rho}  \right),
\end{align}
observe that
\begin{align}
\label{miinc}
\frac{d}{d\rho} \mathcal{R}_{minor}(\rho) = \frac{1}{2}\left(1+\frac{1}{\rho^2} \right)>0,
\end{align}
for all $\rho >1$.
 Therefore, it holds that
\begin{align}
\label{semiaxis}
\frac{1}{2}
\left(
\rho_j-\frac{1}{\rho_j}
\right)
&< 
\frac{1}{2}
\left(
\frac{\gamma_j \beta_j+\varepsilon}{\gamma_j  \beta_j}-\frac{\gamma_j  \beta_j}{\gamma_j  \beta_j+\varepsilon}
\right)
\notag\\
&=
\frac{1}{2}\frac{\varepsilon^2+2\gamma_j  \beta_j \varepsilon}{\gamma_j  \beta_j(\gamma_j  \beta_j+\varepsilon)}
\notag\\
&=
\frac{\varepsilon}{\gamma_j  \beta_j}\left(\frac{\varepsilon+2\gamma_j  \beta_j}{2\gamma_j  \beta_j+2\varepsilon} \right)
\notag\\
&<
 \frac{\varepsilon}{\gamma_j  \beta_j},
\end{align}
where the first inequality is by \eqref{miinc}.
Note that the the length of the major-axis $\frac{1}{2}\left(\rho_j+\frac{1}{\rho_j}\right)$ is also an increasing function in $\rho_j$ because $\rho_j>1$. To be specific, with the function defined by 
\begin{align}
\mathcal{R}_{major}(\rho) : =\frac{1}{2}\left( \rho +\frac{1}{\rho}  \right), 
\end{align}
observe that
\begin{align}
\label{mainc}
\frac{d}{d\rho} \mathcal{R}_{major}(\rho) = \frac{1}{2}\left(1-\frac{1}{\rho^2} \right)>0,
\end{align}
for all $\rho >1$. Thus, the distance between the right end of the ellipse in the direction of the major axis and the foci $+1$ is
\begin{align}
\label{majoraxis}
\frac{1}{2}
\left(
\rho_j+\frac{1}{\rho_j}
\right)-1
&<
\frac{1}{2}
\left(
\frac{\gamma_j  \beta_j+\varepsilon}{\gamma_j  \beta_j}+\frac{\gamma_j  \beta_j}{\gamma_j  \beta_j+\varepsilon}
\right)-1
\notag\\
&=
\frac{1}{2}\frac{\varepsilon^2+2\gamma_j  \beta_j \varepsilon+2\gamma_j ^2 \beta_j^2}{\gamma_j  \beta_j(\gamma_j  \beta_j+\varepsilon)}-1
\notag\\
&<
\frac{1}{2}
\left(
\frac{2(\gamma_j  \beta_j+\varepsilon)^2}{\gamma_j  \beta_j(\gamma_j  \beta_j+\varepsilon)}
\right)-1
\notag\\
&=
\left(\frac{\varepsilon+\gamma_j  \beta_j}{\gamma_j  \beta_j} \right)-1
\notag\\
&
=
 \frac{\varepsilon}{\gamma_j  \beta_j},
\end{align}
where the first inequality is by \eqref{mainc}.

So far, it has been shown that, for each $y_j\in [-1,1]$, $u$ has a separately holomorphic extension whose domain contains the closed ball in $\mathbb{C}$ centered at $y_j$ for every  $y_j\in [-1,1]$ with radius ${\varepsilon}/{(\gamma \beta_j)}$ in \eqref{radi}. 
 Then \eqref{semiaxis} and \eqref{majoraxis} imply that, because the radius of the domain of separately holomorphic extension is larger than the length of the semi-axis of $\mathcal{E}_{\rho_j}$ and larger than the distance between foci and the vertex, the Bernstein ellipse $\mathcal{E}_{\rho_j}$ is contained in the region of the separately holomorphy of $u$.
In other words, to be specific, the following inclusion holds:
\begin{align}
\label{ext}
\mathcal{E}_{\rho_j} \subset H_{\gamma_j}= \bigcup_{y_j\in [-1,1] }B\left(y_j,\varepsilon/\gamma_j \beta_j\right).
\end{align} 

   Since $u_j$ has a holomorphic extension on $H_{\gamma_j}$ and $\mathcal{E}_{\rho_j}\subset H_{\gamma_j}$ for all $j\geq 1$ is verified from \eqref{ext}, $u$ has separately holomorphic extension on $\mathcal{E}_{\bm{\rho}}$. It proves the second condition of $(\bm{\gamma}\bm{\beta},\varepsilon)-$holomorphy, for example, in \eqref{second}. 

 Lastly, it is left to show that the last condition for $(\bm{\gamma} \bm{\beta},\varepsilon)$-holomorphy, the existence of the uniform bound $M$ in \eqref{third}. It will be shown by using the continuity of $u$ and the compactness of $H_{\bm{\gamma}}$. Recall that, from the statement, the sequence $\bm{\gamma}$ is already chosen to have $u$ continuous on $H_{\bm{\gamma}}$. Thus, due to the continuity of $u: H_{\bm{\gamma}}\rightarrow \mathcal{X}$, it is only necessary to show the compactness of $H_{\bm{\gamma}}$.
 The compactness can be easily seen by Tychonoff's theorem because $H_{\bm{\gamma}}$ is the infinite product of compact sets $H_{\gamma_j}$'s. (An elementary proof of this is provided in Remark \ref{prodtop}.) 
 Therefore, since the image of a compact set under a continuous function is compact, $\|u(H_{\bm{\gamma}})\|_{\mathcal{X}}:=\{\|u(\bm{z})\|_{\mathcal{X}}:\bm{z}\in H_{\bm{\gamma}}   \}\subset \mathbb{R}$ is compact. Then, again by the Heine-Borel theorem, $\|u(H_{\bm{\gamma}})\|_{\mathcal{X}}$ is bounded.  Thus, it concludes that there exists a constant $M_{\bm{\gamma}}$ defined by 
\begin{align}
\label{Mbound}
M_{\bm{\gamma}}:=\sup_{\bm{z}\in H_{\bm{\gamma}}}\|u(\bm{z})\|_{\mathcal{X}}\geq \sup_{\bm{z}\in \mathcal{E}_{\bm{\rho}}}\|u(\bm{z})\|_{\mathcal{X}},
\end{align}
  for every $(\bm{\gamma} \bm{\beta},\varepsilon)-$admissible sequence  $\bm{\rho}$ by the result of \eqref{ext}. Thus, by with the constant $M_{\bm{\gamma}}$,  it concludes that \eqref{third} is satisfied. Combining all the results above, it can be concluded that $u$ is $(\bm{b},\varepsilon)-$holomorphic. 
\end{proof}

\begin{remark}

 Note that, in the proof, the condition for $\bm{\rho}$ in \eqref{main_admissible} is not fully used. In fact, only the weaker condition
 \begin{align}
 \max_{j\geq 1} \gamma_j \beta_j (\rho_j-1) \leq \varepsilon
 \end{align}
 is needed for the argument. As this weaker condition only allows to have larger class of $\bm{\rho}$. It only  makes the argument working for larger class of $\bm{\rho}$ than the class of $(\bm{\gamma\beta},\varepsilon)-$admissible that we actually need. It is still open if, with the exact condition \eqref{main_admissible}, more general class of functions $u$ than the one in Theorem \ref{main} could be shown to be $(\bm{b},\varepsilon)-$holomorphic.  

\end{remark}

\begin{remark}
From the result of Theorem \ref{main}, one might think that, since $u$ is separately holomorphic on $H_{\bm{\gamma}}$, by simple application of Cauchy's integral formula, 
\begin{align}
\sup_{\bm{y}\in U} \| \partial_j^n u(\bm{y}) \|_\mathcal{X}
\leq 
\alpha_n (\gamma_j \beta_j)^n
\end{align}
where 
\begin{align}
\alpha_n = \frac{n!}{\varepsilon^n} \sup_{\bm{Z}\in H_{\bm{\gamma}}}\|u(\bm{z})\|_{\mathcal{X}},
\end{align}
which seems recovering the condition similar with \eqref{deri-bound} that is assumed in Theorem \ref{main}. This could look as if the condition \eqref{deri-bound} unnecessary. However, note that, at the beginning of the proof, we did not know if $u$ is separately holomorphic on $H_{\bm{\gamma}}$. Indeed, the bound \eqref{deri-bound} has been used to show that $u$ is separately holomorphic on $H_{\bm{\gamma}}$.

\end{remark}

\begin{remark}
\label{prodtop}
 Note that, although $H_{\bm{\gamma}}$ is the infinite product of the compact sets $H_{\gamma_j}$, the size of $H_{\gamma_j}$ can diverge to infinity as $j$ grows. Thus, one can have doubts about the compactness of $H_{\bm{\gamma}}$. For such doubt, it is not difficult to present an elementary proof of the compactness. Indeed, recall that an open set in the product topology of $\mathbb{C}^\mathbb{N}$ is an arbitrary union of the sets of the form $\prod_{j\geq 1}O_j$ where, for each $j\geq 1$, $O_j\subset\mathbb{C}$ is an open set, but $O_j\neq \mathbb{C}$ only for finitely many $j\geq 1$. In other words, the basis $\mathcal{B}$ of the product topology is the collection of such open sets. Now, suppose that $\mathcal{C}:=\{ O^\alpha \}_{\alpha\in A}$ is an arbitrary open cover of $H_{\bm{\gamma}}$ where $A$ is an index set. With this, the existence of the finite open subcover will be shown. From an elementary fact from topology, it is enough to assume that  $\mathcal{C}\subset \mathcal{B}$. Then note that, because an open set $O\in \mathcal{C}$ already covers infinitely many dimensions of $H_{\bm{\gamma}}$ with $\mathbb{C}$, with the specified $O$, it suffices to cover the rest of the dimensions which is now finite. Let $O=\prod_{j\geq 1} O_j$ with only finitely many $O_j$ is open and not equal to $\mathbb{C}$. Let $O^\alpha=\prod_{j\geq 1}O_j^\alpha$ for each $\alpha\in A$ and let $I=\{ j\geq 1: O_j\neq \mathbb{C}  \}$ which is a finite set. Then, note that, for any $j\in I$, since $H_{\gamma_j}$ is compact (by Heine-Borel theorem since closed and bounded), there is a finite subset $S_j\subset A$ such that $\{O_j^\alpha\}_{\alpha\in S_j}$ covers $H_{\gamma_j}$. Since $S_j$ is finite for each $j$ and there are only finitely many $j\in I$, letting $S:=\bigcup_{j\in I}S_j$, the family $\{O\}\cup\{O^\alpha\}_{\alpha\in S}$ is a finite cover $H_{\bm{\gamma}}$. This proves that $H_{\bm{\gamma}}$ is compact.
 \end{remark}

Theorem \ref{main} suggests a general framework to check $(\bm{ b},\varepsilon)-$holomorphy, with possible $\bm{ b}\in \ell^p$ and $\varepsilon>0$, of the solution to some classes of PDEs. First of all, note that, due to Theorem \ref{main}, it is possible to check the holomorphy without dealing with infinite-dimensional complex analysis and multi-index notations. The following strategy, utilizing Theorem \ref{main}, will be used to show the $(\bm{ b},\varepsilon)-$holomorphy of the solution to a given parametric PDE:
 \\
 \begin{enumerate}

 \item Separately holomorphic extension: the Banach space version of the implicit function theorem will be used to show the holomorphic (complex analytic) dependence of the solution on each parameter. 
 
 \item Bound on the single parametric derivatives: the real inductive argument used in \cite{alexey, alex2019, Bahn2024} will be used to obtain the bound of the form of \eqref{deri-bound}, but with a simpler argument, especially without dealing with multi-index.
 
 \item  Continuous dependence on parameters: Banach space version of the implicit function theorem will be used to show the continuous dependence of the solution $u$ on the coefficient functions. Then, $\bm{\gamma}\in (1,\infty)^{\mathbb{N}}$ is chosen so that the image of the parameter space $H_{\bm{\gamma}}$ under the coefficient function is in the domain of continuity for $u$.

 \end{enumerate}

 With the same condition as Theorem \ref{main}, it is possible to obtain the bounds for mixed derivatives in the following theorem. It is notable that the bound can be recovered even if the multi-index arguments are avoided, thanks to Cauchy's integral formula. Furthermore, the recovered bounds are enough to guarantee quasi-Monte Carlo methods. 
\\

\begin{theorem}
\label{impb}
Suppose all the notations and conditions given in Theorem \ref{main} hold. Then the following bound holds:
\begin{align}
\label{imp-bound}
\left\|  \partial^{\bm{\nu}}u(\bm{y}) \right\|_{\mathcal{X}} \leq  M_{\bm{\gamma}} \bm{\nu}! \left(\frac{\bm{\gamma}\bm{\beta}}{\varepsilon}\right)^{\bm{\nu}},
\end{align}
for every $\bm{\nu}\in \mathcal{F}$ and every $\bm{y}\in U$. The constant $M_{\bm{\gamma}} >0$ is from \eqref{Mbound}.
\end{theorem}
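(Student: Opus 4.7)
The plan is to apply the multivariate Cauchy integral formula to the separately holomorphic extension of $u$ on $H_{\bm{\gamma}}$ furnished by Theorem \ref{main}. Since every $\bm{\nu}\in\mathcal{F}$ has finite support $J:=\{j:\nu_j>0\}$, only finitely many variables are effectively involved, so the argument reduces to a classical finite-dimensional polydisc estimate on each slice obtained by fixing the remaining coordinates at $\bm{y}_{-J}$.

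For each $j\in J$, choose a radius $r_j<\varepsilon/(\gamma_j\beta_j)$. Because $y_j\in[-1,1]$, the closed disc $\overline{B(y_j,r_j)}$ sits inside $H_{\gamma_j}$, so the polydisc $\prod_{j\in J}\overline{B(y_j,r_j)}$ embeds into $H_{\bm{\gamma}}$ once the other coordinates are fixed at $\bm{y}_{-J}$. Iterating the one-variable Cauchy integral formula along the coordinates indexed by $J$ gives
\[
\partial^{\bm{\nu}}u(\bm{y})=\Bigl(\prod_{j\in J}\frac{\nu_j!}{2\pi i}\Bigr)\oint\cdots\oint\frac{u(\bm{z}_J;\bm{y}_{-J})}{\prod_{j\in J}(z_j-y_j)^{\nu_j+1}}\prod_{j\in J}dz_j,
\]
where each contour is the circle $|z_j-y_j|=r_j$. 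Using the uniform bound $\|u(\bm{z})\|_{\mathcal{X}}\leq M_{\bm{\gamma}}$ on $H_{\bm{\gamma}}$ from \eqref{Mbound} together with the standard $ML$-estimate, I obtain
\[
\|\partial^{\bm{\nu}}u(\bm{y})\|_{\mathcal{X}}\leq M_{\bm{\gamma}}\prod_{j\in J}\frac{\nu_j!}{r_j^{\nu_j}},
\]
and letting $r_j\nearrow\varepsilon/(\gamma_j\beta_j)$ for each $j\in J$ produces the advertised bound $M_{\bm{\gamma}}\,\bm{\nu}!\,(\bm{\gamma}\bm{\beta}/\varepsilon)^{\bm{\nu}}$.

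The delicate step, and the only one deserving care, is justifying the iterated Cauchy formula, because Theorem \ref{main} supplies only separate holomorphy on $H_{\bm{\gamma}}$ rather than joint holomorphy. On the finite-dimensional slice parametrized by $(z_j)_{j\in J}$ this is resolved either by invoking Hartogs' theorem (continuity plus separate holomorphy yields joint holomorphy, hence the polydisc Cauchy formula applies), or more elementarily by induction on $|J|$: having differentiated in coordinates $j_1,\dots,j_k$ via Cauchy, the joint continuity of $u$ on $H_{\bm{\gamma}}$ together with dominated convergence legitimizes differentiation under the contour integral in the next coordinate $j_{k+1}$. Once this is in place, the estimate above is immediate.
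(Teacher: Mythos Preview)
Your proposal is correct and follows essentially the same approach as the paper: restrict to the finite support $J=S_{\bm\nu}$ of $\bm\nu$, invoke Hartogs' theorem to pass from separate to joint holomorphy on the finite-dimensional slice, apply the polydisc Cauchy integral formula over circles centered at $y_j$, and estimate using the uniform bound $M_{\bm\gamma}$ from \eqref{Mbound}. The only cosmetic difference is that the paper integrates directly over circles of radius $\varepsilon/(\gamma_j\beta_j)$ (which lie in $H_{\gamma_j}$ because in the proof $H_{\gamma_j}$ is taken as a \emph{closed} set), whereas you take $r_j<\varepsilon/(\gamma_j\beta_j)$ and let $r_j\nearrow\varepsilon/(\gamma_j\beta_j)$; this is harmless and arguably slightly cleaner.
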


\begin{proof}
Let $\bm{\nu}\in \mathcal{F}$ and $\bm{y}\in U$ be given. Recall that, from the proof of Theorem \ref{main}, the map $u: U\rightarrow \mathcal{X}$ has a separately holomorphic extension to $H_{\bm{\gamma}}=\prod_{j\geq 1} H_{\gamma_j}$ defined in \eqref{H}. 
Define
\begin{align}
S_{\bm{\nu}}:=
\left\{
j\geq 1 : 
\nu_j\neq 0
\right\}.
\end{align}
Then, by Hartog's theorem, since $|S_{\bm{\nu}}|<\infty$, $u$ is holomorphic in $H^{S_{\bm{\nu}}}_{\bm{\gamma}}$ with 
\begin{align}
H^{S_{\bm{\nu}}}_{\bm{\gamma}} : = \prod_{j\in S_{\bm{\nu}}}H_{\gamma_j}.
\end{align}
Then the Cauchy integral formula can be applied to represent $\partial^{\bm{\nu}} u( \bm{y})$ by 
\begin{align}
\label{cauchy}
\partial^{\bm{\nu}} u(\bm{y})
=
\frac{\bm{\nu}!}{(2\pi i )^{|S_{\bm{\nu}}|}}
\oint_{\mathcal{C}(\bm{y},\bm{\nu})}
\frac
{u(\bm{z}_{S_{\bm{\nu}}};\bm{y}_{-S_{\bm{\nu}}})}
{
\prod_{j\in S_{\bm{\nu}}}
(z_j-y_j)^{\nu_j+1}
}
\prod_{j\in S_{\bm{\nu}}} dz_j,
\end{align}
where 
\begin{align}
\mathcal{C}(\bm{y},\bm{\nu})
:=
\prod_{j\in S_{\bm{\nu}}}
\mathcal{C}_j(\bm{y}),
\end{align}
and
\begin{align}
\mathcal{C}_j(\bm{y})
:=
\left\{
z\in \mathbb{C}:
|z-y_j|=\frac{\varepsilon}{\gamma_j \beta_j}
\right\}.
\end{align} 
Then observe that,
\begin{align}
\|\partial^{\bm{\nu}} u(\bm{y})\|_{\mathcal{X}}
&
\leq
\frac{\bm{\nu}!}{|2\pi i|^{|S_{\bm{\nu}}|}}
\oint_{\mathcal{C}(\bm{y},\bm{\nu})}
\frac
{\left\| u(\bm{z}_{S_{\bm{\nu}}};\bm{y}_{-S_{\bm{\nu}}})\right\|_{\mathcal{X}}}
{
\prod_{j\in S_{\bm{\nu}}}
|z_j-y_j|^{\nu_j+1}
}
\prod_{j\in S_{\bm{\nu}}} |dz_j|
\notag\\
&
\leq
\frac{\bm{\nu}!}{|2\pi i|^{|S_{\bm{\nu}}|}}
\oint_{\mathcal{C}(\bm{y},\bm{\nu})}
\frac
{M_{\bm{\gamma}} }
{
\frac{\varepsilon^{|\bm{\nu}|}}{\bm{\gamma}^{\bm{\nu}}\bm{\beta}^{\bm{\nu}}}
\left(\prod_{j\in S_{\bm{\nu}}}
\frac{\varepsilon}{\gamma_j  \beta_j}\right)
}
\prod_{j\in S_{\bm{\nu}}} |dz_j|
\notag\\
&
=
\frac{\bm{\nu}! \bm{\beta}^{\bm{\nu}}}{|2\pi i|^{|S_{\bm{\nu}}|}}
\frac{M_{\bm{\gamma}}  \bm{\gamma}^{\bm{\nu}} }{\varepsilon^{|\bm{\nu}|}}\left(\prod_{j\in S_{\bm{\nu}}} \frac{\gamma_j  \beta_j}{\varepsilon} \right)
\oint_{\mathcal{C}(\bm{y},\bm{\nu})}
\prod_{j\in S_{\bm{\nu}}} |dz_j|
\notag\\
&
=
\frac{\bm{\nu}! \bm{\beta}^{\bm{\nu}}}{|2\pi i|^{|S_{\bm{\nu}}|}}
\frac{M_{\bm{\gamma}}  \bm{\gamma}^{\bm{\nu}}}{\varepsilon^{|\bm{\nu}|}}\left(\prod_{j\in S_{\bm{\nu}}} \frac{\gamma_j  \beta_j}{\varepsilon} \right)
\left( \prod_{j\in S_{\bm{\nu}}}2\pi\frac{\varepsilon}{\gamma_j  \beta_j} \right)
\notag\\
&
=
\frac{ M_{\bm{\gamma}} \bm{\gamma}^{\bm{\nu}}}{\varepsilon^{|\bm{\nu}|}}
\bm{\nu}! \bm{\beta}^{\bm{\nu}},
\end{align}
where the first inequality is simple triangular inequality, the second inequality is by the definition of $M$ and $\mathcal{C}(\bm{y},\bm{\nu})$, and the second last equality is by the definition of $\mathcal{C}(\bm{y},\bm{\nu})$ again.  Therefore, the desired result \eqref{imp-bound} is obtained. 
\end{proof}

\begin{remark}
 It is worth noting that a function $u:U\rightarrow \mathcal{X}$ is $(\bm{b},\varepsilon)-$holomorphic if and only if it is $(\bm{b}/\varepsilon, 1)-$holomorphic. This equivalence means $\bm{b}$ and $\varepsilon$ take the same role. It is clear to see that the role of $\varepsilon$ together with $\bm{b}$ in Definition \ref{holo} is controlling the size of the region of holomorphic extension by controlling $\bm{\rho}$. Since $\bm{b}$ is a sequence of positive decreasing real numbers, $\bm{\rho}$ could be an increasing sequence diverging to infinity. Larger $\varepsilon$ or smaller $\bm{b}$ allows the function $u$ to have a broader holomorphic extension. In the case of Theorem \ref{main}, $\bm{b}$ and $\varepsilon$ are determined by the prior upper bound \eqref{deri-bound}. From this perspective, it is clear that better or tighter upper bound \eqref{deri-bound} gives a better bound \eqref{imp-bound}. For example, let $\|\partial_j^n u(\bm{y})\|_{\mathcal{X}}\leq Cn!(kb_j)^n$ for some constant $C>0$ and $k>0$. Then, in the spirit of \eqref{radi}, the radius of convergence of the Taylor expansion will be
\begin{align}
\label{exp-radi}
\lim_{n\rightarrow \infty} \frac{\frac{Cn!(kb_j)^n}{n!}}{\frac{C(n+1)!(kb_j)^{n+1}}{(n+1)!}  }
=
\frac{1}{kb_j}.
\end{align}
In this case, $\varepsilon=\frac{1}{k}. $ From \eqref{exp-radi}, if it happens to have a worse bound (with larger $k$ or larger $b_j$), the radius of convergence gets smaller, which shows the clear connection between the role of $\varepsilon$ in Definition \ref{holo} and $\varepsilon$ in \eqref{deri-bound}.
\end{remark}

\begin{remark}

In the case when $H_{\bm{\gamma}} \subset \mathcal{V}$ with $\bm{\gamma} = (\gamma,\gamma,\dots)$ for $\gamma>1$, the bound in \eqref{imp-bound} is now
\begin{align}
\label{imp-bound12}
\left\|  \partial^{\bm{\nu}}u(\bm{y}) \right\|_{\mathcal{X}} \leq  M_{\bm{\gamma}} \bm{\nu}! \left(\frac{{\gamma}\bm{\beta}}{\varepsilon}\right)^{\bm{\nu}},
\end{align}
which is a best possible bound for a given $\bm{\beta}$ based on the argument in this paper.

\end{remark}

\section{Parametric EVPs}
In this section, two PDEs of special interest, linear and semilinear EVPs, are studied. In the following EVPs, a part of coefficient functions depends on parameters $\bm{y}\in U$ as in Section \ref{parapde}, resulting in the parametric dependence of the ground eigenpairs. The EVPs of this paper concern the case of solution space being a complex Banach space $H_0^1$. In other words, the ground state $u$ and the ground eigenvalue $\lambda$ can be thought of as mappings from $U$ to $H_0^1$ and from $U$ to $\mathbb{C}$ respectively. The goal of this section is to show that the ground eigenpairs of the following EVPs are $(\bm{b},\varepsilon)-$holomorphic.

\subsection{Linear Eigenvalue Problem}
\label{linear}
 The focus of this subsection is on showing that parametric linear EVP \eqref{problem1} has $(\bm{b},\varepsilon)-$holomorphic ground eigenpair.
The problem is given as follows:
\begin{align}
\label{problem1}
\begin{cases}
-\nabla\cdot(A(x,\bm{y})\nabla)u(x,\bm{y})
+B(x,\bm{y})u(\bm{y})=C(x,\bm{y})\lambda(\bm{y})u(x,\bm{y}),&(x,\bm{y})\in \Omega\times U,
\\
u(x,\bm{y})=0,&(x,\bm{y})\in \partial\Omega\times U,
\end{cases}
\end{align}
where $U=[-1,1]^{\mathbb{N}}$ is the space of parameters, and $\Omega\subset\mathbb{R}^d$ is bounded convex domain with Lipschitz boundary. In order to have non-zero solutions, the normalization condition, $\|u(\bm{y})\|_{L^2}=1$ for all $\bm{y}\in U$, is enforced. As for the coefficient functions $A(x,\bm{y})$, $B(x,\bm{y})$ and $C(x,\bm{y})$, the following conditions will be assumed throughout this section:
\\

\begin{assumption}
\label{assumption1}
\begin{itemize}
\item The coefficient functions have the following representations:
\begin{align}
A(x,\bm{y})&=A_0(x)+\sum_{j\geq 1}y_j A_j(x),\\
B(x,\bm{y})&=B_0(x)+\sum_{j\geq 1} y_j B_j(x),\\
C(x,\bm{y})&=C_0(x)+\sum_{j\geq 1} y_j C_j(x),
\end{align}
with  $(A_j)_{j\geq 0}, (B_j)_{j\geq 0},(C_j)_{j\geq 0} \subset L^\infty$.
\\

\item There exist constants $\overline{A},\overline{B},$ and $\overline{C}$ such that
\begin{align}
\begin{matrix}
\left\|A(x,\bm{y}) \right\|_{L^\infty}
\leq \overline{A},
&\left\|B(x,\bm{y}) \right\|_{L^\infty}
\leq \overline{B},
&\left\|C(x,\bm{y}) \right\|_{L^\infty}
\leq \overline{C}.
\end{matrix}
\end{align}

\item There exist constants $\underline{A}>0$ and $\underline{C}>0$ such that 
\begin{align}
\label{underline}
\underline{A}\leq A(x,\bm{y})\text{, and  } \underline{C}\leq C(x,\bm{y}),
\end{align}
for all $(x,\bm{y})\in \Omega\times U$.

\item Let $\bm{\beta}:=  \zeta \bm{c} $ be defined with $ c_j:= \max\{\|A_j\|_{L^\infty}, \| B_j  \|_{L^\infty}, \| C_j  \|_{L^\infty}  \}$ for some constant $\zeta>1$ that will be specified in Lemma \ref{LEVP_bound}. There exists $\bm{\gamma}\in (1,\infty)^\mathbb{N}$ such that $\bm{\gamma \beta}\in \ell^p$ for some $p\in (0,1]$ is a sequence of decreasing positive numbers and 
\begin{align}
\label{Gamm}
\Gamma:=\sum_{j\geq 1}\frac{1}{\gamma_j}< \min\left\{ 1, 4{\underline{D}}  \right\},
\end{align}
where $\underline{D}:=\min\{ \underline{A}, \underline{C} \}$.

\end{itemize}
\end{assumption}

This problem has been well studied, and a short list of the references is \cite{andreev2010,alex2019, alexey}. In \cite{andreev2010}, it is assumed that  $B\equiv 0$ and $C\equiv 1$, and, in \cite{alex2019}, it is assumed that the function $C$ has no parametric dependence. The problem \eqref{problem1} is the same as the one in \cite{alexey} except that only one special kind of parametric dependence is considered in this paper. Also, it will follow a similar analysis for the part of bounding the derivatives. The function of interest is the ground eigenpair. The holomorphic dependence of the ground state on the parameters near $U$ is shown in Corollary 2 in \cite{andreev2010} when $B\equiv 0$ and $C\equiv 1$. Recently, in \cite{alex2019}, the upper bound for the mixed derivative is firstly obtained, $\|\partial^{\bm{\nu}}u(\bm{y})\|_{H_0^1}\leq R(|\bm{\nu}|!)^{1+\varepsilon}\bm{\beta}^{\bm{\nu}}$ for any $\varepsilon>0$, for any $\bm{\nu}\in \mathcal{F}$, and some $R>0$.  In \cite{alexey}, with new analysis, the better upper bound $\|\partial^{\bm{\nu}}u(\bm{y})\|_{H_0^1}\leq R(|\bm{\nu}|!)\bm{\beta}^{\bm{\nu}}$ is obtained. In this section, as corollaries of Theorem \ref{main} and Theorem \ref{impb}, $(\bm{b},\varepsilon)$-holomorphy and recovering the mixed derivative bound will be shown.
\\

 \begin{corollary}
 \label{LEP}
 Let $(u(\bm{y}),\lambda(\bm{y}))$ is the ground eigenpair of \eqref{problem1} under Assumption \ref{assumption1}. 
  Then $u:U\rightarrow H_0^1$ and $\lambda:U\rightarrow \mathbb{R}$ is $(\bm{b},\varepsilon)-$holomorphic with $\bm{b}:=\bm{\gamma \beta}$ and  $\varepsilon=\frac{1}{4}$. 
 \end{corollary}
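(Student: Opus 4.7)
The plan is to verify all hypotheses of Theorem \ref{main} for both $u:U\to H_0^1$ and $\lambda:U\to\mathbb{C}$, with $\bm{b}=\bm{\gamma\beta}$ and $\varepsilon=1/4$. The unifying tool is the Banach-space implicit function theorem applied to the holomorphic nonlinear map
\begin{align}
F(A,B,C,v,\mu):=\bigl(-\nabla\cdot(A\nabla v)+Bv-\mu C v,\ \langle v, u(\bm{y}_0)\rangle -1\bigr),
\end{align}
defined on a complex neighborhood of a chosen real configuration $(A(\bm{y}_0),B(\bm{y}_0),C(\bm{y}_0),u(\bm{y}_0),\lambda(\bm{y}_0))$. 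I would replace the $L^2$-normalization $\|v\|_{L^2}=1$ with $\langle v,u(\bm{y}_0)\rangle=1$ so that $F$ is genuinely holomorphic in $v$; at the real base point the two normalizations coincide to first order, so the unique real solution of $F=0$ near the base point is exactly the ground eigenpair of \eqref{problem1}. Invertibility of $D_{(v,\mu)}F$ at the real ground eigenpair follows from the Fredholm alternative: the operator $-\nabla\cdot(A\nabla\cdot)+B-\lambda C$ has one-dimensional kernel spanned by $u(\bm{y}_0)$, and the normalization row together with the $-Cv$ column closes the gap.

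From this setup, continuity of $(u,\lambda)$ on $U$ and a separate holomorphic extension on a common open set $V_j\supset[-1,1]$ for each coordinate $j$ follow by varying $z_j\in\mathbb{C}$, since $A,B,C$ depend affinely (hence holomorphically) on $z_j$. This takes care of two hypotheses of Theorem \ref{main} but yields only local control; what remains is to pin down $\varepsilon=1/4$ through the single-variable derivative estimate and to upgrade the local extensions to a jointly continuous extension on $H_{\bm{\gamma}}$.

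For the single-variable bound I would adapt the multi-index inductive argument of \cite{alexey,alex2019,Bahn2024} to a single coordinate. Differentiating the weak form $n$ times in $y_j$ and writing $\partial_j^n u=t_n u + w_n$ with $w_n\in \mathrm{span}(u)^{\perp}$, the $n$-th derivative of the normalization controls $t_n$ while coercivity of $-\nabla\cdot(A\nabla\cdot)+B-\lambda C$ on $\mathrm{span}(u)^\perp$ (with constant no worse than $\underline{D}$) controls $w_n$; the Leibniz rule then collapses to a one-dimensional recursion. Choosing the constant $\zeta$ in Assumption \ref{assumption1} large enough (as specified in the forthcoming Lemma \ref{LEVP_bound}) absorbs all $O(1)$ factors, and the bound takes the form $\|\partial_j^n u(\bm{y})\|_{H_0^1}+|\partial_j^n\lambda(\bm{y})|\leq \alpha_n\beta_j^n$ with $\alpha_n\sim C\,n!\,4^n$, so that $\alpha_n(n+1)/\alpha_{n+1}\to 1/4$. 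This is the main technical obstacle: the value $1/4$ must emerge cleanly and match the factor $4$ appearing in the condition $\Gamma<4\underline{D}$.

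For the joint extension onto $H_{\bm{\gamma}}$, observe that for every $\bm{z}\in H_{\bm{\gamma}}$ with nearest $\bm{y}\in U$,
\begin{align}
\|A(\bm{z})-A(\bm{y})\|_{L^\infty}\leq \sum_{j\geq 1}|z_j-y_j|\,\|A_j\|_{L^\infty}\leq \frac{1}{4\zeta}\sum_{j\geq 1}\frac{1}{\gamma_j}=\frac{\Gamma}{4\zeta},
\end{align}
and analogously for $B$ and $C$. Since $\Gamma<4\underline{D}$, the real parts of $A(\bm{z})$ and $C(\bm{z})$ remain uniformly positive, so the implicit function theorem (applied on each finite subset of coordinates and stitched together by Hartogs' theorem) extends $(u,\lambda)$ continuously to $H_{\bm{\gamma}}$. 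With the three hypotheses of Theorem \ref{main} verified, $u$ and $\lambda$ are $(\bm{\gamma\beta},1/4)$-holomorphic as claimed.
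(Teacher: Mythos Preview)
Your overall strategy mirrors the paper's three-step verification of Theorem~\ref{main}: separate holomorphy via the implicit function theorem, a single-variable derivative bound with $\alpha_n(n+1)/\alpha_{n+1}\to 1/4$, and a joint continuous extension to $H_{\bm{\gamma}}$. The first two steps are essentially the paper's Lemmas~\ref{LEVP_separate}, \ref{alpha_lemma}, and~\ref{LEVP_bound}; your normalization $\langle v,u(\bm{y}_0)\rangle=1$ is a legitimate variant of the paper's restriction to $E=\mathrm{span}\{u\}^{\perp}$, and your asymptotic $\alpha_n\sim C\,n!\,4^n$ is what the paper obtains precisely with the quadruple-factorial sequence $\alpha_n=(2(n-1))!/(n-1)!$.

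The genuine gap is in your third step. You argue that positivity of $\mathrm{Re}\,A(\bm{z})$ and $\mathrm{Re}\,C(\bm{z})$ on $H_{\bm{\gamma}}$, secured by $\Gamma<4\underline{D}$, lets ``the implicit function theorem (applied on each finite subset of coordinates and stitched together by Hartogs' theorem)'' extend $(u,\lambda)$ continuously to all of $H_{\bm{\gamma}}$. But the implicit function theorem is local: it produces a continuation only in a small ball around each base point where a simple ground eigenpair is already known to exist. Positivity of the real parts keeps the sesquilinear form coercive, yet it does not by itself rule out that, as you move into the complex region, the continued ``ground'' eigenvalue collides with a second eigenvalue, at which point simplicity fails and the IFT breaks down. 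In the paper's language, the set $\mathcal{Q}$ in Lemma~\ref{LEVP1} already \emph{assumes} the eigenpair exists; the nontrivial content of Lemma~\ref{LEVP_continuous} is precisely to verify this existence throughout $H_{\bm{\gamma}}$.

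The paper closes this gap not by IFT/Hartogs but by a quantitative continuation: it reparametrizes the complex excursion by a single scalar $r$, reuses the inductive derivative bound for the new one-parameter problem, and shows the Taylor series of $(u,\lambda)$ in $r$ converges for $|r|<1$. This is what forces the second constraint $\Gamma<1$ in Assumption~\ref{assumption1} (the $\min\{1,4\underline{D}\}$), which your argument never invokes or explains. Without this step, or an alternative a~priori bound preventing eigenvalue crossing on all of $H_{\bm{\gamma}}$, your joint-continuity claim is unsupported and the appeal to Theorem~\ref{main} is incomplete.
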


The proof of this corollary will be the combination of the lemmata from the following subsections.

\subsubsection{Separately holomorphy of the ground eigenpairs}
\label{linseparate}

In this subsection, the separately holomorphic property of the ground eigenpairs is shown. In the proof, the implicit function theorem will be used.

\begin{lemma}
\label{LEVP_separate}
The ground eigenpair $(u(\bm{y}),\lambda(\bm{y})):U\rightarrow H_0^1\times \mathbb{R}$ of \eqref{problem1} has separately holomorphic extension onto $\mathcal{O}$ where $U\subset \mathcal{O}\subset \mathbb{C}^\mathbb{N}$ with $\mathcal{O}:=\prod_{j\geq 1}\mathcal{O}_j $ where  $\mathcal{O}_j\subset \mathbb{C}$ is an open set for all $j\geq 1$.
\end{lemma}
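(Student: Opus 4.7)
The plan is to invoke the Banach-space implicit function theorem on a formulation of \eqref{problem1} that parametrises by the coefficient triple $(A,B,C)\in (L^{\infty})^{3}$ rather than $\bm{y}$ directly, and then to exploit the affine dependence of the coefficients on $\bm{y}$ for the final composition. Fix a real base parameter $\bm{y}^{*}\in U$ with ground eigenpair $(u^{*},\lambda^{*})$, and write $(A^{*},B^{*},C^{*})$ for the coefficients at $\bm{y}^{*}$. Working on the complexified spaces $H_{0}^{1}$ and $H^{-1}$ and using the $\mathbb{C}$-bilinear pairing $\langle f,g\rangle=\int fg$ (so that the normalisation $\langle v,v\rangle=1$ extends $\|v\|_{L^{2}}^{2}=1$ holomorphically into the complex regime), I would introduce
\begin{align*}
G(A,B,C,v,\mu):=\bigl(-\nabla\cdot(A\nabla v)+Bv-\mu\,Cv,\ \tfrac{1}{2}\langle v,v\rangle-\tfrac{1}{2}\bigr),
\end{align*}
which vanishes at $(A^{*},B^{*},C^{*},u^{*},\lambda^{*})$ and, being multilinear in its five arguments, is jointly holomorphic.

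The crux is to show that $D_{(v,\mu)}G$ at the base point is an isomorphism from $H_{0}^{1}\times\mathbb{C}$ onto $H^{-1}\times\mathbb{C}$. A direct computation gives
\begin{align*}
D_{(v,\mu)}G[(\tilde v,\tilde\mu)]=\bigl(L^{*}\tilde v-\tilde\mu\,C^{*}u^{*},\ \langle u^{*},\tilde v\rangle\bigr),
\end{align*}
with $L^{*}:=-\nabla\cdot(A^{*}\nabla)+B^{*}-\lambda^{*}C^{*}$. At the real base point $L^{*}$ is symmetric with respect to $\langle\cdot,\cdot\rangle$, Fredholm of index zero, and the ellipticity $A^{*}\geq\underline A$ together with the positivity $C^{*}\geq\underline C$ from Assumption \ref{assumption1} (combined with a Krein--Rutman argument) guarantees that the ground eigenvalue $\lambda^{*}$ is simple with $\ker L^{*}=\mathrm{span}_{\mathbb{C}}\{u^{*}\}$. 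For prescribed data $(g,s)\in H^{-1}\times\mathbb{C}$, the Fredholm alternative applied to $L^{*}\tilde v=g+\tilde\mu\,C^{*}u^{*}$ forces
\begin{align*}
\tilde\mu\,\langle u^{*},C^{*}u^{*}\rangle=-\langle u^{*},g\rangle,
\end{align*}
which uniquely determines $\tilde\mu$ because $\langle u^{*},C^{*}u^{*}\rangle\geq\underline C\,\langle u^{*},u^{*}\rangle=\underline C>0$. The equation for $\tilde v$ is then solvable modulo $\mathrm{span}\{u^{*}\}$, and the scalar constraint $\langle u^{*},\tilde v\rangle=s$ fixes the remaining degree of freedom, yielding invertibility.

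The analytic implicit function theorem now delivers a holomorphic branch $(A,B,C)\mapsto(u(A,B,C),\lambda(A,B,C))$ solving $G=0$ on an open neighborhood of $(A^{*},B^{*},C^{*})$ in $(L^{\infty})^{3}$, with radius controlled only by the uniform bounds $\overline A,\overline B,\overline C,\underline A,\underline C$ of Assumption \ref{assumption1}. Composing with the affine assignment $\bm{z}\mapsto(A(\bm{z}),B(\bm{z}),C(\bm{z}))$, which is linear and hence holomorphic in each $z_{j}$ separately into $(L^{\infty})^{3}$, I would obtain a separately holomorphic extension of $(u,\lambda)$ to a product set $\mathcal{O}=\prod_{j}\mathcal{O}_{j}$, where each $\mathcal{O}_{j}\subset\mathbb{C}$ is a thin open neighborhood of $[-1,1]$ chosen small enough that the coefficient triple $(A(\bm{z}),B(\bm{z}),C(\bm{z}))$ remains inside the IFT domain; by compactness of $[-1,1]$ and the uniqueness part of the IFT (the local branches must agree along the real axis with the genuine ground eigenpair), the local branches obtained at different real base points glue into a single global branch on $\mathcal{O}$. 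The main obstacle, as I see it, is the invertibility of the linearization---especially extracting simplicity of $\lambda^{*}$ and the nondegeneracy $\langle u^{*},C^{*}u^{*}\rangle\neq 0$ in the weighted setting---together with the uniformity estimate needed to make the IFT radius independent of $\bm{y}^{*}\in U$ so that a single product set $\mathcal{O}$ covers all of $U$.
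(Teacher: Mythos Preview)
Your argument is correct and follows the same core strategy as the paper---apply an analytic implicit function theorem to the eigenvalue equation and read off separate holomorphy in each coordinate---but you package it differently in two respects worth noting. First, the paper linearises directly in the scalar perturbation $\xi$ of the $j$-th coordinate (so the IFT is applied on $\mathbb{C}\times\mathbb{C}\times H_0^1$), whereas you linearise in the full coefficient triple $(A,B,C)\in(L^\infty)^3$ and then compose with the affine map $\bm{z}\mapsto(A(\bm{z}),B(\bm{z}),C(\bm{z}))$; your route is actually closer to the paper's Lemma~\ref{LEVP1} (continuity in the coefficients) than to its proof of Lemma~\ref{LEVP_separate}, and has the advantage that the IFT is invoked once rather than coordinate-by-coordinate. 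Second, the paper handles the normalisation by restricting the perturbation $w$ to the orthogonal complement $E=\mathrm{span}\{u\}^\perp$ and maps into $H^{-1}$ alone, while you keep $v\in H_0^1$ unrestricted and append the scalar constraint $\tfrac12\langle v,v\rangle=\tfrac12$, mapping into $H^{-1}\times\mathbb{C}$; these are the two standard equivalent ways of bordering the singular linearisation, and your Fredholm argument for invertibility is essentially the same content as the paper's bijectivity argument via the eigenbasis. Your explicit remark about gluing local branches over $[-1,1]$ by compactness and IFT-uniqueness is a point the paper leaves implicit.
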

 
 \begin{proof}
  A similar method with the proof of Lemma \ref{LEVP1} will be used to show the existence of the separately analytic extension of the ground eigenpair. Let $\bm{y}\in U$ and $j\geq 1$ be given. Then let $(u(\bm{y}),\lambda(\bm{y}))$ is the solution to \eqref{problem1}.  First, define a mapping on $\mathbb{C}^2\times H_0^1$ by
  \begin{align}
  \mathcal{N}(\xi,\mu,w):=
  \left(
  -\left(
  \nabla\cdot(A(\bm{y}+\xi \bm{e}_j)\nabla 
  \right)
  +B(\bm{y}+\xi \bm{e}_j)
  -
  (\lambda(\bm{y})+\mu)
  \right)C(\bm{y}+\xi \bm{e}_j)
  (u(\bm{y})+w).
  \end{align}
Note that $\mathcal{N}(0,0,0)=0$. With very similar computation with \eqref{Gderi}, the Gateaux derivative of $\mathcal{N}$ is obtained as follows:
\begin{align}
D_{(\mu,w)}\mathcal{N}(0,0,0)(\mu,w)
&=
\lim_{h\rightarrow 0}
\frac{N(0,h\mu,h w)-\mathcal{N}(0,0,0)}{h}
\notag\\
&=
(-\nabla\cdot(A(\bm{y})\nabla)+B(\bm{y})-C(\bm{y}) \lambda(\bm{y}))w-\mu C(\bm{y}) u(\bm{y})
.
\end{align}
 With the same argument with the proof of Lemma \ref{LEVP1},  $D_{\mu,w}\mathcal{N}(0,0,0):\mathbb{C}\times E\rightarrow H^{-1} $ is a bijection. Lastly, with similar computation with \eqref{inversebound}, there is a constant $R>0$ such that
\begin{align}
\| (D_{(\mu,w)}\mathcal{N}(0,0,0))^{-1}\psi  \|_{\mathbb{C}\times H_0^1}^2
&\leq R\|\psi\|_{H^{-1}}^2,
\end{align}
for any $\psi\in H^{-1}$.
 It shows that $(D_{(\mu,w)}\mathcal{N}(0,0,0))^{-1}$ is bounded so $(D_{(\mu,w)}\mathcal{N}(0,0,0))$ is an isomorphism. Therefore, $\mathcal{N}$ is Fr\'echet differentiable with the continuous inverse, and so, by the implicit function theorem, there exists $\mathcal{O}_j\subset \mathbb{C}$ such that there exist complex analytic functions $\mu:\mathcal{O}_j\rightarrow \mathbb{C}$ and $w:\mathcal{O}_j\rightarrow H_0^1$ of $\xi$, and $\mathcal{N}(\xi, \mu(\xi),w(\xi))=0$ for all $\xi\in \mathcal{O}_j$. In other words, $u(\bm{y}+\xi \bm{e}_j):=u(\bm{y})+w(\xi)$ and $\lambda(\bm{y}+\xi \bm{e}_j):=\lambda(\bm{y})+\mu(\xi)$ are the complex analytic function in $\xi$ and are the ground eigenpair. It concludes that the eigenpair has a separately complex analytic extension to $\mathcal{O}:= \prod_{j\geq 1}\mathcal{O}_j$.
 \end{proof}

  \subsubsection{The bound of parametric derivatives of the ground eigenpairs}
  \label{linbound}

This section is devoted to finding desired bound \eqref{deri-bound} with a proper sequence $\bm{\alpha}$ specified. First of all, with Lemma \ref{LEVP_separate} proven, taking derivatives is justified. Actually, the bound is well proved in \cite{alexey}, so it is sufficient to take the result from \cite{alexey} which is the form of $(|\bm{\nu}|!)\bm{\beta}^{\bm{\nu}}$ because $|\bm{\nu}|!=n!$ if $\bm{\nu}=n\bm{e}_j$ for any $j\geq 1$. Although the bound is already known, the purpose of showing the following proof is to emphasize the advantage of avoiding multi-index argument. As the result of avoiding the multi-index arguments, a different proof than the one in \cite{alexey} is presented.  Before finding the bounds, a technical lemma is presented.  \\
  
  \begin{lemma}
  \label{alpha_lemma}
  Define a sequence $\bm{\alpha}:=(\alpha_n)_{n\geq 0}$ with $\alpha_1=1$ by
  \begin{align}
 \label{alpha}
  \alpha_n
  =
  \sum_{m=1}^{n-1}
  \begin{pmatrix}
  n\\m
  \end{pmatrix}
  \alpha_{n-m}
 \alpha_m.
  \end{align}
  Then, for every $n\geq 1$, $n\alpha_{n-1}\leq \alpha_n$ and 
the sequence satisfies \eqref{radius}.
\end{lemma}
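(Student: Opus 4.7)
The plan is to obtain a closed form for $\alpha_n$ via an exponential generating function, and read off both claims from it. Set $\alpha_0:=0$, which is consistent with \eqref{alpha} since the sum defining $\alpha_n$ is empty at $n=1$ and the base case $\alpha_1=1$ must be imposed separately. Define $f(x):=\sum_{n\geq 1}\alpha_n x^n/n!$. The Cauchy product formula for exponential generating functions gives the coefficient of $x^n/n!$ in $f(x)^2$ as $\sum_{m=0}^{n}\binom{n}{m}\alpha_m\alpha_{n-m}$; the boundary terms $m=0,n$ vanish because $\alpha_0=0$, so by \eqref{alpha} this coefficient equals $\alpha_n$ for every $n\geq 2$, while the $x^0$ and $x^1$ coefficients of $f^2$ are zero. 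Hence $f(x)^2=f(x)-x$.

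The quadratic $f^2-f+x=0$ together with $f(0)=0$ has the unique solution $f(x)=(1-\sqrt{1-4x})/2$, the standard generating function of the shifted Catalan numbers. This yields the closed form $\alpha_n=n!\,C_{n-1}=\frac{(2n-2)!}{(n-1)!}$ for $n\geq 1$, where $C_k=\binom{2k}{k}/(k+1)$. From this one computes directly the ratio $\alpha_n/\alpha_{n-1}=2(2n-3)$ for $n\geq 2$, and the two claims of the lemma then reduce to trivial arithmetic. The inequality $n\alpha_{n-1}\leq \alpha_n$ is immediate at $n=1$ (both sides are bounded by $1$ since $\alpha_0=0$) and becomes $n\leq 2(2n-3)$ for $n\geq 2$, which holds for all such $n$ with equality at $n=2$. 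For the limit, $(n+1)\alpha_n/\alpha_{n+1}=(n+1)/(2(2n-1))\to 1/4$ as $n\to\infty$, which simultaneously verifies the existence of the limit in \eqref{radius} and identifies the value $\varepsilon=1/4$ used in Corollary \ref{LEP}.

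The only step requiring any care is the derivation of the functional equation, specifically confirming that the convention $\alpha_0=0$ is consistent with the stated recursion so that the Cauchy product truly reproduces the right-hand side of \eqref{alpha}, and that the branch of the square root is the one with $f(0)=0$. I do not anticipate any substantive obstacle: once the closed form $\alpha_n=(2n-2)!/(n-1)!$ is in hand, both assertions are one-line consequences of the ratio $\alpha_n/\alpha_{n-1}=2(2n-3)$, so no induction or multi-index bookkeeping is required.
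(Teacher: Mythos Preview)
Your proposal is correct. Both you and the paper arrive at the closed form $\alpha_n=(2n-2)!/(n-1)!$ and then reduce the two claims to elementary arithmetic with the ratio $\alpha_n/\alpha_{n-1}=2(2n-3)$. The difference lies in how that closed form is obtained: the paper verifies it by induction, rewriting the convolution sum as $N!\sum_{j=0}^{N-2}C_jC_{N-2-j}$ and invoking the Catalan recurrence $C_{N-1}=\sum_{j=0}^{N-2}C_jC_{N-2-j}$, whereas you derive it via the exponential generating function, turning the recursion into the algebraic equation $f^2=f-x$ and reading off $f(x)=(1-\sqrt{1-4x})/2$. Your route is more self-contained (it does not presuppose the Catalan convolution identity) and explains where the formula comes from rather than merely checking it; the paper's route is more elementary in the sense that it avoids power-series manipulations and branch choices. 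Either way, once the closed form is in hand the remaining steps are identical in spirit, and your treatment of the boundary case $n=1$ via $\alpha_0=0$ is consistent with the recursion and with the paper's indexing.
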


\begin{proof}
First of all, for given $n\geq 1$,
note that the sequence defined by \eqref{alpha} satisfies the following formula
\begin{align}
\label{OESI}
\alpha_n=\sum_{k=1}^{n-1}
\begin{pmatrix}
n\\
k
\end{pmatrix}
\alpha_k
\alpha_{n-k}
=
\frac{(2(n-1))!}{(n-1)!}
\end{align}
for all $n\geq 1$.  The reference to this equality can be found in OESI (A001813), and the name of the sequence is a Quadruple factorial number. This equality can be found in the formula section. Indeed, it can be shown with a simple inductive argument. Firstly, when $n=1$, 
\begin{align}
1=\alpha_1  = \frac{[2(1-1)]!}{(1-1)!}.
\end{align}
Assuming that \eqref{OESI} holds for all $n< N$, observe that
\begin{align}
\sum_{k=1}^{N-1} 
\begin{pmatrix}
N\\
k
\end{pmatrix}
\alpha_k
\alpha_{N-k}
&=
\sum_{k=1}^{N-1} 
\frac{N!}{(N-k)!k!}
\frac{[2(N-k-1)]!}{(N-k-1)!} \frac{[2(k-1)]!}{(k-1)!}
\notag\\
&=N! \sum_{k=1}^{N-1}
\frac{[2(N-k-1)!]}{(N-1-k)!(k-1)!} \frac{[2(k-1)]!}{(k-1)!k!}
\notag\\
&=
N! \sum_{k=1}^{N-1}
C_{N-1-k}C_{k-1}
\notag\\
&=
N! C_{N-1}
\notag\\
&=
N!
\frac{[2(N-1)!]}{(N-1)!N!}
\notag\\
&=
\frac{[2(N-1)]!}{(N-1)!},
\end{align} 
where $C_n$ is $n$th Catalan number(sequence A000108 in the OEIS) and the third, the fourth and the fifth equalities are from the properties of Catalan number. Then observe that
\begin{align}
n\alpha_{n-1}
=
n
\frac{(2(n-2))!}{(n-2)!}
=
\frac{n(n-1)}{(2n-2)(2n-3)}
\frac{(2(n-1))!}{(n-1)!}
=
\frac{n(n-1)}{(2n-2)(2n-3)}
\alpha_n.
\end{align}
With a fundamental computation, it is easy to check
\begin{align}
g(x)=\frac{x(x-1)}{(2x-3)(2x-4)}
\end{align}
is a decreasing function with $g(2)=1$. Therefore, for every $n\geq 2$, $g(n)\leq 1$, so the inequality $n\alpha_{n-1}\leq \alpha_n$ is concluded. Lastly, with the formula \eqref{OESI}, observe that
\begin{align}
\label{vep}
\lim_{n\rightarrow \infty}
\frac{\alpha_n(n+1)}{\alpha_{n+1}}
=
\lim_{n\rightarrow \infty}
\frac{\frac{(2(n-1))!}{(n-1)!}}{\frac{(2n))!}{(n)!}}(n+1)
=
\lim_{n\rightarrow \infty}
\frac{n(n+1)}{(2n)(2n-1)}=\frac{1}{4}>0,
\end{align} 
as desired.
\end{proof}

With the numerical sequence presented above, it is now possible to show the desired bound for the parametric derivatives of the ground eigenpairs.

\begin{lemma}
\label{LEVP_bound}
Suppose $(\lambda(\bm{y}),u(\bm{y}))$ is the solution to the EVP \eqref{problem1} under Assumption \ref{assumption1}. Then, with $\overline{\lambda}=\sup_{\bm{y}\in U} |\lambda(\bm{y})|$ and $\overline{u}=\sup_{\bm{y}\in U}\|u\|_{H_0^1}$, the following bounds hold:
\begin{align}
\label{LEVPbound}
\left| \partial_j^n \lambda(\bm{y}) \right| 
&\leq
\overline{\lambda} \alpha_n  b_j^n,
 \\
 \left\| \partial_j^n u(\bm{y}) \right\|_{H_0^1} 
 &\leq
  \overline{u} \alpha_n b_j^n,
\end{align}
where $\bm{b}:=  \zeta \bm{c} $ with $ c_j:= \max\{\|A_j\|_{L^\infty}, \| B_j  \|_{L^\infty}, \| C_j  \|_{L^\infty}  \}$ and some constant $\zeta>1$.
\end{lemma}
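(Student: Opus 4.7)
My plan is induction on $n\geq 0$, with the base case handled by definition of $\overline{\lambda}$ and $\overline{u}$ (reading $\alpha_0:=1$). For the inductive step I would start from the weak form of \eqref{problem1},
\begin{align*}
a_{\bm{y}}(u,v):=\int A(\bm{y})\nabla u\cdot\nabla v+\int B(\bm{y})uv=\lambda(\bm{y})\int C(\bm{y})uv,
\end{align*}
valid for every $v\in H_0^1$. Applying $\partial_j^n$ to both sides and using the crucial observation that $\partial_j^kA=\partial_j^kB=\partial_j^kC=0$ for $k\geq 2$ (affine dependence), Leibniz collapses to a short identity of the form
\begin{align*}
a_{\bm{y}}(\partial_j^n u,v)-\lambda\int C\,\partial_j^n u\,v-\partial_j^n\lambda\int Cuv=F_n(v),
\end{align*}
where the functional $F_n(v)$ is an explicit finite sum of two kinds of terms: quadratic products $\partial_j^k\lambda\cdot\partial_j^{n-k}u$ for $1\leq k\leq n-1$ (with combinatorial weights $\binom{n}{k}$), and linear correction terms with a single factor $A_j,B_j$ or $C_j$ multiplying $\partial_j^{n-1}u$ or $\partial_j^{n-1}\lambda$ with prefactor $n$.

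To extract $\partial_j^n\lambda$, I would test the identity with $v=u(\bm{y})$. By symmetry of $a_{\bm{y}}$ and the eigenvalue equation, $a_{\bm{y}}(\partial_j^n u,u)=a_{\bm{y}}(u,\partial_j^n u)=\lambda\int Cu\,\partial_j^n u$, so the first two terms on the left cancel, leaving $\partial_j^n\lambda\int Cu^2=-F_n(u)$. Since $\int Cu^2\geq\underline{C}\|u\|_{L^2}^2=\underline{C}$ by normalization, this yields $|\partial_j^n\lambda|\leq|F_n(u)|/\underline{C}$. To bound $\partial_j^n u$ in $H_0^1$, I would split $\partial_j^n u=\tau_n u+r_n$ with $r_n\perp u$ in $L^2$. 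The scalar $\tau_n=\langle\partial_j^n u,u\rangle$ is controlled by differentiating the normalization $\|u\|_{L^2}^2=1$ exactly $n$ times, giving $2\tau_n=-\sum_{k=1}^{n-1}\binom{n}{k}\int\partial_j^k u\cdot\partial_j^{n-k}u$, which is purely quadratic in the inductive data. For $r_n$, testing the identity against $r_n$ and using coercivity of the shifted form $w\mapsto a_{\bm{y}}(w,w)-\lambda\int Cw^2$ on $u^\perp$ (a uniform consequence of the spectral gap between the ground and second eigenvalue together with $\underline{A}>0$) gives $\|r_n\|_{H_0^1}\lesssim\|F_n\|_{H^{-1}}$.

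To close the recursion, each quadratic product $|\partial_j^k\lambda|\cdot|\partial_j^{n-k}u|$ is inductively bounded by $\overline{\lambda}\,\overline{u}\,\alpha_k\alpha_{n-k}b_j^n$, and the combinatorial weights $\binom{n}{k}$ combine with the defining identity \eqref{alpha} so that these contributions total exactly $\overline{\lambda}\,\overline{u}\,\alpha_n b_j^n$. Each linear correction contributes at most $Cn\,\overline{u}\,\alpha_{n-1}\|A_j\|_{L^\infty}b_j^{n-1}$ (or the analogous expression with $B_j,C_j$); since by construction $\|A_j\|_{L^\infty},\|B_j\|_{L^\infty},\|C_j\|_{L^\infty}\leq c_j=b_j/\zeta$, each such term is bounded by $C(n\alpha_{n-1}/\zeta)b_j^n$, which in turn is at most $C\alpha_n b_j^n/\zeta$ by $n\alpha_{n-1}\leq\alpha_n$ from Lemma \ref{alpha_lemma}. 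Choosing $\zeta>1$ large enough to absorb $\overline{A},\overline{B},\overline{C},\overline{\lambda},\overline{u}$, the coercivity constant, and the Poincar\'e constant completes the induction.

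The main obstacle I expect is the careful bookkeeping in the derivation of $F_n$ and its decomposition into quadratic and linear parts, together with securing uniform coercivity of $a_{\bm{y}}(\cdot,\cdot)-\lambda\int C\cdot\,\cdot$ on $u^\perp$ for all $\bm{y}\in U$. This latter point depends on uniform control of the spectral gap, which is guaranteed by Assumption \ref{assumption1} and is precisely what makes the choice of $\zeta$ effective; without it, the linear correction terms could not be absorbed and the bound would degrade to the weaker form $(n!)^{1+\varepsilon}\bm{b}^{\bm{\nu}}$ of \cite{alex2019}.
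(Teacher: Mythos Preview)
Your overall architecture matches the paper's: differentiate the weak form (so only first derivatives of $A,B,C$ survive), test with $v=u$ to isolate $\partial_j^n\lambda$, control $\partial_j^n u$ via the orthogonal splitting and coercivity on $u^\perp$, and close using $n\alpha_{n-1}\le\alpha_n$ together with the recurrence \eqref{alpha}. The paper in fact imports exactly the inequalities your derivation would produce, citing them as (5.6) and (5.16) of \cite{alexey}.

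There is, however, a genuine gap in your closing step. You correctly note that the quadratic contributions sum to a constant multiple of $\overline{\lambda}\,\overline{u}\,\alpha_n b_j^n$ (and, after the test with $u$ and division by $\int Cu^2\ge\underline{C}$, pick up further fixed factors like $\overline{u}\,\overline{C}/\underline{C}$). But the target for $\partial_j^n\lambda$ is $\overline{\lambda}\,\alpha_n b_j^n$, so the excess is a \emph{fixed} constant that does not involve $\zeta$: in the product $\partial_j^k\lambda\cdot\partial_j^{n-k}u$ the powers of $\zeta$ are $\zeta^k\cdot\zeta^{n-k}=\zeta^n$, exactly matching the $\zeta^n$ already present in $b_j^n=(\zeta c_j)^n$. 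Enlarging $\zeta$ therefore buys you a factor $1/\zeta$ only on the \emph{linear} correction terms (which carry one bare $c_j=b_j/\zeta$), and does nothing for the quadratic ones. As written, the induction does not close unless you happen to have $\overline{u}^2\overline{C}/\underline{C}$ (and the analogous constants in the $u$-equation) below $1$.

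The paper's remedy is to strengthen the inductive hypothesis by inserting an extra prefactor: one proves
\[
|\partial_j^n\lambda|\le \frac{\overline{\lambda}\,\alpha_n}{\rho}(\sigma\rho\,c_j)^n,
\qquad
\|\partial_j^n u\|_{H_0^1}\le \frac{\overline{u}\,\alpha_n}{\rho}(\sigma\rho\,c_j)^n,
\]
with two free constants $\sigma,\rho\ge1$. In a quadratic product the prefactors multiply to $1/\rho^2$, yielding a surplus $1/\rho$ over the target that absorbs the fixed constants; the linear terms are handled by $\sigma$ exactly as you intended with $\zeta$. Once this stronger statement is established, the lemma follows with $\zeta=\sigma\rho$ since $\rho\ge1$. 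Everything else in your outline (including the use of $n\alpha_{n-1}\le\alpha_n$ for the linear pieces and of \eqref{alpha} for the quadratic sum) goes through unchanged.
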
 

\begin{proof}
The existence of $\overline{\lambda}$ and $\overline{u}$ is clear due to the compactness of $U$ and the continuous of $(\lambda,u)$ with Heine-Borel Theorem. The first goal is to show 
\begin{align}
\label{lbound}
|\partial_j^n \lambda(\bm{y})|
&\leq  \frac{\overline{\lambda} \alpha_n}{\rho} (\sigma\rho c_j )^n,
\end{align}
and
\begin{align}
\label{ubound}
\left\|\partial_j^n u(\bm{y})\right\|_{H_0^1}
&\leq  \frac{\overline{u} \alpha_n}{\rho} (\sigma\rho c_j )^n,
\end{align}
by the mathematical induction. The constants $\rho$ and $\sigma$ are defined by
\begin{align}
\sigma
:=
\max
\left\{1,
\frac{(2+\overline{\lambda})}{\underline{A}}
,
\left(
2+
\overline{\lambda}
+\frac{ C_{\mu}\overline{u}}{2}
\right)
\right\},
\end{align}
and
\begin{align}
\rho
:=
\max
\left\{
1,
\left(
\frac{(2+\overline{\lambda})}{ \underline{A}}
+
\frac{\overline{\lambda} 
\overline{C}}{\underline{A}}
+
\frac{\overline{\lambda}}{ \underline{A}}
\right)
,
\frac{1}{C_{\mu}}
\left(
\frac{1}{\sigma}
\left(
2
+
\overline{\lambda}
+
\frac{C_{\mu}}{2}
\overline{u}
\right)
+
\left(
\overline{\lambda}
+
\frac{C_{\mu}}{2}
\overline{u}^2
\right)
\left(
\overline{C}
+
\frac{1}{\sigma}
\right)
\right)
\right\},
\end{align}
where the constant $C_\mu>0$ is defined in Lemma 4.2 in \cite{alexey}.
 For simplicity, some preliminary results before the induction process from \cite{alexey} will be taken.
Note that, by Assumption \ref{assumption1}, Assumption 3.1 in \cite{alexey} is satisfies with the coefficient functions $A,B$ and $C$ in \eqref{problem1} with $\frac{1}{R_j}=c_j$ and $\delta=1$. Then note that $\partial^{\bm{\nu}}=\partial^{n\bm{e}_j}=\partial_j^n$. From (5.6) in \cite{alexey}, with Assumption \ref{assumption1}, the following bounds for derivative of eigenvalue $\lambda(\bm{y})$ hold:
\begin{align}
\label{l1}
|\partial_j^n \lambda|
&\leq
\bigg(
n
\|\partial^{n-1}u\|_{H_0^1}
\left(
\|A_j\|_{L^\infty}
+
\|B_j\|_{L^\infty}
+
\lambda \|C_j\|_{L^\infty}
\right)
\notag\\
&+
\sum_{m=1}^{n-1}
\begin{pmatrix}
n\\m
\end{pmatrix}
|\partial^{n-m} \lambda|
\left(
\|\partial^{m} u\|_{H_0^1}
\| C\|_{L^\infty}
+
m\|\partial^{m-1} u\|_{H_0^1}
\| C_j\|_{L^\infty}
\right)
\bigg)\|u\|_{H_0^1}.
\end{align}
For $n=1$, the bound \eqref{l1} is
\begin{align}
|\partial_j \lambda|
&\leq 
\left(
\|u\|_{H_0^1}
\left( \|A_j\|_{L^\infty}+\|B_j\|_{L^\infty}+\overline{\lambda} \|C_j\|_{L^\infty} \right)
\right)\|u\|_{H_0^1}
\notag\\
&
\leq 
\overline{u}^2 (2+\overline{\lambda})c_j
\notag\\
&=
\frac{(2+\overline{\lambda})}{\underline{A}}
\frac{\overline{\lambda} \alpha_1}{\rho}(\rho c_j)
\notag\\
&\leq
\frac{\overline{\lambda} \alpha_1}{\rho}(\rho \sigma c_j),
\end{align}
Assuming \eqref{lbound} and \eqref{ubound} for all derivative order smaller than $n$, \eqref{l1} can be rewritten by 
\begin{align}
\label{l2}
|\partial_j^n \lambda(\bm{y})|
&
\leq
\bigg(
n \frac{\overline{u} \alpha_{n-1}}{\rho} (\sigma\rho c_j)^{n-1}(2+\overline{\lambda})c_j
\notag\\
&+
\frac{1}{\rho^2}
\sum_{m=1}^{n-1}
\begin{pmatrix}
n\\m
\end{pmatrix}
\overline{\lambda}
\alpha_{n-m}
(\sigma\rho c_j)^{n-m}
\left(
\overline{u} \alpha_{m}
(\sigma\rho c_j)^{m} \overline{C}
+
m\overline{u}\alpha_{m-1}
(\sigma\rho c_j)^{m-1} c_j
\right)
\bigg)
\overline{u}
\end{align}
With some algebraic manipulation, the bound in \eqref{l2} can be further simplified to
\begin{align}
\label{l3}
\bigg(
n\alpha_{n-1}
\frac{
(2+\overline{\lambda})
}{\sigma\rho}
+
\frac{\overline{\lambda}\overline{C}}{\rho}
\sum_{m=1}^{n-1}
\begin{pmatrix}
n\\m
\end{pmatrix}
\alpha_{n-m} \alpha_m
 +
\frac{\overline{\lambda}}{\sigma\rho}
 \sum_{m=1}^{n-1}
\begin{pmatrix}
n\\m
\end{pmatrix}
\alpha_{n-m} m\alpha_{m-1}
\bigg)
\frac{\overline{u}^2}{\rho}(\sigma\rho c_j)^n.
\end{align}
By using the fundamental fact about the sequence, $n\alpha_{n-1}\leq\alpha_n$, and the definition of $\alpha_n$ from Lemma \ref{alpha_lemma}, it is  easy to see that
\begin{align}
|\partial_j^n \lambda(\bm{y})|
&\leq 
\left(
\frac{(2+\overline{\lambda})}{ \sigma\underline{A}}
+
\frac{\overline{\lambda} 
\overline{C}}{\underline{A}}
+
\frac{\overline{\lambda}}{\sigma \underline{A}}
\right)
\frac{
\overline{\lambda}
\alpha_n
}{\rho^2}
 (\sigma\rho c_j)^{n}
\leq 
\frac{
\overline{\lambda}
\alpha_n
}{\rho}
(\sigma\rho c_j)^n,
\end{align}
as desired from \eqref{lbound}. Similarly, take the following bound from (5.16) in \cite{alexey} with Assumption \ref{assumption1}:
\begin{align}
\label{u1}
&\|\partial^n_j u \|_{H_0^1}
\leq 
\frac{1}{C_{\mu}}\bigg(
n
\|\partial^{n-1}u\|_{H_0^1}
\left(
\|A_j\|_{L^\infty} 
+
\|B_j\|_{L^\infty}
+
\left(
\overline{\lambda}
+
\frac{C_{\mu}}{2}
\overline{u}
\right)
\|C_j\|_{L^\infty}
\right)+
\notag \\
&
\sum_{m=1}^{n-1}
\begin{pmatrix}
n\\m
\end{pmatrix}
\left(
|\partial^{n-m}\lambda|
+
\frac{C_{\mu}}{2}
\overline{u}
\|\partial^{n-m}u\|_{H_0^1}
\right)
\left(
\|\partial^{m} u\|_{H_0^1}\|C\|_{L^\infty}
+
m
\|\partial^{m-1}u\|_{H_0^1}
\|C_j\|_{L^\infty}
\right)
\bigg).
\end{align}
When $n=1$, bound \eqref{u1} is
\begin{align}
\|\partial_j u\|_{H_0^1}
&\leq 
\frac{1}{C_{\mu}}
\|u\|_{H_0^1}
\left(
\|A_j\|_{L^\infty} 
+
\|B_j\|_{L^\infty}
+
\left(
\overline{\lambda}
+
\frac{C_{\mu}}{2}
\overline{u}
\right)
\|C_j\|_{L^\infty}
\right)
\notag\\
&\leq
\frac{\overline{u}}{C_{\mu}}
\left(
2+
\overline{\lambda}
+\frac{ C_{\mu}\overline{u}}{2}
\right)c_j
\notag\\
&=
\frac{\overline{u}}{\rho}(\sigma \rho c_j).
\end{align}
Now, take the inductive assumptions, \eqref{lbound} and \eqref{ubound}. Then \eqref{u1} is simplified to 
\begin{align}
&
\frac{1}{C_{\mu}}\bigg(
n
\frac{\overline{u}\alpha_{n-1}}{\rho}(\sigma\rho c_j)^{n-1}
\left(
2
+
\left(
\overline{\lambda}
+
\frac{C_{\mu}}{2}
\overline{u}
\right)
\right)
c_j
\notag\\
&+
\sum_{m=1}^{n-1}
\begin{pmatrix}
n\\m
\end{pmatrix}
\alpha_{n-m}
(\sigma\rho c_j)^{n-m}
\left(
\frac{\overline{\lambda}}{\rho}
+
\frac{C_{\mu}}{2\rho }
\overline{u}^2
\right)
\left(
\frac{\overline{u}\alpha_{m}}{\rho}
(\sigma\rho c_j)^{m}
\overline{C}
+
m
\frac{\overline{u}
\alpha_{m-1}}{\rho}
(\sigma\rho c_j)^{m-1}
c_j
\right)
\bigg).
\end{align}
With some algebraic manipulations and with some fundamental facts about the sequence $\alpha_n$ from Lemma \ref{alpha_lemma}, 
\begin{align}
\|\partial_{j}^n u\|_{H_0^1}
&\leq
\frac{1}{C_{\mu}}
\left(
\frac{1}{\sigma}
\left(
2
+
\overline{\lambda}
+
\frac{C_{\mu}}{2}
\overline{u}
\right)
+
\left(
\overline{\lambda}
+
\frac{C_{\mu}}{2}
\overline{u}^2
\right)
\left(
\overline{C}
+
\frac{1}{\sigma}
\right)
\right)
\frac{\overline{u}\alpha_n}{\rho^2} (\sigma\rho c_j)^{n}
\notag\\
&
\leq
\frac{\overline{u}\alpha_n}{\rho}(\sigma\rho c_j)^n.
\end{align}
Therefore, by mathematical induction, the bounds \eqref{lbound} and \eqref{ubound} hold. Furthermore, recalling that $\rho,\sigma\geq 1$,  the bounds in \eqref{LEVPbound} easily follow from \eqref{lbound} and \eqref{ubound}. 
\end{proof}

\subsubsection{The parametric continuity of the ground eigenpairs}
\label{lincontinuity}

This subsection is devoted to showing the continuity of the ground eigenpair on the parameters in $H_{\bm{\gamma}}$.    First, the continuity of the ground eigenpair on coefficient function will be shown. Then, it will be shown that the condition on the sequence $\bm{\gamma}$ in \eqref{Gamm} is enough to ensure that the image of $H_{\bm{\gamma}}$ under the the mapping to the coefficient functions is contained in the region where the ground eigenpair is continuous.  In other words, the mapping $H_{\bm{\gamma}}\rightarrow \mathbb{C}\times H_0^1$ will be considered as a composition of two mapping, parameter to coefficient functions and coefficient functions to the eigenpairs. The following first lemma is about mapping from coefficient functions to the eigenpairs.
\\

\begin{lemma}
\label{LEVP1}
Consider a set $\mathcal{Q}\subset (L^\infty)^3:= L^\infty\times L^\infty \times L^\infty $ defined by
\begin{align}
\label{Q}
\mathcal{Q}:=\{(A,B,C): Re(C)\geq \underline{C}, Re(A)\geq \underline{A} \text{, } \exists (\lambda,u)\in \mathbb{C} \times H_0^1 \text{ }  \text{ satisfying } 
\eqref{nonp}
\},
\end{align} 
where $Re(C)$ and $Re(A)$ is the real parts of the coefficient function $C$ and $A$, and the constants  $\underline{A},\underline{C}>0$ are defined in \eqref{underline}. Then the ground eigenpair $(\lambda,u)\in \mathbb{C}\times H_0^1$ of the following non-parametric problem:
\begin{align}
\label{nonp}
\begin{cases}
-\nabla \cdot (A(x)\nabla)u(x)+B(x)u(x)=\lambda C u(x), &x\in \Omega,\\
u(x)=0, &x\in \partial \Omega,
\end{cases}
\end{align}
continuously depends on $(A,B,C)$ in $ \mathcal{Q}$.
\end{lemma}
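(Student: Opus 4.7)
The plan is to apply the Banach space version of the implicit function theorem, in the same spirit as Lemma \ref{LEVP_separate}, but now letting the coefficient triple $(A,B,C)$ itself play the role of the varying parameter instead of the scalar $\xi$. Fix an arbitrary base triple $(A_0,B_0,C_0)\in\mathcal{Q}$ together with its ground eigenpair $(\lambda_0,u_0)\in\mathbb{C}\times H_0^1$, normalized so that $\langle u_0,u_0\rangle=1$. I would then define
\begin{align}
\mathcal{N}:(L^\infty)^3\times\mathbb{C}\times H_0^1\rightarrow H^{-1}\times\mathbb{C}
\end{align}
by
\begin{align}
\mathcal{N}(A,B,C,\mu,w):=\bigl(-\nabla\cdot(A\nabla)(u_0+w)+B(u_0+w)-(\lambda_0+\mu)C(u_0+w),\,\langle w,u_0\rangle\bigr),
\end{align}
so that $\mathcal{N}(A_0,B_0,C_0,0,0)=0$. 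The second coordinate enforces a normalization condition that removes the scaling freedom and therefore eliminates the only source of non-uniqueness of the eigenpair.

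Next I would compute the partial Fr\'echet derivative at the base point,
\begin{align}
D_{(\mu,w)}\mathcal{N}(A_0,B_0,C_0,0,0)(\mu,w)=\bigl(T_0 w-\mu C_0 u_0,\,\langle w,u_0\rangle\bigr),
\end{align}
where $T_0:=-\nabla\cdot(A_0\nabla)+B_0-\lambda_0 C_0:H_0^1\rightarrow H^{-1}$, and verify that this linear map is an isomorphism from $\mathbb{C}\times H_0^1$ onto $H^{-1}\times\mathbb{C}$. Simplicity of the ground eigenvalue gives $\ker T_0=\mathrm{span}\{u_0\}$, and the coercivity implied by $\mathrm{Re}(A_0)\geq\underline{A}>0$ makes $T_0$ Fredholm of index zero, so the Fredholm alternative is available. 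Exploiting that $\langle C_0 u_0,u_0\rangle\neq 0$, guaranteed by $\mathrm{Re}(C_0)\geq\underline{C}>0$ together with the normalization, I would solve first for $\mu$ by pairing the equation with $u_0$, then recover the component of $w$ transverse to $\mathrm{span}\{u_0\}$ via Fredholm solvability, and finally fix the $u_0$-component of $w$ using the scalar constraint. Standard elliptic a priori estimates will then yield a bounded inverse.

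With the derivative shown to be an isomorphism and $\mathcal{N}$ polynomial (hence smooth) in its arguments, the Banach implicit function theorem produces a neighborhood $\mathcal{V}\subset(L^\infty)^3$ of $(A_0,B_0,C_0)$ and continuous maps $\mu:\mathcal{V}\rightarrow\mathbb{C}$, $w:\mathcal{V}\rightarrow H_0^1$ uniquely solving $\mathcal{N}(A,B,C,\mu,w)=0$. Setting $\lambda:=\lambda_0+\mu$ and $u:=u_0+w$ then gives the desired continuous local branch around the chosen base point, and since that base point is arbitrary in $\mathcal{Q}$, these local branches patch together into continuity of $(A,B,C)\mapsto(\lambda,u)$ on all of $\mathcal{Q}$.

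The main obstacle will be verifying that the derivative is a bijection with bounded inverse in the complex-valued setting, since the Kato-type spectral perturbation arguments used in earlier works rely on self-adjointness which is lost once the coefficients become complex. The essential inputs will be simplicity of the ground eigenvalue (inherited from the real problem by continuity within $\mathcal{Q}$), the uniform lower bounds $\mathrm{Re}(A)\geq\underline{A}$ and $\mathrm{Re}(C)\geq\underline{C}$ built into the definition of $\mathcal{Q}$, and a direct Fredholm decomposition of $T_0$ in place of spectral perturbation.
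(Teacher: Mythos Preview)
Your proposal follows the same overall strategy as the paper---an application of the Banach implicit function theorem around an arbitrary base triple in $\mathcal{Q}$, then patching local branches---but differs in two technical choices. First, the paper eliminates the scaling degeneracy by restricting the domain of $w$ to $E:=\mathrm{span}_{H_0^1}\{u\}^\perp$ from the outset, so that $\mathcal{N}$ maps into $H^{-1}$ alone; you instead keep $w\in H_0^1$ and append the scalar constraint $\langle w,u_0\rangle$, obtaining a bordered system with target $H^{-1}\times\mathbb{C}$. These are equivalent formulations. Second, and more substantively, the paper proves invertibility of the linearization by invoking the orthogonal eigenfunction basis $(u_n)_{n\geq 0}$ and the identity $\mathcal{L}'u_n=(\lambda_n-\lambda)Cu_n$, which is really a self-adjoint argument; you propose a Fredholm index-zero argument combined with simplicity of the ground eigenvalue. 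Your route is arguably cleaner in the complex-coefficient setting of $\mathcal{Q}$, where a full eigenfunction basis is not guaranteed, and it makes explicit where the hypotheses $\mathrm{Re}(A)\geq\underline{A}$ and $\mathrm{Re}(C)\geq\underline{C}$ enter (coercivity for the Fredholm property, and $\langle C_0 u_0,u_0\rangle\neq 0$ for solvability of $\mu$). Both approaches ultimately yield the same local continuous branch and the same global conclusion.
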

\begin{proof}
The implicit function theorem will be used. First, with $E=span_{H_0^1}\{u\}^{\perp}$, define a mapping $\mathcal{N}:\mathcal{Q}\times \mathbb{C}\times E$ by 
\begin{align}
\mathcal{N}(A',B',C',\mu,w) : = (-\nabla\cdot ((A+A')\nabla)+(B+B')-(C+C')(\lambda+\mu))(u+w),
\end{align}
then $\mathcal{N}(0,0,0,0,0)=0$ because $(\lambda,u)$ is the ground-eigenpair of \eqref{nonp}.
The Gateaux derivative of $\mathcal{N}$ with respect to $\mathbb{C}\times E$ is calculated as below: 
\begin{align}
\label{Gderi}
D_{(\mu,w)}\mathcal{N}(0,0,0,0,0)(\mu,w)
&=
\lim_{h\rightarrow 0}
\frac{\mathcal{N}(0,0,0,h\mu,hw)-N(0,0,0,0,0)}{h}
\notag\\
&=
\lim_{h\rightarrow 0}
\frac{(-\nabla\cdot (A\nabla)+B-C(\lambda+h\mu))(u+hw)}{h}
\notag\\
&=
(-\nabla\cdot (A\nabla)+B-C\lambda)w-\mu C  u,
\end{align}
where the second equality is because $\mathcal{N}(0,0,0,0)=0$. Note that the operator $\mathcal{D}:=D_{(\mu,w)}\mathcal{N}(0,0,0,0): \mathbb{C}\times E\rightarrow H^{-1}$ is a bijection because $\mathcal{L}':=(-\nabla\cdot (A\nabla)+B-C \lambda) :E\rightarrow E'$ is bijective where $E'=span_{H^{-1}}\{Cu\}^{\perp}$. In order to see this, recall that (for example, explained in \cite{alex2019}) there is a sequence of eigenpairs $((\lambda_n,u_n))_{n\geq 0}$ such that $0<\lambda_0 \leq \lambda_1 \leq \cdots $ with $(\lambda_0,u_0)=(\lambda,u)$ of problem \eqref{nonp} and the sequence $(u_n)_{n\geq 0}$ is the orthogonal basis of $H_0^1$. Then recalling that $H_0^1$ is dense in $H^{-1}$ and $(u_n)_{n\geq 1}$ is an orthogonal basis of $E$, with the equation $\mathcal{L}'u_n = (\lambda_n-\lambda)Cu_n\in E'$ for all $n\geq 1$, it is clear that $\mathcal{L}'(E)=span\{Cu_n\}=E'$ due to the strict positivity of $C$ in the definition of \eqref{Q}. Since the kernel of $\mathcal{L}'$ is trivial, $\mathcal{L}':E\rightarrow E'$ is bijective. Note that the second term $\mu Cu$ spans $(E')^{\perp}$. Thus the image of $\mathcal{D}$ is $(E')\oplus(E')^\perp = H^{-1}$, and this proves the bijectivity. Also, recall that $\mathcal{L}'$ has continuous inverse $(\mathcal{L}')^{-1}:E'\rightarrow E$ from elementary PDE theory. Now, it is clear to see that $\mathcal{D}^{-1}$ is bounded. Indeed, for any $\psi \in H^{-1}$, there exists $(\mu,w)\in \mathbb{C}\times E$ such that $\mathcal{D}(\mu,w)=\psi$, and observe that, with varying constant $R>0$,
\begin{align}
\label{inversebound}
\|\mathcal{D}^{-1} \psi  \|_{\mathbb{C}\times H_0^1}
&=
|\mu |^2 + \|w\|_{H_0^1}^2
\notag\\
&=
 \frac{1}{\|Cu  \|_{H^{-1}}} \|\mu Cu  \|_{H^{-1}}^2
 + 
 \|w  \|_{H_0^1}^2
\notag \\
&\leq
 \frac{1}{\|Cu  \|_{H^{-1}}} \|\mu u  \|_{H^{-1}}^2
 + 
 R\|\mathcal{L}' w  \|_{H^{-1}}^2
\notag \\
 &
 \leq 
 R\left(  \|\mu C u  \|_{H^{-1}}  
 +
 \|\mathcal{L}' w  \|_{H^{-1}}^2
  \right)
\notag  \\
  &=
  R
  \left(
  \| \mathcal{L}'w - \mu Cu\|_{H^{-1}}^2
  \right)
  \notag\\
  &=
  R\|\mathcal{D}(\mu, w)  \|_{H^{-1}}^2
\notag  \\
  &=
  R\|\psi\|_{H^{-1}}^2,
\end{align}
where the second equality is because $\|Cu(\bm{y})\|_{H^{-1}}\neq 0$ as $\|u(\bm{y})\|_{L^2}=1$ and by the given condition that $Re(C)(x)\geq \underline{C}>0$ for all $x\in \Omega$, the first inequality is because $\mathcal{L}':E\rightarrow E'$ has bounded inverse, the third equality is because $u(\bm{y})$ and $\mathcal{L}'w\in E'$ are orthogonal by the definition of $E'$. Then, by Banach isomorphism theorem, $\mathcal{D}$ is bounded, and so $\mathcal{N}$ is Fr\'echet differentiable. Then, by the implicit function theorem, because $\mathcal{D}^{-1}$ is bounded, there exists an open neighborhood $O_{(A,B,C)}\subset \mathcal{Q}$ of $(A,B,C)$ and a differentiable function $(\mu, w ): O_{(a,b)}\rightarrow \mathbb{C}\times E$ such that, $\mathcal{N}(A',B',C',\mu(A',B',C'),w(A',B',C'))=0$ for all $(A',B',C')\in O_{(A,B,C)}$. Therefore, the ground eigenpair, $(\lambda(A',B',C'),u(A',B',C') )=(\lambda(A,B,C)+\mu(A',B',C'),u(A,B,C)+w(A',B',C'))$ is continuous in $O_{(A,B,C)}$. Repeating the argument above for all $(A, B, C)\in \mathcal{Q}$, it can be concluded that the ground eigenpair $(\lambda, u)$ depends on the coefficient functions $A$, $B$ and $C$ continuously in $\mathcal{Q}$.
\end{proof}

In the next lemma, it will be shown that the image of $H_{\bm{\gamma}}$ under the mapping to coefficient functions is contained in $\mathcal{Q}$.
\begin{lemma}
\label{LEVP_continuous}
Let $\mathcal{V}\subset \mathbb{C}^\mathbb{N}$  be defined by 
  \begin{align}
  \label{Vset}
  \mathcal{V} : =
    \left\{
    \bm{z}\in \mathbb{C}^\mathbb{N}
    :
    (A(\bm{z}),B(\bm{z}),C(\bm{z}))\in \mathcal{Q}
  \right\},
  \end{align}
where $\mathcal{Q}$ is defined as in \eqref{Q}. Then mappings $u:\mathcal{V}\rightarrow H_0^1$ and $\lambda:\mathcal{V}\rightarrow \mathbb{R}$ are jointly continuous. Furthermore, with $\bm{\gamma}$ defined in Assumption \ref{assumption1}, $H_{\bm{\gamma}}\subset \mathcal{V}$.
\end{lemma}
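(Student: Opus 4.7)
I would factor the map $\bm{z}\mapsto(\lambda(\bm{z}),u(\bm{z}))$ as the composition of the coefficient map $\Phi:\bm{z}\mapsto(A(\bm{z}),B(\bm{z}),C(\bm{z}))$ with the eigenpair map on $\mathcal{Q}$ provided by Lemma~\ref{LEVP1}. This reduces the lemma to two checks: (i) $\Phi(H_{\bm{\gamma}})\subset\mathcal{Q}$, which is exactly $H_{\bm{\gamma}}\subset\mathcal{V}$, and (ii) $\Phi$ is continuous from $H_{\bm{\gamma}}$, equipped with the subspace topology inherited from the product topology on $\mathbb{C}^{\mathbb{N}}$, into $(L^\infty)^3$ with its norm topology. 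Once both are in hand, composing with Lemma~\ref{LEVP1} gives joint continuity of $(\lambda,u)$ on $\mathcal{V}$.

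\textbf{Inclusion $H_{\bm{\gamma}}\subset\mathcal{V}$.} Given $\bm{z}\in H_{\bm{\gamma}}$, pick $\bm{y}\in U$ with $|z_j-y_j|\leq \varepsilon/(\gamma_j\beta_j)$ for every $j\geq 1$. Using $c_j=\beta_j/\zeta$, I would estimate
\begin{align*}
\|A(\bm{z})-A(\bm{y})\|_{L^\infty}\leq\sum_{j\geq 1}|z_j-y_j|\,c_j\leq\frac{\varepsilon}{\zeta}\sum_{j\geq 1}\frac{1}{\gamma_j}=\frac{\varepsilon\,\Gamma}{\zeta},
\end{align*}
and the identical bound for $B$ and $C$. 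Since $\varepsilon=\tfrac{1}{4}$ and $\Gamma<4\underline{D}$ from Assumption~\ref{assumption1}, the perturbation $\varepsilon\Gamma/\zeta$ is strictly below $\underline{D}=\min\{\underline{A},\underline{C}\}$, so that $\mathrm{Re}\,A(\bm{z})$ and $\mathrm{Re}\,C(\bm{z})$ are uniformly bounded below by a positive constant on $\Omega$. This positivity is precisely what permits the implicit function argument of Lemma~\ref{LEVP_separate} to be executed at the complex point $\bm{z}$ (anchored at the real parameter $\bm{y}$), producing a ground eigenpair and so placing $(A(\bm{z}),B(\bm{z}),C(\bm{z}))$ in $\mathcal{Q}$.

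\textbf{Continuity of $\Phi$ in the product topology.} Suppose $\bm{z}^n\to\bm{z}^0$ in the product topology on $H_{\bm{\gamma}}$, so that $z_j^n\to z_j^0$ for each fixed $j$. For $N\geq 1$ split
\begin{align*}
A(\bm{z}^n)-A(\bm{z}^0)=\sum_{j\leq N}(z_j^n-z_j^0)A_j+\sum_{j>N}(z_j^n-z_j^0)A_j.
\end{align*}
The tail is controlled by $|z_j^n-z_j^0|\leq 2(1+\varepsilon/(\gamma_j\beta_j))$ together with the summability of $\sum_j c_j$ and of $\sum_j c_j/(\gamma_j\beta_j)=\Gamma/\zeta$, both of which follow from $\bm{\gamma}\bm{\beta}\in\ell^p\subset\ell^1$ in Assumption~\ref{assumption1}; the finite head tends to zero by pointwise convergence of the coordinates. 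Hence $A(\bm{z}^n)\to A(\bm{z}^0)$ in $L^\infty$, and the same argument handles $B$ and $C$, giving continuity of $\Phi$. Composing with Lemma~\ref{LEVP1} completes the proof of joint continuity on $\mathcal{V}$.

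\textbf{Main obstacle.} The most delicate step is the product-topology continuity of $\Phi$: since the topology permits control of only finitely many coordinates at once, the tail of the infinite series defining $A(\bm{z}),B(\bm{z}),C(\bm{z})$ must be closed by combining the built-in coordinate bound $|z_j|\leq 1+\varepsilon/(\gamma_j\beta_j)$ on $H_{\bm{\gamma}}$ with the $\ell^1$ summability supplied by Assumption~\ref{assumption1}; it is precisely here that the conditions on $\bm{\gamma}$ in \eqref{Gamm} are exploited. A secondary subtlety is that Lemma~\ref{LEVP_separate} produces only a separately holomorphic extension near $U$, so reaching every point of $H_{\bm{\gamma}}$ requires invoking the positivity $\mathrm{Re}\,A(\bm{z}),\mathrm{Re}\,C(\bm{z})>0$ established above to keep the implicit function theorem applicable throughout $H_{\bm{\gamma}}$.
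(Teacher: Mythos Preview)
Your factorization through $\Phi$ and Lemma~\ref{LEVP1} is exactly the paper's strategy for joint continuity, and your explicit tail-splitting argument for the product-topology continuity of $\Phi$ is a clean replacement for the paper's one-line citation. That part is fine.

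The genuine gap is in the inclusion $H_{\bm{\gamma}}\subset\mathcal{V}$. Membership in $\mathcal{Q}$ requires not only the lower bounds on $\mathrm{Re}\,A$ and $\mathrm{Re}\,C$ but also the \emph{existence} of a ground eigenpair for the complex coefficients $(A(\bm{z}),B(\bm{z}),C(\bm{z}))$. Your perturbation estimate $\|A(\bm{z})-A(\bm{y})\|_{L^\infty}\leq\varepsilon\Gamma/\zeta$ handles positivity and uses only the hypothesis $\Gamma<4\underline{D}$, but you then assert that this positivity ``permits the implicit function argument of Lemma~\ref{LEVP_separate} to be executed at the complex point $\bm{z}$.'' That step is circular: Lemma~\ref{LEVP_separate} applies the implicit function theorem at a point where the eigenpair is already known to exist, producing only a \emph{local} neighborhood; it does not by itself reach $\bm{z}$, which may lie at distance $\varepsilon/(\gamma_j\beta_j)$ from $U$ in the $j$-th coordinate simultaneously for every $j$. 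Saying that positivity ``keeps the IFT applicable throughout $H_{\bm{\gamma}}$'' identifies the right invariant but does not supply the continuation argument---one still has to rule out blow-up of the eigenpair along the way.

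The paper fills this gap with a quantitative continuation: it collapses the radial perturbation to a single complex parameter $r$ by writing $\bm{y}_{\bm{r}}=\bm{y}+r(\varepsilon e^{i\theta_j}/(\kappa_j\beta_j))_{j\geq1}$ with $\sum_j 1/\kappa_j=1$, reruns the derivative bounds of Lemma~\ref{LEVP_bound} in the variable $r$, and reads off from the Taylor series that the radius of convergence in $r$ is $1$. This is precisely where the second clause $\Gamma<1$ in \eqref{Gamm} enters, and your argument never invokes it. Without that step (or an equivalent a~priori bound preventing blow-up), the existence portion of the inclusion is unproven.
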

\begin{proof}
Note that the mapping, $(\lambda, u): U\rightarrow \mathbb{C}\times H_0^1$, is a composition of two mappings $\bm{z}\mapsto (A(\bm{z}),B(\bm{z}),C(\bm{z}))$ and the continuous mapping $(A(\bm{z}),B(\bm{z}),C(\bm{z}))\mapsto (\lambda(\bm{z}),u(\bm{z}))$. Note that the mapping $(A,B,C)$ is jointly continuous in $\mathbb{C}^{\mathbb{N}}$, for example, from Example 1.2.2. in \cite{herve}.

 In order to conclude the proof, it is necessary to show that $\bm{\gamma}$ defined in \eqref{Gamm} is enough to make $H_{\bm{\gamma}} \subset \mathcal{V}$ where $\mathcal{V}$ is defined in \eqref{Vset}. In other words, $\bm{\gamma}$ should satisfies $(A(\bm{z}),B(\bm{z}),C(\bm{z}))\in \mathcal{Q}$ for all $\bm{z}\in H_{\bm{\gamma}}$. The sum $\Gamma$ being finite is needed for the condition $(A(\bm{z}),B(\bm{z}),C(\bm{z}))\in (L^\infty)^3$. For example, notating $\bm{z}\in H_{\bm{\gamma}}$ by $\bm{z}=\bm{y}+\bm{x}$ with $\bm{y}\in U$ and
  \begin{align}
 y_j=\argmin_{y\in [-1,1]} \left| y_j-z_j  \right|,
 \end{align}
for each $j\geq 1$, note that
 \begin{align}
 \left\| A(\bm{z}) \right\|_{\infty}
 &\leq \|A(\bm{y})  \|_\infty  +\sum_{j\geq 1} |x_j|\|A_j\|_{\infty}
 \notag\\
 &\leq \overline{A}  +\sum_{j\geq 1} c_j\left( \frac{\varepsilon}{\gamma_j b_j}  \right)
 \notag\\
 &\leq 
 \overline{A}  +\frac{\varepsilon}{\zeta}\sum_{j\geq 1} \left( \frac{1}{\gamma_j}  \right)<\infty.
 \end{align}

In the rest of the argument, the size of $\Gamma$ will be determined.   Let's set $\bm{\gamma}=(1/\Gamma)\bm{\kappa}$ with 
\begin{align}
\label{K}
K=\sum_{j\geq 1}\frac{1}{\kappa_j}=1,
\end{align}
for the convenience of the analysis. In the further analysis, the size of $\Gamma$ will be determined by the positivity condition for $A$ and $C$, and the existence of the ground eigenpairs with complex perturbation of coefficient functions.

First, as mentioned in \eqref{Q}, $\bm{\gamma}$ should satisfy $A(\bm{x})>\underline{A}$ and $C(\bm{x})\geq \underline{C}$ for all $\bm{z}\in H_{\bm{\gamma}}$ where $\bm{z}=\bm{x}+i\bm{y}$ with $\bm{x},\bm{y}\in \mathbb{R}^\mathbb{N}$.  A simple computation yields
\begin{align}
\label{underC}
C(\bm{x}) &= C_0 + \sum_{j\geq 1} x_j C_j
\notag \\
&\geq 
C_0 - \sum_{j\geq 1} |x_j| c_j
\notag \\
&\geq C_0 
-
\sum_{j\geq 1} 
\left(1+\frac{\varepsilon}{\gamma_j b_j}  \right) c_j
\notag \\
&\geq \underline{C} -\sum_{j\geq 1}\frac{\varepsilon}{\gamma_j \zeta}
\notag \\
&\geq \underline{D} -\sum_{j\geq 1}\frac{\varepsilon}{\gamma_j },
\end{align}
where $\underline{D}:= \min\{\underline{A},\underline{C} \}$. Same argument also shows $A(\bm{x})$ can be lower bounded with exactly same number as \eqref{underC}. For the desired condition, the lower bound above should be positive real number. In other words, for the positivity of $A$ and $C$, it is enough to have
\begin{align}
\label{r-bound}
\Gamma<\frac{\underline{D}}{\varepsilon}=4\underline{D},
\end{align}
which is clearly satisfied by $\bm{\gamma}$ in Assumption \ref{assumption2}.

Next, the number $\Gamma$ should be further specified for the existence of the ground eigenpair in complex perturbation. To be specific, the main concern here is the existence of the ground eigenpair with complex coefficient functions. For this, a variation of the problem \eqref{problem1} is considered. To be specific, in \eqref{problem1}, replace the parameter $\bm{y}$ with
\begin{align}
\bm{y}_{\bm{r}} := 
\bm{y}
+
 \left(\frac{\varepsilon}{\kappa_1 \beta_1}r_1, \frac{\varepsilon}{\kappa_2 \beta_2}r_2,\dots  \right),
\end{align}
for the variable $\bm{r}\in \mathbb{C}^\mathbb{N}$ with the restriction $|r_\ell|=|r_j|$ for all $\ell,j\geq 1$. For convenience, let $r_j=re^{i\theta_j}$ for $\theta_j\in \mathbb{R}$. Here, $\frac{\varepsilon}{\gamma_j \beta_j}$ is from the definition of $H_{\gamma}$ which is explained in the paragraph under \eqref{radi}. The sequence $\bm{\beta}$ is determined in Lemma \ref{LEVP_bound}. The largest possible $r$ that guarantees the existence of the ground eigenpair for $(A(\bm{y}_{\bm{r}}),B(\bm{y}_{\bm{r}}),C(\bm{y}_{\bm{r}}))$ will be the largest possible value of $\Gamma$.

With fixed $\bm{\kappa}$ in \eqref{K}, for fixed $\bm{y}\in U$, the problem \eqref{problem1} is now PDE with one parameter $r $ as follows:
\begin{align}
-\nabla\cdot [A(r)\nabla ]u(r) +B(r)u(r)=C(r)\lambda(r)u(r),
\end{align}
with
\begin{align}
A(r)&= A(\bm{y}) +r \left(\sum_{j\geq 1} \frac{\varepsilon}{\kappa_j \beta_j}e^{i\theta_j} A_j  \right),
\\
B(r)&= B(\bm{y}) +r \left(\sum_{j\geq 1} \frac{\varepsilon}{\kappa_j \beta_j}e^{i\theta_j} B_j  \right),
\\
C(r)&= C(\bm{y}) +r\left(\sum_{j\geq 1} \frac{\varepsilon}{\kappa_j \beta_j}e^{i\theta_j} C_j  \right).
\end{align}
Note that, $A(r),B(r),C(r)\in L^\infty$ because
\begin{align}
&\max\left\{ \left\| A(r)\right\|_{\infty},\left\| B(r)\right\|_{\infty},\left\| C(r)\right\|_{\infty}   \right\}\\
& \leq 
\max\left\{ \left\| A_0\right\|_{\infty},\left\| B_0\right\|_{\infty},\left\| C_0\right\|_{\infty}   \right\}+ r\sum_{j\geq 1} \frac{\varepsilon}{\kappa_j \beta_j} c_j
\\
&= \max\left\{ \left\| A_0\right\|_{\infty},\left\| B_0\right\|_{\infty},\left\| C_0\right\|_{\infty}   \right\}+ \frac{r\varepsilon}{\zeta}\sum_{j\geq 1} \frac{1}{\kappa_j} <\infty, 
\end{align}
for every $r>0$.
With this transformed problem, main interest is in finding the size of $\Gamma$ that guarantees the existence continuous ground eigenpair. A similar argument with Lemma \ref{LEVP_bound} yields
\begin{align}
\left| \partial_r^n \lambda(\bm{y}) \right| 
&\leq
\overline{\lambda} \alpha_n  g^n,
 \\
 \left\| \partial_r^n u(\bm{y}) \right\|_{H_0^1} 
 &\leq
  \overline{u} \alpha_n  g^n,
\end{align}
where $g:=  \zeta h $ with 
\begin{align}
h:=\frac{\varepsilon}{\zeta}\sum_{j\geq 1} \frac{1}{\kappa_j }=\frac{\varepsilon }{\zeta}.
\end{align}
Then, again with the Taylor expansion,
\begin{align}
u(r)
=
\sum_{n=0}^{\infty}
\frac{\partial_r^n u(0)}{n!}r^n,
\end{align}
the following estimate can be obtained:
\begin{align}
\|u(r)\|_{H_0^1}
\leq 
\sum_{n=0}^{\infty}
\frac{\overline{u} \alpha_n  g^n}{n!}|r|^n.
\end{align}
Now, the radius of the convergence is
\begin{align}
\label{r-bound2}
\lim_{n\rightarrow \infty}
\frac{\frac{\overline{u} \alpha_{n+1}  g^{n+1}}{(n+1)!}|r|^{n+1}}{\frac{\overline{u} \alpha_n  g^n}{n!}|r|^n} =\frac{g}{\varepsilon}|r|<1 \implies |r|<\frac{\varepsilon}{g}=1.
\end{align}
Thus, $r$ should be less than $1$ and it is clearly satisfied by the $\Gamma$ in Assumption \ref{assumption1}. Therefore, it can be concluded that $H_{\bm{\gamma}}\subset \mathcal{V}$. 
\end{proof}

From the lemmata above Corollary \ref{LEP} has been proven. To be specific, with the framework suggested at the end of Chapter 2, the first item is shown in Section \ref{linseparate}, the second item is shown in Section \ref{linbound} and the third item is shown in Section \ref{lincontinuity}.  Summing up, using Theorem \ref{main}, it has been shown that the ground eigenpair, $(u(\bm{y}),\lambda(\bm{y}))$, is $\left(\bm{b},\varepsilon\right)-$holomorphic with $\bm{b}=\bm{\gamma \beta}$ and $\varepsilon=\frac{1}{4}$ is from \eqref{vep}. It implies, as a corollary of Theorem \ref{impb}, the following holds:
\\

\begin{corollary}
\label{imp-bound123}
Suppose that $(\lambda(\bm{y}),u(\bm{y}))$ is the ground eigenpair of the problem \eqref{problem1} under Assumption \ref{assumption1}. Then there exists $M_u$ and $M_\lambda$ such that the following bounds hold:
\begin{align}
\label{LEVP-bound}
|\partial^{\bm{\nu}} \lambda (\bm{y})|\leq M_{\lambda} \bm{\nu}! \left(4{\bm{\gamma}\bm{\beta}}\right)^{\bm{\nu}},
\\
\|\partial^{\bm{\nu}} u (\bm{y})\|_{H_0^1}\leq M_{u}\bm{\nu}! \left(4{\bm{\gamma}\bm{\beta}}\right)^{\bm{\nu}},
\end{align}
for every $\bm{\nu}\in \mathcal{F}$ and every $\bm{y}\in U$.
\end{corollary}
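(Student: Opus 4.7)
The plan is to invoke Theorem \ref{impb} directly, once for $u$ and once for $\lambda$, with the data assembled in Lemmas \ref{LEVP_separate}--\ref{LEVP_continuous}. The point is that all of the hypotheses required by Theorem \ref{main} (which are the same as those of Theorem \ref{impb}) have already been verified en route to Corollary \ref{LEP}: Lemma \ref{LEVP_separate} supplies the separate holomorphic extension on some open neighborhood $\mathcal{O} \supset U$; Lemma \ref{LEVP_bound} provides the single-variable derivative bound \eqref{deri-bound} with the sequence $\bm{\alpha}$ from Lemma \ref{alpha_lemma}, which by \eqref{vep} satisfies $\lim_{n\to\infty} \alpha_n(n+1)/\alpha_{n+1} = 1/4$, so $\varepsilon = 1/4$; and Lemma \ref{LEVP_continuous} gives the joint continuous extension to $H_{\bm{\gamma}}$ for the $\bm{\gamma}$ fixed in Assumption \ref{assumption1}.

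The first step is to view the eigenvalue $\lambda : U \to \mathbb{C}$ as a mapping into the Banach space $\mathbb{C}$ and the ground state $u: U \to H_0^1$ into the complex Banach space $H_0^1$. Then Lemma \ref{LEVP_bound} with $\mathcal{X} = \mathbb{C}$ or $\mathcal{X} = H_0^1$ respectively, and with the common $\bm{\beta}$ and common sequence $\bm{\alpha}$ from Lemma \ref{alpha_lemma}, puts both components into the framework of Theorem \ref{impb}.

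The second step is to apply Theorem \ref{impb}, which yields, for every $\bm{\nu}\in \mathcal{F}$ and every $\bm{y}\in U$,
\begin{align}
\|\partial^{\bm{\nu}} u(\bm{y})\|_{H_0^1} \leq M_{\bm{\gamma}}^{u}\,\bm{\nu}!\,\left(\frac{\bm{\gamma}\bm{\beta}}{\varepsilon}\right)^{\bm{\nu}},
\qquad
|\partial^{\bm{\nu}}\lambda(\bm{y})| \leq M_{\bm{\gamma}}^{\lambda}\,\bm{\nu}!\,\left(\frac{\bm{\gamma}\bm{\beta}}{\varepsilon}\right)^{\bm{\nu}},
\end{align}
with $M_{\bm{\gamma}}^{u} := \sup_{\bm{z}\in H_{\bm{\gamma}}}\|u(\bm{z})\|_{H_0^1}$ and $M_{\bm{\gamma}}^{\lambda} := \sup_{\bm{z}\in H_{\bm{\gamma}}} |\lambda(\bm{z})|$; both suprema are finite by the compactness argument of Remark \ref{prodtop} together with the joint continuous extension from Lemma \ref{LEVP_continuous}.

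The third and final step is cosmetic: substituting $\varepsilon = 1/4$ gives $\bm{\gamma}\bm{\beta}/\varepsilon = 4\bm{\gamma}\bm{\beta}$, which yields exactly the bounds in the statement upon setting $M_u := M_{\bm{\gamma}}^{u}$ and $M_\lambda := M_{\bm{\gamma}}^{\lambda}$. I do not expect any real obstacle here, since the corollary is essentially a translation of Theorem \ref{impb} into the EVP notation; the substantive analytic work (separate holomorphy, single-variable bounds, compact joint continuity domain) is already packaged in Lemmas \ref{LEVP_separate}--\ref{LEVP_continuous}.
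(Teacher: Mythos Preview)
Your proposal is correct and matches the paper's approach: the paper derives Corollary \ref{imp-bound123} directly as a consequence of Theorem \ref{impb}, noting that the hypotheses of Theorem \ref{main} (hence of Theorem \ref{impb}) were verified by Lemmas \ref{LEVP_separate}--\ref{LEVP_continuous} with $\varepsilon=\tfrac{1}{4}$ from \eqref{vep}. Your three-step unpacking is just a more explicit version of the one-line deduction the paper gives before the corollary.
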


\subsection{Semilinear Eigenvalue Problem}

\label{semilinear} The following is the parametric semilinear eigenvalue problem of interest:
 \begin{align}
 \label{problem2}
 \begin{cases}
 -\nabla\cdot(A(x)\nabla )u(x,\bm{y})+{B(x,\bm{y})}u({x},\bm{y})+\eta u({x},\bm{y})^p=\lambda u({x},\bm{y}), &({x},\bm{y})\in \Omega\times U,\\
u({x},\bm{y}) =0, &({x},\bm{y})\in \partial \Omega \times U,
 \end{cases}
 \end{align}
 where $\Omega\subset \mathbb{R}^d$ is a bounded open domain with $C^2$ boundary and $\eta>0$ is a constant. When $A\equiv 1$, $\partial\Omega$ satisfying Lipschitz boundary is allowed. In this problem, the coefficient $A$ does not depend on the parameter $\bm{y}$. For this problem, the following condition will be assumed:\\

 \begin{assumption}
 \label{assumption2}
\begin{itemize}
\item There exists constants $\underline{A}>0$ such that $A(x)\geq \underline{A}$ for all $x\in \Omega$,

\item $B(x,\bm{y})$ has the following form:
 \begin{align}
B(x,\bm{y})=B_0+\sum_{j\geq 1}y_j B_j(x),
 \end{align}
where $ (B_j)_{j\geq 0}\subset L^\infty$,

\item The pair $(d,p)$ belongs to $\mathcal{A}$, where 
\begin{align}
\label{dp}
\mathcal{A}&:=\left\{ (d,p)\in \mathbb{N}\times \mathbb{N}: (d,p)\in [1,2]\times [1,\infty)\text{ or } p\in \left[1, \frac{d}{d-2}  \right] \right\},
\\
\label{dp2}
\mathcal{A}'&:=\left\{ (d,p)\in \mathbb{N}\times \mathbb{N}: (d,p)\in [1,2]\times [1,\infty)\text{ or } p\in \left[1, \frac{2}{d-2}  \right] \right\}.
\end{align}

\item There exists $\bm{\gamma}\in (1,\infty)^\mathbb{N}$ such that  
\begin{align}
\label{gammasemi}
\Gamma : = \sum_{j\geq 1} \frac{1}{\gamma_j}<1,
\end{align}
where $\beta_j:=\rho\|B\|_{L^\infty}$ and $\rho> 1$ is determined as in Lemma \ref{semi2}.

\end{itemize}
\end{assumption}
This EVP has received great attention from physics. In particular, when $p=3$, this problem recovers Gross-Pitaevskii equation which is known to describe super-fluidity and super-conductivity; for example, see \cite{gpe, gpe2}. The parametric version of this problem was recently studied in \cite{Bahn2024}, and, in the paper, an estimation for the upper bound for the mixed derivatives that is equivalent with \cite{alexey} is obtained. The analytic dependence of the ground eigenpair near $U$ is also shown in \cite{Bahn2024} with the use of implicit function theorem to support the meaning of taking any finitely high order mixed derivatives. In this section, the result from \cite{Bahn2024} will be directly used to show $(\bm{b},\varepsilon)$-holomorphy of the ground eigenpair of this problem.
\\

\begin{corollary}
\label{SLEP}
Consider the ground eigenpair $(u(\bm{y}),\lambda(\bm{y}))$ of the semilinear eigenvalue problem \eqref{problem2} under Assumption \ref{assumption2}.  Then the eigenpair is $(\bm{ b},\varepsilon)-$holomorphic with $\bm{b}=\bm{\gamma \beta}$, $\beta_j:=\rho \|B_j\|_{L^\infty}$ for some constant $\rho>1$ and with $\varepsilon=1$.
\end{corollary}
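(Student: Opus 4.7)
The plan is to verify the three hypotheses of Theorem \ref{main} for the ground eigenpair $(u,\lambda)$ of \eqref{problem2}, following the same three-step template that produced Corollary \ref{LEP} in Sections \ref{linseparate}--\ref{lincontinuity}, but leveraging the analytic and derivative bounds already established in \cite{Bahn2024} for the semilinear case. The only real parametric coefficient is $B(\bm{y})$, which simplifies several computations compared to the linear problem.

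First, for separate holomorphy, I would fix $\bm{y}\in U$ and $j\geq 1$ and apply the Banach-space implicit function theorem to the nonlinear operator
\[
\mathcal{N}(\xi,\mu,w):=\bigl(-\nabla\cdot(A\nabla)+B(\bm{y}+\xi\bm{e}_j)-(\lambda(\bm{y})+\mu)\bigr)(u(\bm{y})+w)+\eta(u(\bm{y})+w)^p,
\]
defined on a complex neighborhood of the origin in $\mathbb{C}\times\mathbb{C}\times\mathrm{span}_{H_0^1}\{u(\bm{y})\}^\perp$. Its Gâteaux derivative in $(\mu,w)$ at the origin is $\bigl(-\nabla\cdot(A\nabla)+B(\bm{y})-\lambda(\bm{y})+p\eta u(\bm{y})^{p-1}\bigr)w-\mu u(\bm{y})$, whose invertibility in the appropriate complement splitting (using simplicity of the ground eigenvalue and the Sobolev regime $(d,p)\in\mathcal{A}$) was verified in \cite{Bahn2024}. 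This yields a separately holomorphic extension of $(\lambda,u)$ on an open set $\prod_{j\geq 1}\mathcal{O}_j\supset U$.

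Second, for the single-variable derivative bounds, I would directly invoke the mixed-derivative estimate from \cite{Bahn2024}, of the form $\|\partial^{\bm{\nu}}u(\bm{y})\|_{H_0^1}\leq C(|\bm{\nu}|!)\bm{\beta}^{\bm{\nu}}$ and an analogous bound for $|\partial^{\bm{\nu}}\lambda(\bm{y})|$, where $\beta_j=\rho\|B_j\|_{L^\infty}$ for some $\rho>1$. Specializing $\bm{\nu}=n\bm{e}_j$ gives $\alpha_n=C\,n!$, and a routine computation
\[
\lim_{n\to\infty}\frac{\alpha_n(n+1)}{\alpha_{n+1}}=\lim_{n\to\infty}\frac{n!(n+1)}{(n+1)!}=1
\]
identifies the value $\varepsilon=1$ claimed in the statement.

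Third and most delicate, I would verify joint continuity on $H_{\bm{\gamma}}$ by mimicking Lemmas \ref{LEVP1} and \ref{LEVP_continuous}. Define a set $\mathcal{Q}\subset L^\infty$ of admissible complex $B$ on which a ground eigenpair exists, and use the implicit function theorem (exactly as in Lemma \ref{LEVP1}, replacing the trilinear coefficient structure with the single-coefficient structure here and incorporating the $p\eta u^{p-1}$ term) to prove continuous dependence of $(\lambda,u)$ on $B\in\mathcal{Q}$. Setting $\mathcal{V}:=\{\bm{z}\in\mathbb{C}^\mathbb{N}:B(\bm{z})\in\mathcal{Q}\}$ and composing with the jointly continuous map $\bm{z}\mapsto B(\bm{z})$, I would then establish $H_{\bm{\gamma}}\subset\mathcal{V}$ in two steps: (a) for any $\bm{z}\in H_{\bm{\gamma}}$,
\[
\|B(\bm{z})\|_{L^\infty}\leq \|B(\bm{y})\|_{L^\infty}+\sum_{j\geq 1}\frac{\varepsilon}{\gamma_j\beta_j}\|B_j\|_{L^\infty}\leq \overline{B}+\frac{\varepsilon}{\rho}\Gamma<\infty
\]
for the projected $\bm{y}\in U$; and (b) the ground eigenpair persists throughout $H_{\bm{\gamma}}$, which I would reduce to a one-complex-parameter radius-of-convergence argument exactly as in the final paragraphs of Lemma \ref{LEVP_continuous}. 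Writing $B(r)=B(\bm{y})+r\sum_{j\geq 1}(\varepsilon/(\gamma_j\beta_j))e^{i\theta_j}B_j$, applying the single-variable bound of step two to this one-parameter family yields convergence for $|r|<\varepsilon/g=1$, and this critical radius is precisely the condition $\Gamma<1$ in \eqref{gammasemi}.

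The main obstacle is step three: unlike the linear case where the eigenvalue comparison from Kato's perturbation theory is classical, the semilinear ground eigenpair under complex coefficient perturbation does not have a variational characterization, so its persistence must be tracked analytically. The one-parameter radius-of-convergence trick circumvents this by transferring the existence question to the already-established derivative bounds, but it requires us to know in advance that step two applies along the complex ray, which is exactly the hypothesis the implicit function theorem of step one supplies on an open neighborhood; an explicit compactness argument on $H_{\bm{\gamma}}$ may be needed to close the loop, and this is where I expect the bulk of the technical care.
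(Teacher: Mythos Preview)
Your proposal is correct and follows essentially the same three-step template the paper uses: the paper's Lemmas \ref{SEVP2}, \ref{semi2}, and \ref{SEVP1} cite \cite{Bahn2024} for exactly the separate holomorphy, derivative bounds, and continuous-dependence-on-$B$ results you describe, and the final lemma establishes $H_{\bm{\gamma}}\subset\mathcal{V}$ via the same one-complex-parameter radius-of-convergence reduction you outline (the paper parametrizes with $\bm{\gamma}=(1/\Gamma)\bm{\kappa}$ and $\sum_j 1/\kappa_j=1$, but this is cosmetic). Your identification of the obstacle in step three and its resolution via the derivative bounds of step two is precisely the paper's argument.
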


The proof of Corollary \ref{SLEP} consists of the following lemmata. First separately holomorphic property is stated.
\\

\begin{lemma}
\label{SEVP2}
The ground eigenpair $(u(\bm{y}),\lambda(\bm{y})):U\rightarrow H_0^1\times \mathbb{R}$ of \eqref{problem2} has separately holomorphic extension onto $\mathcal{O}$ where $U\subset \mathcal{O}\subset \mathbb{C}^\mathbb{N}$ with $\mathcal{O}:=\prod_{j\geq 1}\mathcal{O}_j $ where  $\mathcal{O}_j\subset \mathbb{C}$ is an open set for all $j\geq 1$.
\end{lemma}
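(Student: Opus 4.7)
The strategy parallels Lemma \ref{LEVP_separate}: for each fixed $\bm{y}\in U$ and each $j\geq 1$, I will apply the Banach space implicit function theorem to a perturbation map built from \eqref{problem2}, with the perturbation parameter $\xi\in\mathbb{C}$ entering through $B(\bm{y}+\xi\bm{e}_j)$. Concretely, with $E:=\mathrm{span}_{H_0^1}\{u(\bm{y})\}^{\perp}$, I would define $\mathcal{N}:\mathbb{C}\times\mathbb{C}\times E\to H^{-1}$ by
\begin{align*}
\mathcal{N}(\xi,\mu,w):=-\nabla\cdot(A\nabla)(u(\bm{y})+w)+B(\bm{y}+\xi\bm{e}_j)(u(\bm{y})+w)+\eta(u(\bm{y})+w)^p-(\lambda(\bm{y})+\mu)(u(\bm{y})+w),
\end{align*}
so that $\mathcal{N}(0,0,0)=0$ since $(u(\bm{y}),\lambda(\bm{y}))$ is the ground eigenpair.

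Next I would compute the Gateaux derivative with respect to $(\mu,w)$ at $(0,0,0)$, obtaining the linearized operator
\begin{align*}
D_{(\mu,w)}\mathcal{N}(0,0,0)(\mu,w)=\bigl(-\nabla\cdot(A\nabla)+B(\bm{y})+\eta p\, u(\bm{y})^{p-1}-\lambda(\bm{y})\bigr)w-\mu\, u(\bm{y}).
\end{align*}
The condition $(d,p)\in\mathcal{A}$ in Assumption \ref{assumption2}, together with the Sobolev embedding $H_0^1\hookrightarrow L^{2p}$, ensures that the multiplication by $u(\bm{y})^{p-1}$ acts continuously on $H_0^1$ into $H^{-1}$, so that the expression above defines a bounded linear operator $\mathcal{D}:\mathbb{C}\times E\to H^{-1}$.

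The heart of the argument is showing that $\mathcal{D}$ is a Banach-space isomorphism. I would invoke the non-degeneracy of the ground state of \eqref{problem2}, established in \cite{Bahn2024}: the restriction $\mathcal{L}:=-\nabla\cdot(A\nabla)+B(\bm{y})+\eta p\, u(\bm{y})^{p-1}-\lambda(\bm{y})$ to $E$ is a bijection from $E$ onto $E':=\mathrm{span}_{H^{-1}}\{u(\bm{y})\}^{\perp}$ with bounded inverse, because $u(\bm{y})$ spans the kernel of $\mathcal{L}$ on $H_0^1$. Since the term $\mu\,u(\bm{y})$ spans the complementary one-dimensional subspace $(E')^{\perp}$, surjectivity of $\mathcal{D}$ onto $H^{-1}=E'\oplus(E')^{\perp}$ and injectivity both follow, and boundedness of $\mathcal{D}^{-1}$ follows from the bounded inverse of $\mathcal{L}$, as in the inequality chain \eqref{inversebound} of Lemma \ref{LEVP1}. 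This non-degeneracy step is the main obstacle, but it is furnished by \cite{Bahn2024}.

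Finally, since $\mathcal{N}$ is Fr\'echet differentiable (the nonlinear term $(u+w)^p$ is a polynomial map, hence holomorphic in $w$ under the assumed Sobolev embeddings) and $\mathcal{D}$ is an isomorphism, the Banach-space implicit function theorem yields an open neighborhood $\mathcal{O}_j\subset\mathbb{C}$ of $0$ and holomorphic functions $\mu:\mathcal{O}_j\to\mathbb{C}$, $w:\mathcal{O}_j\to E$ with $\mathcal{N}(\xi,\mu(\xi),w(\xi))=0$. Setting $u(\bm{y}+\xi\bm{e}_j):=u(\bm{y})+w(\xi)$ and $\lambda(\bm{y}+\xi\bm{e}_j):=\lambda(\bm{y})+\mu(\xi)$ gives a holomorphic extension in the $j$-th coordinate. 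Taking the product $\mathcal{O}:=\prod_{j\geq 1}\mathcal{O}_j$ yields the desired separately holomorphic extension.
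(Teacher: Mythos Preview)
The paper's own proof is a single sentence: it defers entirely to Theorem~3.1 of \cite{Bahn2024}. Your sketch is therefore more explicit than what the paper offers and amounts to a reconstruction of the implicit-function-theorem argument that \cite{Bahn2024} carries out, following the template of Lemma~\ref{LEVP_separate}. In that sense your route and the paper's are compatible.

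There is, however, one concrete error in your isomorphism step. You write that ``$u(\bm{y})$ spans the kernel of $\mathcal{L}$ on $H_0^1$'', with $\mathcal{L}=-\nabla\cdot(A\nabla)+B(\bm{y})+\eta p\,u(\bm{y})^{p-1}-\lambda(\bm{y})$. This is false for $p>1$: applying $\mathcal{L}$ to $u(\bm{y})$ and using \eqref{problem2} gives
\[
\mathcal{L}\,u(\bm{y})=\eta(p-1)\,u(\bm{y})^{p}\neq 0.
\]
So, unlike the linear case, the ground state is \emph{not} annihilated by the linearized operator, and the spectral-decomposition reasoning of Lemma~\ref{LEVP1} (where $\mathcal{L}'u_n=(\lambda_n-\lambda)Cu_n$) does not carry over verbatim; in particular your claim that $\mathcal{L}$ maps $E$ into $E'=\mathrm{span}_{H^{-1}}\{u(\bm{y})\}^{\perp}$ is not justified by that argument. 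The correct non-degeneracy input---that $\mathcal{D}:\mathbb{C}\times E\to H^{-1}$ is an isomorphism---does hold, but it needs a genuinely semilinear argument, which is precisely what \cite{Bahn2024} supplies. Since you already flag this step as the ``main obstacle'' and defer it to \cite{Bahn2024}, the overall strategy survives; just drop the incorrect kernel justification and cite \cite{Bahn2024} for the isomorphism directly.
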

\begin{proof}
The existence of the holomorphic extension of the ground eigenpair onto $\mathcal{O}\subset \mathbb{C}^\mathbb{N}$ has been shown in Theorem 3.1 in \cite{Bahn2024}.
\end{proof}

In the next lemma, the bound of the derivative is stated.

 \begin{lemma}
 \label{semi2}
 Let $(u(\bm{y}),\lambda(\bm{y}))$ be the ground eigenpair of \eqref{problem2}. Then, for any $j\geq 1$, any $n\geq 1$ and any $\bm{y}\in U$, the following bound for the derivatives holds:
 \begin{align}
 \|\partial_j^n u(\bm{y}) \|_{H_0^1}
 &\leq 
 \overline{u} (n!) \beta_j^n
 \\
  |\partial_j^n \lambda(\bm{y}) |
 &\leq 
 \overline{\lambda} (n!) \beta_j^n
 \end{align}
 where $\bm{\beta}=\rho \|B_j\|_{L^\infty}$ with $\rho>1$ is some large enough constant.
 \end{lemma}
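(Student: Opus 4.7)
The plan is to reduce Lemma \ref{semi2} to the mixed-derivative bound already established in \cite{Bahn2024} and then specialize to a single coordinate direction. Under Assumption \ref{assumption2}, the analysis in \cite{Bahn2024} yields, for the ground eigenpair of \eqref{problem2}, an inequality of the form $\|\partial^{\bm{\nu}} u(\bm{y})\|_{H_0^1} \leq C_u(|\bm{\nu}|!)\,\tilde{\bm{\beta}}^{\bm{\nu}}$ and $|\partial^{\bm{\nu}}\lambda(\bm{y})| \leq C_\lambda(|\bm{\nu}|!)\,\tilde{\bm{\beta}}^{\bm{\nu}}$ for every $\bm{\nu}\in\mathcal{F}$ and every $\bm{y}\in U$, where $\tilde{\beta}_j$ is proportional to $\|B_j\|_{L^\infty}$. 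Taking $\bm{\nu}=n\bm{e}_j$ one has $|\bm{\nu}|!=n!$ and $\tilde{\bm{\beta}}^{\bm{\nu}}=\tilde{\beta}_j^{\,n}$, so after absorbing the proportionality constant into $\rho$ the stated bounds hold with $\beta_j=\rho\|B_j\|_{L^\infty}$, $\overline{u}:=\sup_{\bm{y}\in U}\|u(\bm{y})\|_{H_0^1}$ and $\overline{\lambda}:=\sup_{\bm{y}\in U}|\lambda(\bm{y})|$, both finite by continuity of the eigenpair on the compact set $U$.

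If instead one preferred a self-contained induction in the spirit of Lemma \ref{LEVP_bound}, the plan would be to differentiate the weak form of \eqref{problem2} $n$ times in $y_j$. Because $A$ and $\eta$ are parameter-free and $B(\bm{y})$ is affine with single Fréchet slope $B_j$, the Leibniz rule collapses most terms, and the nonlinear contribution becomes
\begin{align*}
\partial_j^n(u^p)=\sum_{n_1+\cdots+n_p=n}\binom{n}{n_1,\dots,n_p}\prod_{k=1}^p \partial_j^{n_k}u.
\end{align*}
Projecting the differentiated equation onto $\mathrm{span}_{H_0^1}\{u\}^\perp$ and exploiting coercivity of the linearized operator on that complement then yields a recursion of the schematic form
\begin{align*}
\|\partial_j^n u\|_{H_0^1} \leq \tilde{C}\,\Bigl( n\|\partial_j^{n-1}u\|_{H_0^1}\|B_j\|_{L^\infty} + \sum_{m=1}^{n-1}\binom{n}{m}\bigl(|\partial_j^{n-m}\lambda|+\mathrm{n.l.\ terms}\bigr)\|\partial_j^{m}u\|_{H_0^1}\Bigr),
\end{align*}
and similarly for $|\partial_j^n\lambda|$. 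Substituting the inductive hypothesis and using the Cauchy-product identity $\sum_{m=0}^{n}\binom{n}{m}m!(n-m)!=(n+1)!$ would close the induction provided $\rho$ is chosen large enough to dominate all universal constants appearing in the recursion.

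The main obstacle in the direct route is handling the nonlinearity $\eta u^p$: the multinomial expansion of $\partial_j^n(u^p)$ must be estimated in the $H^{-1}$ norm, which requires a Sobolev / Gagliardo--Nirenberg embedding whose admissible exponents are exactly what dictates the restriction $(d,p)\in\mathcal{A}'$ in Assumption \ref{assumption2}. Since this combinatorial-analytic core is precisely the technical content of \cite{Bahn2024}, the cleanest and most economical route is to invoke that result as a black box and specialize to $\bm{\nu}=n\bm{e}_j$, which is the strategy I would adopt.
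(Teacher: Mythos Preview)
Your proposal is correct and follows essentially the same approach as the paper: the paper's proof simply invokes Theorem 3.2 of \cite{Bahn2024} to obtain the mixed-derivative bounds $\|\partial^{\bm{\nu}}u(\bm{y})\|_{H_0^1}\leq \overline{u}(|\bm{\nu}|!)\bm{\beta}^{\bm{\nu}}$ and $|\partial^{\bm{\nu}}\lambda(\bm{y})|\leq \overline{\lambda}(|\bm{\nu}|!)\bm{\beta}^{\bm{\nu}}$, and then specializes to $\bm{\nu}=n\bm{e}_j$. Your additional sketch of a direct single-variable induction is not needed, but it is accurate in spirit and correctly identifies that the nonlinear term is the technical crux handled in \cite{Bahn2024}.
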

\begin{proof}
In Theorem 3.2 of \cite{Bahn2024}, it is shown that, for $\bm{\nu}\in \mathcal{F}$ and for every $\bm{y}\in U$, the following bounds hold:
\begin{align}
\left\|  \partial^{\bm{\nu}}u(\bm{y}) \right\|_{H_0^1} &\leq \overline{u} (|\bm{\nu}|!)\bm{\beta^\nu},
\\
| \partial^{\bm{\nu}}\lambda (\bm{y})  |&\leq \overline{\lambda}(|\bm{\nu}|!) \bm{\beta^\nu}.
\end{align}
With $\bm{\nu}$ such that $\nu_j=n$ and $\nu_i=0$ for all $i\neq j$, the notations are simplified as $|\bm{\nu}|=n$ and $\bm{\beta}^{\bm{\nu}}=\beta_j^n$ which is the desired result.
\end{proof}

Additionally, note that the sequence $\bm{\alpha}$ in Theorem \ref{main} in this case is define by  $\alpha_n=n!$, and so
\begin{align}
\varepsilon=\lim_{n\rightarrow \infty}\frac{\alpha_n(n+1)}{\alpha_{n+1}}=\lim_{n\rightarrow \infty}=\frac{(n+1)!}{(n+1)!}=1.
\end{align}

In the next lemma, the continuous dependence of the ground eigenpair on the potential coefficient function $B$ is stated.

\begin{lemma}
\label{SEVP1}
For given $A\in L^\infty$ satisfying the condition in Assumption \ref{assumption2} and given $\eta >0$, consider a set $\mathcal{Q}'\subset L^\infty$ defined by
\begin{align}
\label{Q'}
\mathcal{Q}':=\{B\in L^\infty: \exists (\lambda,u)\in \mathbb{C} \times H_0^1 \text{ }  \text{ satisfying  \eqref{nonp}
}  
\}.
\end{align} 
Then the ground eigenpair $(\lambda,u)\in \mathbb{C}\times H_0^1$ of the following non-parametric problem:
\begin{align}
\label{nonpsemi}
\begin{cases}
-\nabla \cdot (A(x)\nabla)u(x)+B(x)u(x)+\eta (u(x))^3=\lambda u(x), &x\in \Omega,\\
u(x)=0, &x\in \partial \Omega,
\end{cases}
\end{align}
continuously depends on $B$ in $ \mathcal{Q}'$.
\end{lemma}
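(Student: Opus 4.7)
The plan is to mirror the argument of Lemma \ref{LEVP1}, adapting it to the semilinear setting via the Banach space implicit function theorem. Fix $B \in \mathcal{Q}'$ with the corresponding ground eigenpair $(\lambda, u)$ of \eqref{nonpsemi}. Set $E := span_{H_0^1}\{u\}^{\perp}$ and define the nonlinear map $\mathcal{N}: \mathcal{Q}' \times \mathbb{C} \times E \rightarrow H^{-1}$ by
\begin{align}
\mathcal{N}(B', \mu, w) := -\nabla\cdot(A \nabla)(u+w) + (B+B')(u+w) + \eta (u+w)^{p} - (\lambda+\mu)(u+w),
\end{align}
so that $\mathcal{N}(0,0,0) = 0$ by construction. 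The goal is to show $\mathcal{N}$ is Fr\'echet differentiable in a neighborhood of the origin and that its partial derivative in $(\mu, w)$ is a Banach space isomorphism onto $H^{-1}$, after which the implicit function theorem produces a continuous dependence $B' \mapsto (\mu(B'), w(B'))$ and hence a continuous map $B' \mapsto (\lambda + \mu(B'), u + w(B'))$. Iterating over $B \in \mathcal{Q}'$ yields the claim.

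First, I would compute the Gateaux derivative at the origin:
\begin{align}
D_{(\mu, w)} \mathcal{N}(0,0,0)(\mu, w) = \mathcal{L}' w - \mu u, \qquad \mathcal{L}' := -\nabla\cdot(A\nabla) + B + \eta p u^{p-1} - \lambda,
\end{align}
using the pointwise embedding bounds for $u$ coming from the admissibility range $(d,p) \in \mathcal{A}$ to justify the expansion of $(u+hw)^p$ in $h$ inside $H^{-1}$. The nonlinear contribution, absent in the linear case, is the additive potential $\eta p u^{p-1}$ appearing inside $\mathcal{L}'$; that this term is in a suitable Lebesgue space (so that multiplication $w \mapsto \eta p u^{p-1} w$ is continuous from $E \subset H_0^1$ into $H^{-1}$) is again ensured by the exponent restriction \eqref{dp}.

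The main obstacle is to verify that $\mathcal{D} := D_{(\mu, w)} \mathcal{N}(0,0,0) : \mathbb{C} \times E \rightarrow H^{-1}$ is bijective with a bounded inverse. Since the ground state $u$ is the unique (up to sign) minimizer of the associated Rayleigh-type energy, the linearized operator $\mathcal{L}'$ restricted to $E = span_{H_0^1}\{u\}^{\perp}$ is coercive; this non-degeneracy of the ground eigenpair is exactly the content used in Theorem 3.1 of \cite{Bahn2024} and supplies a bounded inverse $(\mathcal{L}')^{-1}: E' \rightarrow E$, where $E' = span_{H^{-1}}\{u\}^{\perp}$. Combined with the rank-one contribution $\mu u$ spanning $(E')^{\perp}$ in $H^{-1}$, one obtains surjectivity of $\mathcal{D}$; injectivity follows because $u \notin \mathcal{L}'(E)$, so that the decomposition into an $E'$-component and a multiple of $u$ is unique.

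Having established that $\mathcal{D}$ is an isomorphism, a bound of the form
\begin{align}
\|\mathcal{D}^{-1} \psi\|_{\mathbb{C} \times H_0^1}^{2} \leq R \|\psi\|_{H^{-1}}^{2}
\end{align}
follows verbatim from the estimate \eqref{inversebound} in the proof of Lemma \ref{LEVP1}, using the orthogonality of $\mathcal{L}' w$ and $u$ in $H^{-1}$. Continuity (in fact Fr\'echet differentiability) of $\mathcal{N}$ in $B'$ is immediate from the affine dependence, and continuity in $(\mu, w)$ follows from the Nemytskii-type estimates on $(u + w)^p$ available under \eqref{dp}. The implicit function theorem then yields a neighborhood $O_B \subset \mathcal{Q}'$ on which the ground eigenpair depends continuously on the potential, and since $B$ was arbitrary in $\mathcal{Q}'$ the lemma follows.
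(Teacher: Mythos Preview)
Your proposal is correct and follows the natural implicit function theorem strategy. The paper itself does not give an argument here at all: its proof of this lemma consists of a single sentence referring to Lemma~3.3 of \cite{Bahn2024}, so there is no in-paper proof to compare against. What you have written is essentially a reconstruction of that cited result, carried out in the same spirit as Lemma~\ref{LEVP1}: linearize at the ground eigenpair, identify the Fr\'echet derivative as $\mathcal{L}'w-\mu u$ with $\mathcal{L}'=-\nabla\cdot(A\nabla)+B+\eta p u^{p-1}-\lambda$, and use non-degeneracy of the ground state on $E=\mathrm{span}\{u\}^\perp$ to invert. The only places where genuine work beyond Lemma~\ref{LEVP1} is hidden are (i) that $w\mapsto \eta p u^{p-1}w$ is bounded $H_0^1\to H^{-1}$, and (ii) that $\mathcal{L}'|_E$ is coercive; you correctly flag both and attribute them to the exponent restriction $(d,p)\in\mathcal{A}$ and to the minimality/simplicity of the ground state, which is exactly how \cite{Bahn2024} handles them.
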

\begin{proof}
The proof of this lemma is found in Lemma 3.3 in \cite{Bahn2024}. 
\end{proof}

Lastly, it will be shown that the sequence $\bm{\gamma}$ chosen in Assumption \ref{assumption2} is enough to ensure the continuity of the ground eigenpair in $H_{\bm{\gamma}}$.

\begin{lemma}
Let $\mathcal{V}\subset \mathbb{C}^\mathbb{N}$  be defined by 
  \begin{align}
  \label{Vsetsemi}
  \mathcal{V} : =
    \left\{
    \bm{z}\in \mathbb{C}^\mathbb{N}
    :
    B(\bm{z}) \in \mathcal{Q}'
  \right\},
  \end{align}
where $\mathcal{Q}'$ is defined as in \eqref{Q'}. Then mappings $u:\mathcal{V}\rightarrow H_0^1$ and $\lambda:\mathcal{V}\rightarrow \mathbb{R}$ are jointly continuous. Furthermore, with $\bm{\gamma}$ defined in Assumption \ref{assumption2}, $H_{\bm{\gamma}}\subset \mathcal{V}$ which implies the jointly continuity of the ground eigenpair on $H_{\bm{\gamma}}$.
\end{lemma}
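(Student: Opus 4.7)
The plan is to follow closely the scheme of Lemma~\ref{LEVP_continuous}, adapted to the semilinear setting in which only the potential $B$ carries parametric dependence. The continuity assertion factors through the coefficient function: the affine map $\bm{z}\mapsto B(\bm{z})$ is jointly continuous from $(\mathbb{C}^\mathbb{N},\text{product topology})$ into $L^\infty$ by the standard argument used in Example 1.2.2 of \cite{herve}, and Lemma~\ref{SEVP1} gives continuity of $(\lambda,u)$ as a function of $B\in\mathcal{Q}'$. Composing the two yields joint continuity of $(\lambda,u):\mathcal{V}\to\mathbb{C}\times H_0^1$. The substantive task is the inclusion $H_{\bm{\gamma}}\subset\mathcal{V}$, i.e.\ to verify that for every $\bm{z}\in H_{\bm{\gamma}}$ the coefficient $B(\bm{z})$ lies in $L^\infty$ and the non-parametric problem \eqref{nonpsemi} with this coefficient admits a ground eigenpair.

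For the $L^\infty$-membership I would write $\bm{z}=\bm{y}+\bm{x}$ with $y_j=\argmin_{y\in[-1,1]}|z_j-y|$ and $|x_j|<\varepsilon/(\gamma_j\beta_j)=1/(\gamma_j\rho c_j)$, where $c_j=\|B_j\|_{L^\infty}$ and $\varepsilon=1$ in this setting. A direct triangle-inequality estimate then gives
\begin{align*}
\|B(\bm{z})\|_{L^\infty}\leq \|B(\bm{y})\|_{L^\infty}+\sum_{j\geq 1}|x_j|c_j\leq \overline{B}+\frac{1}{\rho}\sum_{j\geq 1}\frac{1}{\gamma_j}=\overline{B}+\frac{\Gamma}{\rho}<\infty,
\end{align*}
which is finite by \eqref{gammasemi}.

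For the existence of the ground eigenpair in the complex regime, I would borrow the one-parameter reduction used in the last part of Lemma~\ref{LEVP_continuous}. Set $\bm{\gamma}=\bm{\kappa}/\Gamma$ with $\sum_j 1/\kappa_j=1$; for each fixed $\bm{y}\in U$ and real phases $\theta_j$, parametrize a one-complex-dimensional slice through $H_{\bm{\gamma}}$ by
\begin{align*}
B(r):=B(\bm{y})+r\sum_{j\geq 1}\frac{1}{\kappa_j\beta_j}e^{i\theta_j}B_j,\qquad r\in\mathbb{C}.
\end{align*}
Rerunning the inductive derivative-bound argument that underlies Theorem~3.2 of \cite{Bahn2024} for this single-parameter problem produces $\|\partial_r^n u(0)\|_{H_0^1}\leq \overline{u}\,n!\,g^n$ and $|\partial_r^n\lambda(0)|\leq \overline{\lambda}\,n!\,g^n$, where the effective constant is bounded by $g\leq \rho\sum_{j\geq 1}c_j/(\kappa_j\beta_j)=\sum_{j\geq 1}1/\kappa_j=1$. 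Hence the Taylor expansions in $r$ converge on $|r|<\varepsilon/g=1$, and the scaling $\bm{\gamma}=\bm{\kappa}/\Gamma$ places every point of $H_{\bm{\gamma}}$ inside the disk $|r|<\Gamma<1$, so the ground eigenpair extends across this slice and $B(\bm{z})\in\mathcal{Q}'$. Ranging over $\bm{y}$ and the phases $(\theta_j)$ exhausts $H_{\bm{\gamma}}$.

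The main obstacle is justifying that the derivative bound of Theorem~3.2 in \cite{Bahn2024}, stated there for real parameters, carries over to the single complex parameter $r$. The implicit-function-theorem setup of Lemma~\ref{SEVP2} is algebraic and goes through verbatim once the spectral gap used to invert the linearized operator at $r=0$ is seen to persist under sufficiently small complex perturbations of $B$, a persistence which itself reduces to the continuity assertion of Lemma~\ref{SEVP1}. Granted this, the induction yields the $n!$-type bound above, the radius-of-convergence computation closes the containment $H_{\bm{\gamma}}\subset\mathcal{V}$, and the factorization of the first paragraph delivers the stated joint continuity on $H_{\bm{\gamma}}$.
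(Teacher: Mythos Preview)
Your proposal is correct and follows essentially the same route as the paper: the composition argument (continuity of $\bm{z}\mapsto B(\bm{z})$ plus Lemma~\ref{SEVP1}) for joint continuity, and the one-parameter radial slice $B(r)$ together with the $n!$-type derivative bound from \cite{Bahn2024} to obtain radius of convergence $|r|<1$, yielding $H_{\bm{\gamma}}\subset\mathcal{V}$ under $\Gamma<1$. You are in fact slightly more careful than the paper, adding the explicit $L^\infty$-bound on $B(\bm{z})$ and flagging the passage of the inductive bound to the complex parameter, which the paper handles only by reference.
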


\begin{proof}

As mentioned in Lemma \ref{LEVP_continuous},  the coefficient function is jointly continuous in $\mathbb{C}^{\mathbb{N}}$. Also, from the previous lemma, the eigenpair, $(u,\lambda):\mathcal{Q}'\rightarrow H_0^1\times \mathbb{C} $ is continuous. Therefore, it is left to show that the sequence $\bm{\gamma}$ in Assumption \ref{assumption2} is enough to have $H_{\bm{\gamma}}\subset \mathcal{V}$.

First of all, as it has been done in linear case, for fixed $\bm{y}\in U$ with $\bm{\gamma}:=(1/\Gamma)\bm{\kappa}$, define
\begin{align}
B(r):=B(\bm{y})+\left( \sum_{j\geq 1} \frac{\varepsilon}{\kappa_j \beta_j}r_j B_j \right),
\end{align} 
for complex variable $\bm{r}\in \mathbb{C}^\mathbb{N}$. Also, following the notation from the linear case, let $r_j=r e^{i\theta_j}$ for all $j\geq 1$. In other words, the main focus here is to find how large $|r|$ could be to ensure that the ground eigenpair exists with the corresponding coefficient function $B(r):=B(\bm{z})$ for
\begin{align}
\bm{z}=\bm{y}+r \left( \frac{\varepsilon}{\kappa_1 \beta_1}e^{i\theta_1},\frac{\varepsilon}{\kappa_2 \beta_2}e^{i\theta_2},\dots  \right).
\end{align}
The largest possible $r$ will be the largest possible $\Gamma$. With same argument made in the proof of Lemma \ref{semi2}, it is possible to show that
\begin{align}
\|\partial^n u(r)\|_{H_0^1} \leq \overline{\lambda} n! (\rho g)^n
\end{align}
where
\begin{align}
g:=\frac{\varepsilon}{\rho} =
 \frac{\varepsilon}{\rho}\sum_{j\geq 1}\frac{1}{\kappa_j}
 \geq 
  \sum_{j\geq 1} \frac{\varepsilon}{\kappa_j \beta_j} \|B_j\|_{L^\infty}
  \geq 
\left\| \sum_{j\geq 1} \frac{\varepsilon}{\kappa_j \beta_j} B_j\right\|_{L^\infty} .
\end{align}

Then, the radius of the converge of the series
\begin{align}
u(r) = \sum_{n\geq 0} \frac{\partial^n u(r) }{n!}r^n
\end{align}
with some computations,
\begin{align}
\|u(r)\|_{H_0^1} \leq  \sum_{n\geq 0} \frac{n!(\rho g)^n}{n!}|r|^n
=
\sum_{n\geq 0}
(\rho g)^{n}|r|^n
,
\end{align}
is
\begin{align}
\lim_{n\rightarrow}\frac{(\rho g)^{n+1}|r|^{n+1}}{(\rho g)^{n}|r|^n}
=
(\rho g)|r|<1 
\implies
  |r|<\frac{1}{\rho g}. 
\end{align}

It implies that
\begin{align}
|r|<\frac{1}{\rho } \frac{\rho }{\varepsilon }=1\implies \Gamma <1.
\end{align}

This condition is true thanks to \eqref{gammasemi}. Therefore, the ground eigenpair exists in $H_{\bm{\gamma}}$ with any $\bm{\gamma}$ satisfying \eqref{gammasemi}.

\end{proof}

Finally, as a corollary of Thoerem \ref{impb}, by Lemma \ref{SEVP2}, Lemma \ref{semi2} and Lemma \ref{SEVP1},  the following bound for mixed derivatives hold:
\\

\begin{corollary}
\label{mixed2}
Suppose $(\lambda(\bm{y}),u(\bm{y}))$ is the ground eigenpair of the problem \eqref{problem2} under Assumption \ref{assumption2}. Then there exists $M_u$ and $M_\lambda$ such that the following bounds hold:
\begin{align}
|\partial^{\bm{\nu}} \lambda (\bm{y})|\leq M_{\lambda} \bm{\nu}! ( \bm{\gamma \beta})^{\bm{\nu}},
\\
\|\partial^{\bm{\nu}} u (\bm{y})\|_{H_0^1}\leq M_{u} \bm{\nu}! ( \bm{\gamma \beta})^{\bm{\nu}}
\end{align}
for every  $\bm{\nu}\in \mathcal{F}$ and every $\bm{y}\in U$, where the sequence $\bm{\gamma \beta}$ is defined as in Assumption \ref{assumption2}.
\end{corollary}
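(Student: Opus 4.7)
The plan is to obtain Corollary \ref{mixed2} as a direct application of Theorem \ref{impb}, so the work consists entirely of certifying that each hypothesis of Theorem \ref{main} (on which Theorem \ref{impb} depends) has been verified for the two mappings $u: U \to H_0^1$ and $\lambda: U \to \mathbb{C}$ arising from the semilinear EVP \eqref{problem2}.

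First I would collect the three ingredients already established in this subsection. Separately holomorphic extension of the ground eigenpair onto a product open set containing $U$ is exactly Lemma \ref{SEVP2}. The single-variable derivative bounds $\|\partial_j^n u(\bm{y})\|_{H_0^1} \leq \overline{u}\, n!\, \beta_j^n$ and $|\partial_j^n \lambda(\bm{y})| \leq \overline{\lambda}\, n!\, \beta_j^n$ of Lemma \ref{semi2} supply the hypothesis \eqref{deri-bound} with $\alpha_n = n!$ and $\beta_j = \rho \|B_j\|_{L^\infty}$. A direct calculation, already recorded in the discussion following Lemma \ref{semi2}, shows
\begin{align*}
\varepsilon = \lim_{n \to \infty} \frac{\alpha_n(n+1)}{\alpha_{n+1}} = \lim_{n \to \infty} \frac{n!(n+1)}{(n+1)!} = 1,
\end{align*}
so condition \eqref{radius} of Theorem \ref{main} holds with $\varepsilon = 1$.

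Second, the joint continuity requirement on $H_{\bm{\gamma}}$ has also been assembled: Lemma \ref{SEVP1} supplies continuity of the ground eigenpair with respect to the coefficient $B$ on $\mathcal{Q}'$, and the final lemma before the corollary shows that the choice of $\bm{\gamma}$ in Assumption \ref{assumption2} ensures the inclusion $H_{\bm{\gamma}} \subset \mathcal{V}$, so $(u,\lambda)$ extends jointly continuously to $H_{\bm{\gamma}}$. The decreasing-summability condition $\bm{\gamma\beta} \in \ell^p$ is also already part of Assumption \ref{assumption2}.

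With every hypothesis of Theorem \ref{main} verified, I would invoke Theorem \ref{impb} separately with $\mathcal{X} = H_0^1$ for $u$ and with $\mathcal{X} = \mathbb{C}$ for $\lambda$. Substituting $\varepsilon = 1$ into \eqref{imp-bound} yields immediately
\begin{align*}
\|\partial^{\bm{\nu}} u(\bm{y})\|_{H_0^1} \leq M_u\, \bm{\nu}!\, (\bm{\gamma\beta})^{\bm{\nu}}, \qquad |\partial^{\bm{\nu}} \lambda(\bm{y})| \leq M_\lambda\, \bm{\nu}!\, (\bm{\gamma\beta})^{\bm{\nu}},
\end{align*}
where $M_u$ and $M_\lambda$ are the respective $M_{\bm{\gamma}}$ constants from \eqref{Mbound} for the two Banach spaces. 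Because every ingredient has been independently established earlier in the section, no genuine obstacle remains; the only point warranting care is keeping track of the constant $\varepsilon$ computed from the sequence $\bm{\alpha} = (n!)_{n\geq 0}$, which is what makes the exponential factor in \eqref{imp-bound} collapse to the clean form $(\bm{\gamma\beta})^{\bm{\nu}}$ advertised in the corollary.
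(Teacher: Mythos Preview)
Your proposal is correct and follows essentially the same route as the paper: the paper also obtains Corollary \ref{mixed2} directly from Theorem \ref{impb} by citing Lemma \ref{SEVP2}, Lemma \ref{semi2}, Lemma \ref{SEVP1}, and the subsequent lemma establishing $H_{\bm{\gamma}}\subset\mathcal{V}$, with $\varepsilon=1$ computed from $\alpha_n=n!$. Your write-up simply makes explicit the verification steps that the paper leaves to a one-line reference.
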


\section{Discussion on quasi-Monte Carlo methods}

 In this section, a short discussion on the application of the mixed derivatives obtained in Corollary \ref{imp-bound123} and Corollary \ref{mixed2} to quasi-Monte Carlo methods(QMC). In this application, the parametric PDEs studied in the previous sections are considered as stochastic PDEs by considering each parameter $y_j$ as a uniform random variable over $\left[-\frac{1}{2},\frac{1}{2} \right]$ for each $j\geq 1$. Thus, the parameter space of interest is changed to $\left[-\frac{1}{2},\frac{1}{2} \right]^\mathbb{N}$ from $[-1,1]^\mathbb{N}$. This this perspective, still denoting the random eigenpair  by $(u(\bm{y}),\lambda(\bm{y}))$, the quantities of our interest will be their expected values:
\begin{align}
\mathbb{E}_{\bm{y}}\left[ \lambda(\bm{y})  \right]=\int_{\left[-\frac{1}{2},\frac{1}{2} \right]^{\mathbb{N}}} \lambda(\bm{y})d\bm{y}=\lim_{s\rightarrow \infty} \int_{\left[-\frac{1}{2},\frac{1}{2} \right]^s} \lambda(y_1,y_2,\dots, y_s,0,\dots)dy_1 dy_2\cdots dy_s,
\end{align}
and
\begin{align}
\mathbb{E}_{\bm{y}}\left[ \mathcal{G}(u(\bm{y}))  \right]=\int_{\left[-\frac{1}{2},\frac{1}{2} \right]^{\mathbb{N}}} \mathcal{G}(u(\bm{y}))d\bm{y}=\lim_{s\rightarrow \infty} \int_{\left[-\frac{1}{2},\frac{1}{2} \right]^s} \mathcal{G}(u(y_1,y_2,\dots, y_s,0,\dots))dy_1 dy_2\cdots dy_s.
\end{align}
for any $\mathcal{G}\in H^{-1}$. QMC method is a variant of Monte Carlo method for faster convergence by enforcing random samples to be uniform throughout the space preventing rare situation such as all uniform random samples are found in a local region. Also, this method is useful for random number generators that are not well verified to be uniform in high dimension. For example, recall that, in original Monte Carlo method, the expectation of a function $f:\Omega\rightarrow \mathbb{R}$ with respect to a given  probability measure $\mathbb{P}$ is estimated by 
\begin{align}
\int_\Omega f(X)d\mathbb{P}(X) = \frac{1}{N}\sum_{i=1}^{N} f(X_i),
\end{align}
where $\Omega \subset \mathbb{R}^s$,  for a positive integer $s$, is a compact subset and $X_i \sim \mathbb{P}$ are iid samples. 

 In QMC, the random samples are generated near the uniformly spread lattice points.  \textit{CBC generated randomly shifted rank 1 lattice rule} is constructed with a generating vector $\bm{z}\in \mathbb{Z}^s$ and with  a random shift $\bm{\Delta}\sim Unif([0,1]^s)$. To be specific, with the defined notations, the ramdom samples are $\left( \left\{\frac{k\bm{z}}{N} +\bm{\Delta} \right\}-\bm{\frac{1}{2}} \right)_{k=0}^{N-1}$ where $\{\}$ notation means taking the fraction part of it, i.e., $\{\frac{3}{2}\}=\frac{1}{2}$, and $\bm{\frac{1}{2}}=\left(\frac{1}{2},\dots,\frac{1}{2} \right)\in \mathbb{R}^s$. Thus, after the dimension truncation, the quantity
 \begin{align}
 \mathbb{E}_{\bm{y}_s}[f_s(\bm{y_s})] : = \int_{\left[-\frac{1}{2},\frac{1}{2} \right]^s} f(\bm{y}_s)d\bm{y}_s,
 \end{align}
is estimated by
\begin{align}
Q_{N,s} f := \frac{1}{N}\sum_{k=0}^{N-1}f\left( \left\{\frac{k\bm{z}}{N}+\bm{\Delta}  \right\}-\bm{\frac{1}{2}} \right).
\end{align}
 In this QMC rule, $\bm{z}$ is constructed using the Fast component-by-component(CBC) algorithm. detailed explanation of this method can be found in \cite{quasi}.
\\

 Recently in \cite{Bahn2024}, combining the results from Theorem 4.2 in \cite{alex2019} and Theorem 6.4 in \cite{sch1} the following general lemma is stated.

\begin{lemma}[Lemma 4.3 in \cite{Bahn2024}]
\label{gtotal}
Let $f: \left[-\frac{1}{2},\frac{1}{2} \right]^\mathbb{N}\rightarrow \mathcal{X}$ be given where $\mathcal{X}$ is a given normed vector space with norm $\|\cdot\|_\mathcal{X}$. Suppose that $f$ is analytic for any of its restrictions to a finite-dimensional domain and that $\|\partial^{\bm{\nu}}f(\bm{y})\|_\mathcal{X}\leq C(|\bm{\nu}|!)\bm{b}^{\bm{\nu}}$ for some constant $C>0$, for some $\varepsilon\in [0,1)$ and for some decreasing sequence $\bm{b}\in \ell^q$.   Then, for sufficiently large $s\in \mathbb{N}$ and for any prime number $N\in \mathbb{N}$, there is a generating vector $\bm{z}\in \mathbb{N}^s$ such that, for any $\mathcal{G}\in X^*$ , 
\begin{align}
\sqrt{
\mathbb{E}_{\bm{\Delta}}
\left[
\left|
\mathbb{E}_{\bm{y}}[\mathcal{G}(f)]-Q_{N,s}\mathcal{G}(f_s)
\right|^2
\right]
}
\leq 
C_\alpha \left( T(s)+N^{-\alpha} \right),
\end{align}
where  
\begin{align}
T(s)=
\begin{cases}
s^{-\frac{2}{q}+1}, &q\in (0,1),\\
\left(\sum_{j>s} b_j \right)^2, &q=1,
\end{cases}
\end{align}
and
\begin{align}
\alpha=
\begin{cases}
1-\delta, &q\in \left(0,\frac{2}{3} \right],\\
\frac{1}{q}-\frac{1}{2}, &q\in \left( \frac{2}{3},1 \right],
\end{cases}
\end{align}
for arbitrary $\delta\in \left(0,\frac{1}{2} \right)$ and for some $s$ independent constants $C_{\alpha}>0$. When $q=1$, we additionally assume that $\sum_{j\geq 1}b_j<\sqrt{6}$.
\end{lemma}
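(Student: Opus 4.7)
The plan is to decompose the total error into a deterministic dimension-truncation piece and a randomized cubature piece, then invoke the two cited ingredients (Theorem 4.2 of \cite{alex2019} and Theorem 6.4 of \cite{sch1}) to bound each. Writing $f_s(\bm{y}):=f(y_1,\dots,y_s,0,0,\dots)$, I would split
\begin{align*}
\mathbb{E}_{\bm{y}}[\mathcal{G}(f)]-Q_{N,s}\mathcal{G}(f_s)
=\bigl(\mathbb{E}_{\bm{y}}[\mathcal{G}(f)]-\mathbb{E}_{\bm{y}_s}[\mathcal{G}(f_s)]\bigr)+\bigl(\mathbb{E}_{\bm{y}_s}[\mathcal{G}(f_s)]-Q_{N,s}\mathcal{G}(f_s)\bigr).
\end{align*}
The first summand is deterministic (independent of $\bm{\Delta}$), so after applying $\sqrt{\mathbb{E}_{\bm{\Delta}}|\cdot|^2}$ and Minkowski's inequality the mean-square error splits into a pure truncation bound plus the root-mean-square QMC error.

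For the truncation term, I would apply $\mathcal{G}$ to the vector-valued bound $\|\partial^{\bm{\nu}}f(\bm{y})\|_{\mathcal{X}}\leq C(|\bm{\nu}|!)\bm{b}^{\bm{\nu}}$ to obtain the scalar estimate $|\partial^{\bm{\nu}}(\mathcal{G}\circ f)(\bm{y})|\leq C\|\mathcal{G}\|_{\mathcal{X}^*}(|\bm{\nu}|!)\bm{b}^{\bm{\nu}}$ uniformly in $\bm{y}$. With $\bm{b}\in \ell^q$ decreasing, Theorem 4.2 of \cite{alex2019} applies directly and produces a constant $C'>0$ such that
\begin{align*}
\bigl|\mathbb{E}_{\bm{y}}[\mathcal{G}(f)]-\mathbb{E}_{\bm{y}_s}[\mathcal{G}(f_s)]\bigr|\leq C'\,T(s),
\end{align*}
with the advertised case split: $T(s)=s^{-2/q+1}$ for $q\in(0,1)$ and $T(s)=\bigl(\sum_{j>s}b_j\bigr)^2$ for $q=1$.

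For the cubature term, with $s$ now fixed, the same mixed-derivative bound on $\mathcal{G}\circ f_s$ places the integrand in a weighted Sobolev space of dominating mixed smoothness, with product-and-order-dependent (POD) weights calibrated from the factors $(|\bm{\nu}|!)\,\bm{b}^{\bm{\nu}}$. Theorem 6.4 of \cite{sch1} then produces, via the fast CBC construction at prime $N$, a generating vector $\bm{z}\in\mathbb{N}^s$ whose randomly shifted rank-1 lattice rule satisfies
\begin{align*}
\sqrt{\mathbb{E}_{\bm{\Delta}}\bigl|\mathbb{E}_{\bm{y}_s}[\mathcal{G}(f_s)]-Q_{N,s}\mathcal{G}(f_s)\bigr|^{2}}\leq C_{\alpha}\,N^{-\alpha},
\end{align*}
with $\alpha=1-\delta$ when $q\in(0,2/3]$ and $\alpha=1/q-1/2$ when $q\in(2/3,1]$. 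Combining with the truncation bound via Minkowski finishes the proof, absorbing $C'$ into $C_\alpha$.

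The main obstacle is showing that the constant $C_\alpha$ is genuinely independent of $s$, which reduces to controlling the POD-weighted norm
\begin{align*}
\sum_{\bm{\nu}\in\mathcal{F},\,\mathrm{supp}(\bm{\nu})\subset\{1,\dots,s\}}(|\bm{\nu}|!)^{2}\bm{b}^{2\bm{\nu}}\,/\,\gamma_{\bm{\nu}}
\end{align*}
uniformly in $s$ under an optimal weight choice. For $q<1$ the sum converges comfortably, but at the endpoint $q=1$ the CBC worst-case analysis requires summability of a series whose ratio test produces exactly the threshold $\sum_{j\geq 1}b_j<\sqrt{6}$, which is why this hypothesis is added in that regime. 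Verifying that the resulting constants match \cite{sch1} and that the weight optimisation indeed yields the stated exponents $\alpha$ is the most delicate, though entirely standard, part of the argument.
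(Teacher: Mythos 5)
Your proposal is correct and takes essentially the same approach as the paper: the paper does not reprove this lemma but cites it from \cite{Bahn2024}, which obtains it exactly by the split you describe, bounding the dimension-truncation term via Theorem 4.2 of \cite{alex2019} and the randomly shifted lattice cubature term via Theorem 6.4 of \cite{sch1}, then combining through the triangle inequality.
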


With this lemma and Corollary \ref{imp-bound123} and corollary \ref{mixed2}, the following corollary provides the application to QMC : 
\begin{corollary}
Suppose $(u,\lambda)$ is either the ground eigenpair of \eqref{problem1} under Assumption \ref{assumption1} or the ground eigenpair of \eqref{problem2} under Assumption \ref{assumption2}.
Further suppose that  $\left(  \gamma_j\|b_j\|_{L^\infty}  \right)_{j\geq 1}\in \ell^q$ for some $q\in (0,1]$,  $N\in \mathbb{N}$ is a prime and $\mathcal{G}\in H^{-1}$. Then a lattice rule generating vector $\bm{z}\in \mathbb{N}^s$ can be constructed by CBC algorithm such that the root-mean-square errors of  the CBC-generated randomly shifted rank 1 lattice rule estimation of $\mathbb{E}_{\bm{y}}[\lambda]$ and $\mathbb{E}_{\bm{y}}[\mathcal{G}(u)]$ for any $\mathcal{G}\in H^{-1}$ satisfies
\begin{align}
\sqrt{\mathbb{E}_{\Delta}\left[
\left| \mathbb{E}_{\bm{y}}[\lambda]-Q_{N,s}\lambda_s
\right|^2  \right]}\leq C_{1,\alpha}\left(T(s)+ N^{-\alpha}\right),
\end{align}
and
\begin{align}
\sqrt{\mathbb{E}_{\Delta}
\left[ \left| 
\mathbb{E}_{\bm{y}}[\mathcal{G}(u)]-Q_{N,s}\mathcal{G}(u_s) \right|^2
\right]
}\leq C_{2,\alpha}\left(T(s)+ N^{-\alpha}\right),
\end{align}
where $T(s)$ is defined in Lemma \ref{gtotal} and
\begin{align}
\alpha=
\begin{cases}
1-\delta, &q\in \left(0,\frac{2}{3} \right],\\
\frac{1}{q}-\frac{1}{2}, &q\in \left( \frac{2}{3},1 \right],
\end{cases}
\end{align}
for arbitrary $\delta\in \left(0,\frac{1}{2} \right)$ and for some $s$ independent constants $C_{1,\alpha},C_{2,\alpha}>0$.
\end{corollary}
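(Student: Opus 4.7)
The plan is to invoke Lemma \ref{gtotal} separately for $\lambda$ and for $u$, treating the linear and semilinear cases uniformly. The work reduces to checking that both eigenpairs satisfy the hypotheses of that lemma with the sequence $\bm{b}$ taken to be (a constant multiple of) $\bm{\gamma}\bm{\beta}$.

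First I would collect the mixed-derivative bounds I already have. In the linear case Corollary \ref{imp-bound123} supplies
\begin{align*}
|\partial^{\bm{\nu}}\lambda(\bm{y})| \leq M_\lambda\,\bm{\nu}!\,(4\bm{\gamma}\bm{\beta})^{\bm{\nu}}, \qquad
\|\partial^{\bm{\nu}}u(\bm{y})\|_{H_0^1} \leq M_u\,\bm{\nu}!\,(4\bm{\gamma}\bm{\beta})^{\bm{\nu}},
\end{align*}
and Corollary \ref{mixed2} gives the analogous bounds with $\bm{\gamma}\bm{\beta}$ in place of $4\bm{\gamma}\bm{\beta}$ for the semilinear case. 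Since the multinomial coefficient $|\bm{\nu}|!/\bm{\nu}!$ is a positive integer, one has $\bm{\nu}! \leq |\bm{\nu}|!$, so both estimates can be weakened to the form $C\,(|\bm{\nu}|!)\,\widetilde{\bm{b}}^{\bm{\nu}}$ required by Lemma \ref{gtotal}, where $\widetilde{\bm{b}} := 4\bm{\gamma}\bm{\beta}$ or $\bm{\gamma}\bm{\beta}$ according to the case. The $\ell^q$-summability assumed in the hypothesis of the corollary is preserved under multiplication by a constant, so $\widetilde{\bm{b}}\in \ell^q$.

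Next I would verify the analyticity hypothesis of Lemma \ref{gtotal}. The separate holomorphy established in Lemma \ref{LEVP_separate} (linear) and Lemma \ref{SEVP2} (semilinear), combined with Hartogs' theorem (exactly as invoked in the proof of Theorem \ref{impb}), gives joint analyticity of every finite-dimensional restriction of $\lambda$ and of $u$, which is what the lemma requires. Then I would apply Lemma \ref{gtotal} twice: once with $\mathcal{X} = \mathbb{C}$, $f = \lambda$, and $\mathcal{G}$ the identity functional, to obtain the root-mean-square bound for $\lambda$; and once with $\mathcal{X} = H_0^1$ and $f = u$, using the given $\mathcal{G} \in H^{-1} = (H_0^1)^{*}$, to obtain the bound for $\mathcal{G}(u)$. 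The lemma furnishes a CBC-constructed generating vector $\bm{z}\in\mathbb{N}^s$ and the stated dependence of $T(s)$ and $\alpha$ on $q$.

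The main technical point to handle is that a single generating vector $\bm{z}$ must serve for both the $\lambda$-estimate and the $u$-estimate simultaneously, which is easily arranged because the CBC construction depends only on an upper bound for $\widetilde{\bm{b}}$; applying the construction with the larger of the two sequences (here $4\bm{\gamma}\bm{\beta}$ in the linear case, $\bm{\gamma}\bm{\beta}$ in the semilinear case) majorizes both. The boundary condition $\sum_{j\geq 1} \widetilde{b}_j < \sqrt{6}$ needed in the $q=1$ subcase of Lemma \ref{gtotal} deserves a brief remark; since $\bm{\gamma}\bm{\beta}\in \ell^1$ is assumed to be a decreasing summable sequence, one may rescale by shrinking $\varepsilon$ or truncating finitely many leading coordinates into the constant $C_{i,\alpha}$ to satisfy it, so this is essentially a bookkeeping issue rather than a genuine obstacle. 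Beyond that small verification, the argument is a direct citation of Lemma \ref{gtotal} with the ingredients already assembled in the previous sections.
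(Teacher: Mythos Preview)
Your proposal is correct and follows essentially the same approach as the paper: invoke Lemma \ref{gtotal} with the mixed-derivative bounds from Corollary \ref{imp-bound123} and Corollary \ref{mixed2}, noting that $\bm{\gamma}\bm{\beta}\in\ell^q$. The paper's own proof is a single sentence that does exactly this without spelling out the intermediate observations (such as $\bm{\nu}!\leq |\bm{\nu}|!$, the analyticity check via Hartogs, or the $q=1$ summability caveat) that you take the trouble to make explicit.
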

\begin{proof}
With Lemma \ref{gtotal}, it is enough to note that with $\bm{b}=\bm{\gamma b}\in \ell^{q}$ with $q\in (0,1]$ assumed in Assumption \ref{assumption1} or Assumption \ref{assumption2} and the bound obtained in Corollary \ref{imp-bound123} and Corollary \ref{mixed2} satisfy the desired bound.
\end{proof}

\section{Acknowledgement} The author thanks professor Yulong Lu (University of Minnesota Twin Cities) for suggesting this problem. The author also thanks professor Christoph Schwab (ETH Z\"uich), and professor Juli\'an L\'opez-Gomez (Complutense University of Madrid) for all the helpful discussions.

\section{Data Availability Statement}
Data sharing not applicable to this article as no datasets were generated or analysed during the current study.

\end{document}